\documentclass{article}
\usepackage[utf8]{inputenc}
\usepackage{amsthm}
\usepackage{amsmath}
\usepackage{amsfonts}
\usepackage{hyperref}
\usepackage{xcolor}
\hypersetup{
    colorlinks,
    linkcolor={red!80!black},
    citecolor={blue!80!black},
    urlcolor={blue!80!black}
}
\usepackage{float}
\usepackage[margin=1in]{geometry}
\usepackage{graphicx}
\usepackage{subcaption}
\usepackage{caption}
\usepackage{url}
\usepackage{tikz}

\DeclareMathOperator{\sech}{sech}
\DeclareMathOperator{\csch}{csch}

\newcommand{\ut}{\tilde{u}}
\newcommand{\vt}{\tilde{v}}
\newcommand{\wt}{\tilde{w}}

\newcommand{\ctau}{\alpha}
\newcommand{\ukdv}{\eta}

\newcommand{\tIcex}{\pmb{\tilde{c}}}
\newcommand{\tIAex}{\tilde{A}}
\newcommand{\tIbex}{\pmb{\tilde{b}}}
\newcommand{\tIcim}{\pmb{c}}
\newcommand{\tIAim}{A}
\newcommand{\tIbim}{\pmb{b}}

\newcommand{\tIIcex}{\pmb{\hat{\tilde{c}}}}
\newcommand{\tIIAex}{\hat{\tilde{A}}}
\newcommand{\tIIaex}{\pmb{\hat{\tilde{a}}}}
\newcommand{\tIIbexf}{\hat{\tilde{b}}_1}
\newcommand{\tIIbex}{\pmb{\hat{\tilde{b}}}}
\newcommand{\tIIcim}{\pmb{\hat{c}}}
\newcommand{\tIIAim}{\hat{A}}
\newcommand{\tIIaim}{\pmb{\hat{a}}}
\newcommand{\tIIbim}{\pmb{\hat{b}}}
\newcommand{\tIIbimf}{\hat{b}_1}

\newtheorem{thm}{Theorem}

\newtheorem{remark}{Remark}
\newtheorem{defn}{Definition}
\newtheorem{example}{Example}

\title{Traveling-wave solutions and structure-preserving numerical methods for
    a hyperbolic approximation of the Korteweg-de Vries equation}

\author{Abhijit Biswas\footnote{Corresponding author. Authors listed alphabetically.} \and David I. Ketcheson \and Hendrik Ranocha \and Jochen Sch\"utz}
\date{}

\begin{document}
\maketitle

\abstract{We study the recently-proposed hyperbolic approximation of the Korteweg-de Vries equation (KdV).
 We show that this approximation, which we call KdVH, possesses a rich variety of
 solutions, including solitary wave solutions that approximate KdV solitons, as well as other
 solitary and periodic solutions that are related to higher-order water wave models,
 and may include singularities.
 We analyze a class of implicit-explicit Runge-Kutta time discretizations for KdVH
 that are asymptotic preserving, energy conserving, and can be applied to other hyperbolized
 systems. We also develop structure-preserving spatial discretizations based on summation-by-parts
 operators in space including finite difference, discontinuous Galerkin, and Fourier methods. We use the
entropy relaxation approach to make the fully discrete schemes energy-preserving.
 Numerical experiments demonstrate the effectiveness of these discretizations.}

\section{Introduction}
The Korteweg-de Vries  equation (KdV)
\begin{align} \label{kdv}
    \partial_t \ukdv + \ukdv \partial_x \ukdv + \partial^3_x \ukdv = 0
\end{align}
is a widely studied model for water waves and perhaps the simplest known
nonlinear integrable partial differential equation (PDE) \cite{yang2010nonlinear}.
A hyperbolic approximation of KdV, referred to herein as KdVH,
has been proposed recently \cite{besse2022perfectly}:
\begin{subequations}\label{kdvH}
\begin{align}
\partial_t u + u\partial_x u + \partial_x w &= 0 \;, \label{kdvH-a} \\
\tau \partial_t v &= (\partial_x v - w) \;, \label{kdvH-b} \\
\tau \partial_t w &= -(\partial_x u - v) \;.\label{kdvH-c}
\end{align}
\end{subequations}
Here $\tau>0$ is referred to as the relaxation parameter.  When $\tau \to 0$
(referred to as the \emph{relaxation limit}),
formally one obtains $v \to u_x$ and $w \to u_{xx}$ so that \eqref{kdvH-a} becomes
equivalent to \eqref{kdv} with $u \to \ukdv$.

Similar hyperbolic approximations have been proposed for a number of other dispersive
nonlinear wave equations
\cite{antuono2009dispersive,grosso2010dispersive,mazaheri2016first,favrie2017rapid,
dhaouadi2019extended,escalante2019efficient,chesnokov2019hyperbolic,bassi2020hyperbolic,
gavrilyuk2022hyperbolic,chesnokov2023strongly}
as well as other classes of PDEs
\cite{toro2014advection,ruter2018hyperbolic,li2018new,dhaouadi2022hyperbolic,dhaouadi2022first,gavrilyuk2024conduit}.
A general approach to such approximations has been given in \cite{ketcheson2024approximation}.
The KdVH system \eqref{kdvH} was proposed with the idea of facilitating
the implementation of accurate nonreflecting boundary conditions in an approximation of
the KdV equation \cite{besse2022perfectly}.  In that work, the authors also briefly analyzed the
dispersion relation, hyperbolic structure, and one class of traveling wave solutions.
They presented numerical solutions obtained with 2nd-order operator splitting in
time and a 2nd-order MUSCL scheme in space.

In this work we further study the structure and numerical discretization of
the KdVH system \eqref{kdvH}.
In Section \ref{sec:traveling-waves} we perform a more complete study of traveling wave
solutions of KdVH.
The KdV equation admits soliton solutions of arbitrary large height and
speed.  Nevertheless, such waves beyond a certain height are known to be unphysical
\cite{brun2018convective}.  The KdVH system imposes a maximum speed
on such soliton-like waves and therefore avoids, at least qualitatively,
this non-physical aspect of the KdV model (see further discussion in Section \ref{sec:traveling-waves}).
We will also see that the KdVH system has traveling waves that are related to solutions
of the Camassa-Holm system \cite{camassa1994new}, in which solutions are known
to propagate at finite speed in a certain sense \cite{constantin2005finite}.

Given the many special properties of KdV \eqref{kdv}, it is natural to ask whether
or to what extent they are preserved by \eqref{kdvH}.  It is also desirable to
develop numerical discretizations of \eqref{kdvH} that preserve this structure
at the discrete level.  In Section \ref{sec:disc}, we develop high-order
accurate time and space discretizations that are asymptotic preserving and that
exactly preserve a quadratic invariant.
We present results on asymptotic preservation for Implicit-Explicit (ImEx) Runge-Kutta methods
under a range of assumptions.  In Section \ref{sec:experiments} we present
numerical examples that support the theoretical results and demonstrate the
effectiveness of this approach for approximating solutions of the KdV equation
\eqref{kdv}.

The new contributions of the present work include:
\begin{itemize}
    \item Characterization of the limits of soliton-like traveling wave solutions;
    \item Identification and analysis of other traveling wave solutions, including
            some with no counterpart in KdV but that seem to be related to other
            water wave models;
    \item Efficient ImEx time integration methods that are provably asymptotic preserving
            and asymptotically accurate;
    \item Energy-preserving full discretizations based on summation-by-parts operators in space and entropy 
    relaxation in time.
\end{itemize}

\section{Traveling wave solutions of KdVH}\label{sec:traveling-waves}
In this section we study traveling wave solutions of the KdVH system, focusing
first on waves that approximate KdV solitons in Section \ref{sec:soliton}
and then on a variety of other classes of traveling waves in Section \ref{sec:other-tw}.
The calculations in this section were performed using the Python packages
NumPy \cite{harris2020array}, SciPy \cite{virtanen2020scipy},
and Matplotlib \cite{hunter2007matplotlib}.

\subsection{KdV-soliton-like waves} \label{sec:soliton}
The soliton solutions of the KdV equation \eqref{kdv} take the form
\begin{align} \label{soliton}
    \ukdv(x,t) = 3c \sech^2\left(\frac{\sqrt{9c}(x-ct)}{6}\right).
\end{align}
The parameter $c$, which controls the width, amplitude, and speed of these
waves, can take any positive value.  Thus, the KdV equation possesses traveling
wave solutions with arbitrarily large velocity\footnote{In addition to the
soliton solutions, there also exist periodic traveling wave solutions of KdV
with any positive velocity.}.  This is perhaps not surprising, since the
phase velocity of small perturbations is also unbounded with respect to the wavenumber,
for the KdV equation.

Now we turn to the KdVH system.  To find traveling wave solutions, we apply the \emph{ansatz}
\begin{align} \label{tw-ansatz}
    u & = \ut(x-ct) & v & = \vt(x-ct) & w & = \wt(x-ct)
\end{align}
and we furthermore assume that each dependent variable tends
to a constant as $|\xi| := |x-ct|\to \infty$.
Then from \eqref{kdvH} we obtain the ordinary differential equation (ODE) system
\begin{subequations}
\begin{align}
    -c \ut' + \wt' + \ut \ut' & = 0 \label{Eq:kdvh_wave_ode_a} \\
    -c\tau \vt' & = \vt'-\wt \label{Eq:kdvh_wave_ode_b} \\
    -c \tau \wt' & = -\ut' + \vt. \label{Eq:kdvh_wave_ode_c}
\end{align}
\end{subequations}
We integrate the first equation and assume that $\ut, \wt$ tend to zero for large
$|x-ct|$ in order to determine the constant of integration.  We substitute
the result into the other two equations above to obtain the system
\begin{subequations}\label{Eq:traveling_wave_system}
\begin{align}
    \ut' & = \frac{1}{1+c\tau(\ut-c)} \vt \label{utilde-prime} \\
    \vt' & = \frac{1}{1+c\tau} (c-\ut/2) \ut. \label{vtilde-prime}
\end{align}
\end{subequations}
As depicted in Figure \ref{fig:phase_portrait_kdvh}, this system has two equilibrium points: $(\ut,\vt)=(0,0)$ and
$(\ut,\vt)=(2c,0)$.  For $\tau^{-1} > c^2$, the origin is a
hyperbolic point, while the other equilibrium is always a
center, with
\begin{align*}
 H(\ut, \vt) = \frac{\vt^2}{2} - \frac{\ut^2}{1+c\tau}\left(-\frac{c\tau} 8 \ut^2 + \frac{3c^2\tau-1}{6} \ut + \frac{c(1-c^2\tau)}2\right)
\end{align*}
as a first integral of the system. The system has a homoclinic connection (along the level set $H(\ut, \vt) = 0$), corresponding to a
solitary wave. Figure \ref{fig:phase_portrait_kdvh} shows the phase portrait of the system \eqref{Eq:traveling_wave_system},
along with a solitary wave solution obtained by numerically integrating \eqref{Eq:traveling_wave_system}.
This structure is essentially the same as that found when studying traveling wave solutions
of the KdV equation, and the homoclinic orbit tends to a KdV soliton as $\tau\to 0$.
Even though the relaxation parameter is not very small for the case plotted here, one can easily see that
for this solution $v\approx u_x$ and $w\approx v_x$.

\begin{figure}
    \center
    \includegraphics[width=4in]{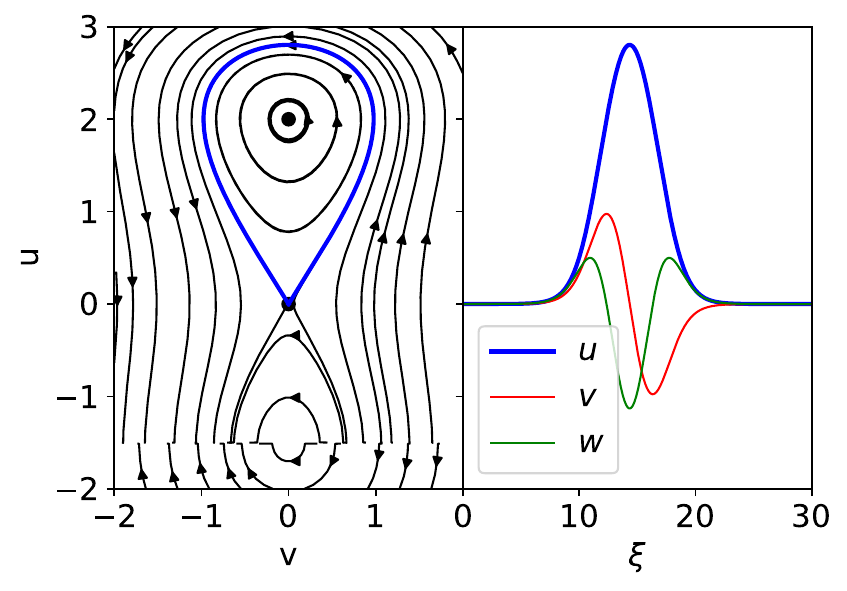}
    \caption{Phase portrait of the KdVH traveling wave system \eqref{Eq:traveling_wave_system} with wave speed $c=1$ and relaxation parameter $\tau = 2/5$.
    The homoclinic orbit, shown in blue, begins and ends at the saddle point $(0, 0)$, corresponding to a
    solitary traveling wave solution.  These waves tend to KdV solitons in the limit $\tau\to 0$.
   Note the presence of a line singularity, where the denominator in \eqref{utilde-prime} vanishes.}
    \label{fig:phase_portrait_kdvh}
\end{figure}

In order to compute these solitary waves more accurately, we use a Petviashvili-type algorithm
\cite{alvarez2014petviashvili}.  We rewrite \eqref{Eq:traveling_wave_system} as a single
second-order ODE for $\ut$.  Differentiating \eqref{utilde-prime} gives

\begin{align*}
    \vt' & = (1+c\tau(\ut-c)) \ut'' + c \tau (\ut')^2.
\end{align*}
Substituting this in \eqref{vtilde-prime}, we obtain after some simplification
\begin{align*}
    (1-c^2\tau)\ut'' + c\tau \ut \ut'' + c\tau (\ut')^2 & = \ut\frac{c-\ut/2}{1+c\tau}.
\end{align*}
This can be further rewritten as
\begin{align}\label{Eq:traveling_wave_kdvh_2nd}
- \ut'' + \frac{c}{(1 + c \tau)(1-c^2 \tau)}\ut = \frac{1}{(1 + c \tau)(1-c^2 \tau)} \frac{\ut^2}{2} + \frac{c\tau}{1 - c^2 \tau}\left(\ut \ut'\right)'.
\end{align}

Note that as $\tau \to 0$, \eqref{Eq:traveling_wave_kdvh_2nd} formally converges to
$$- \ut'' + c\ut = \frac{\ut^2}{2},$$
the equation satisfied by traveling waves of the KdV equation.
Next we write
\eqref{Eq:traveling_wave_kdvh_2nd} in the form $L \ut = N(\ut)$, where the linear operator is given by
$L = -D^2 + \frac{c}{(1 + c \tau)(1 - c^2 \tau)} I$, (with $D$ the derivative operator);
and the nonlinear operator is defined as
$N(\ut) = \frac{1}{(1 + c \tau)(1 - c^2 \tau)} \frac{\ut^2}{2} + \frac{c\tau}{1 - c^2 \tau}\left(\ut \ut'\right)'$.
Next we discretize, approximating the derivative operator $D$ using Fourier spectral differentiation.
Then we iteratively solve the system
$$L u^{[k+1]} = m(u^{[k]})^2 N(u^{[k]}), \quad k = 0,1,2,\ldots$$
where $u^{[0]}$ is an initial guess and $m(u^n) = \frac{\langle Lu^n, u^n \rangle}{\langle N(u^n), u^n \rangle}$ is
the stabilizing factor.
\begin{figure}[ht]
    \centering
    \includegraphics[width=4in]{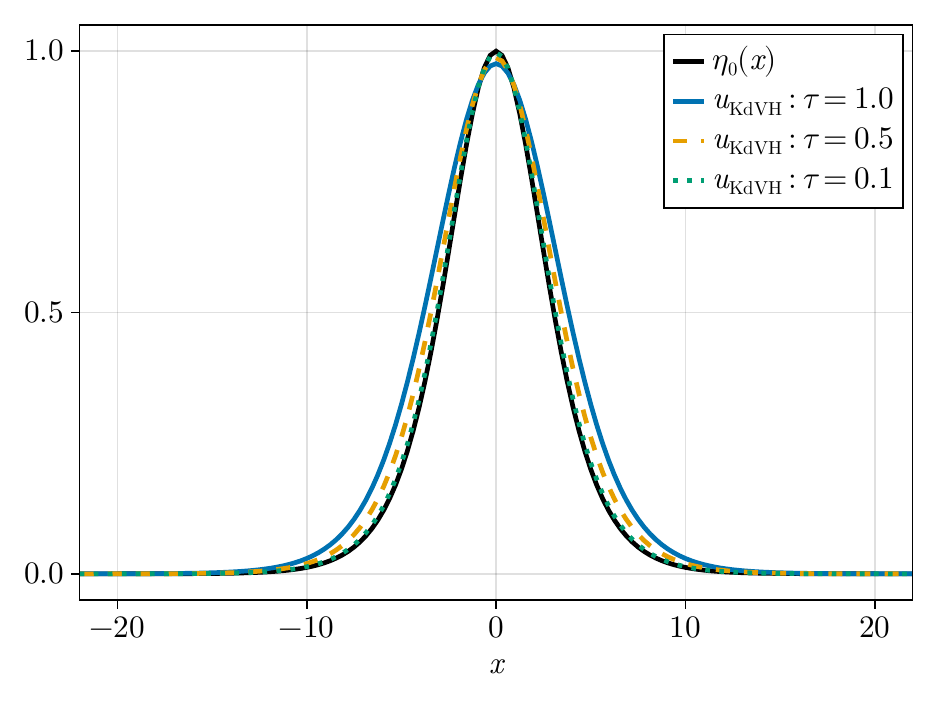}
    \caption{Comparison of the analytical soliton solution \eqref{soliton} (denoted by $\ukdv_0(x)$) of the KdV equation and the numerical solitary waves (denoted by
    $u_\mathrm{KdVH}: \tau$) for the KdVH system with different relaxation parameters $\tau$ and $c = \frac{1}{3}$.
    The solutions of the KdVH system converge to the exact solutions of the KdV equation
    as $\tau \to 0$, which is clearly evident.}
    \label{fig:solitary_wave_kdv_kdvh}
\end{figure}
Choosing the domain $[-30\pi, 30\pi]$ with $2^9$ spatial grid points, we compute the solitary waves of the KdVH system for
three different values of the relaxation parameter: $\tau = 1.0, 0.5, 0.1$ using the Petviashvili algorithm.
We iterate until the residual $\| L u^{[k]} - N(u^{[k]}) \|_\infty$ is approximately machine precision (for IEEE 64-bit floating point numbers).
Figure \ref{fig:solitary_wave_kdv_kdvh}
displays the resulting numerical KdVH solitary waves and the corresponding KdV soliton \eqref{soliton}.
We see that the KdVH solitary waves converge to the soliton as $\tau \to 0$.

The KdVH system also possesses periodic traveling waves with finite period (not shown here),
which tend to the well-known cnoidal solutions of the KdV equation as $\tau\to0$.

However, this does not fully describe the dynamics of the system \eqref{Eq:traveling_wave_system}.
Notably, for $c^2>\tau^{-1}$, the origin is not a saddle; instead the eigenvalues of the Jacobian
are purely imaginary.  Correspondingly, there is no homoclinic connection originating from the
origin.  Furthermore, as can be seen in the left plot of Figure
\ref{fig:phase_portrait_kdvh}, system \eqref{Eq:traveling_wave_system} becomes
singular when $\ut$ is chosen such that the denominator of \eqref{utilde-prime} vanishes.
These observations hint at the existence of other, quite different traveling waves, which
we study in the next section.

\subsection{Other traveling wave solutions} \label{sec:other-tw}
It turns out that the KdVH system admits a much larger set of traveling
wave solutions, some of which seem to be related to other dispersive water wave models.
To investigate general traveling waves, we note from \eqref{tw-ansatz} that such a wave with speed $c$
must satisfy
\begin{align*}
    v_t + c v_x & = 0 \\
    w_t + c w_x & = 0.
\end{align*}
These two equations stem from the representation of traveling waves, and
are as such independent of the actual form of the KdVH equation. Subsituting
$-cv_x$ and $-cw_x$, respectively, for $v_t$ and $w_t$, respectively, into
\eqref{kdvH-b} and \eqref{kdvH-c}, one obtains 
\begin{subequations}
\label{tw-constraints}
\begin{align}
    v & = u_x - \ctau w_x   \label{tw-constraints-a}\\
    w & = (1+\ctau) v_x,    \label{tw-constraints-b}
\end{align}
\end{subequations}
where $\ctau = c \tau$.
Substituting $v_x$ in \eqref{tw-constraints-b} through $u_{xx} - \alpha w_{xx}$, derived from \eqref{tw-constraints-a}, we end up with the equation
\begin{equation}
 w+\alpha(1+\alpha) w_{xx} \equiv \left(I+\alpha(1+\alpha)\partial_x^2\right)w = (1+\alpha) u_{xx}.
\end{equation}
Together with the requirement that $w$ is a smooth function that vanishes at $x = \pm \infty$, this constitutes a well-posed problem in $w$ and hence allows for a unique solution $w$ as a function of the right-hand side. The spatial derivative of said function is given by
\begin{equation}
 w_x = \left(I + \alpha(1+\alpha)\partial_x^2\right)^{-1} u_{xxx}.
\end{equation}
Substituting in \eqref{kdvH-a}, we obtain the dispersive equation
\begin{align*}
    u_t + u u_x + (1+\ctau) (I + \ctau (1+ \alpha)\partial_x^2)^{-1} u_{xxx} & = 0,
\end{align*}
or equivalently (upon multiplication by $(I + \alpha(1+\alpha)\partial_x^2)$)
\begin{align} \label{DP-like}
    u_t + u u_x + (1+\ctau) u_{xxx} + \ctau (1+\ctau) (u_{xxt} + 3u_x u_{xx} + u u_{xxx}) & = 0.
\end{align}
As $\ctau \to 0$ we formally recover the KdV equation, as expected, and if we apply the ansatz
\eqref{tw-ansatz} we find an ODE system equivalent to \eqref{Eq:traveling_wave_system}.
Indeed, this analysis is in a sense equivalent to that of \eqref{tw-ansatz}--\eqref{Eq:traveling_wave_system} above, but it
is revealing since \eqref{DP-like} involves the same set of terms
as the integrable Camassa-Holm and Degasperis-Procesi equations.  It is not possible to transform
\eqref{DP-like} exactly into either of those systems (see \cite[Dfns. 1--2]{constantin2009hydrodynamical}\footnote{However, \eqref{DP-like} does belong to the class of equations described in \cite[Prop. 2]{constantin2009hydrodynamical},
which approximate the Serre-Green-Naghdi equations.}),
but it is natural to expect that traveling wave solutions of KdVH may include waves
that are similar in nature to solutions of the Camassa-Holm or Degasperis-Procesi equations, and in fact this is the case.

Some examples of these traveling waves are shown in Figures \ref{fig:phase_portrait_kdvh_1}--\ref{fig:phase_portrait_kdvh_4}.
These include waves with arbitrarily large
positive speed, as well as left-going solitary waves that
have very nearly the shape of $\sqrt{\sech{x}}$, quite different from KdV solitons.
Furthermore, they include sharply-peaked solutions that are homoclinic connections
originating from a point on the line of singularity.
These solutions have no counterpart in solutions of KdV but seem to be related to solutions of
the Degasperis-Procesi or Camassa-Holm equations.

\begin{remark}
    For $c\tau>0$ (i.e., for right-going solutions of KdVH), the coefficient of $u_{xxt}$ in \eqref{DP-like}
    is positive.  This may seem problematic, since the linearization of \eqref{DP-like} is ill-posed in this
    case.  However, it should be remembered that not all solutions of \eqref{DP-like} are solutions of \eqref{kdvH};
    we additionally require that there exist a consistent solution of \eqref{tw-constraints}, which is equivalent to
    \begin{align*}
        v + \alpha(1+\alpha) v_{xx} & = u_x \\
        w + \alpha(1+\alpha) w_{xx} & = (1+\alpha) u_{xx}.
    \end{align*}
\end{remark}

\begin{figure}
    \centering
    \begin{subfigure}{0.41\textwidth}
        \centering
        \includegraphics[width=\textwidth]{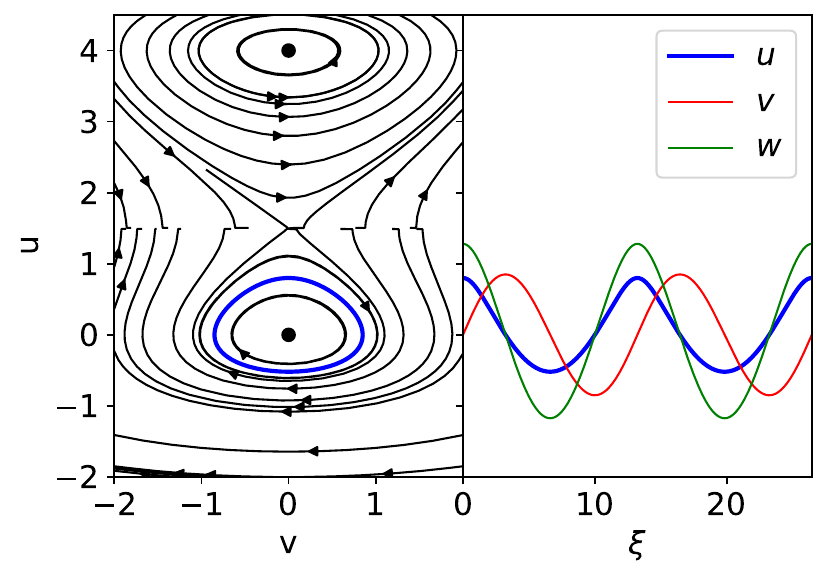}
        \caption{Periodic traveling wave: $\tau=1$, $c=2$}
        \label{fig:phase_portrait_kdvh_1}
    \end{subfigure}
    \hspace{0.05\textwidth}
    \begin{subfigure}{0.41\textwidth}
        \centering
        \includegraphics[width=\textwidth]{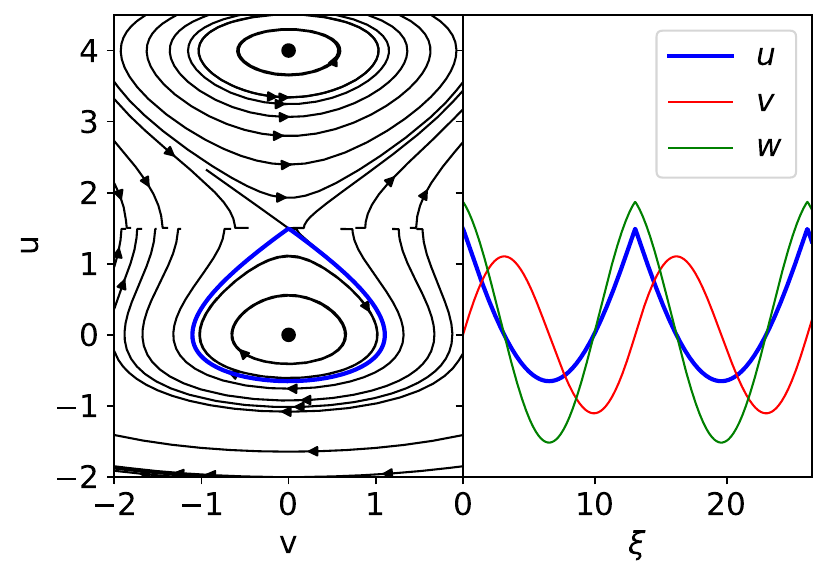}
        \caption{Peaked periodic traveling wave: $\tau=1$, $c=2$}
        \label{fig:phase_portrait_kdvh_2}
    \end{subfigure}
    \begin{subfigure}{0.41\textwidth}
        \centering
        \includegraphics[width=\textwidth]{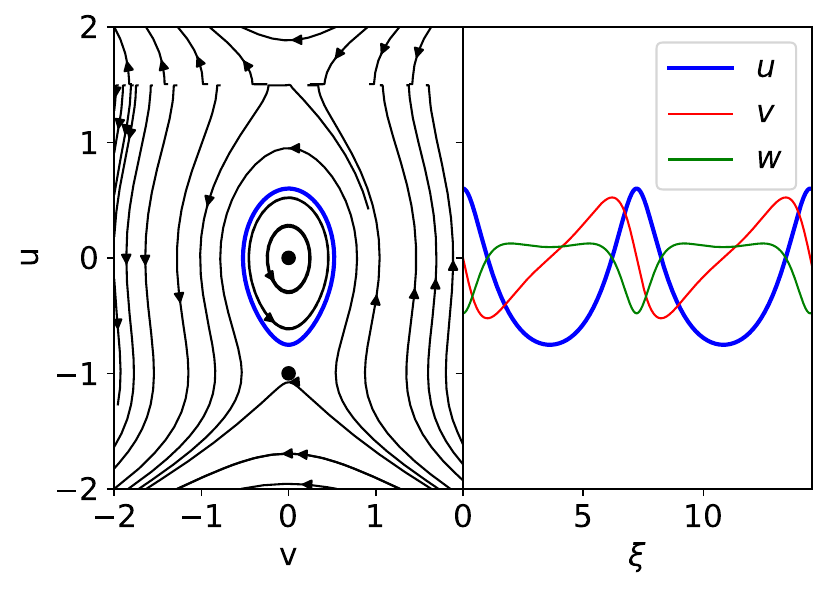}
        \caption{Periodic traveling wave: $\tau=1$, $c=-1/2$}
        \label{fig:phase_portrait_kdvh_3}
    \end{subfigure}
    \hspace{0.05\textwidth}
    \begin{subfigure}{0.41\textwidth}
        \centering
        \includegraphics[width=\textwidth]{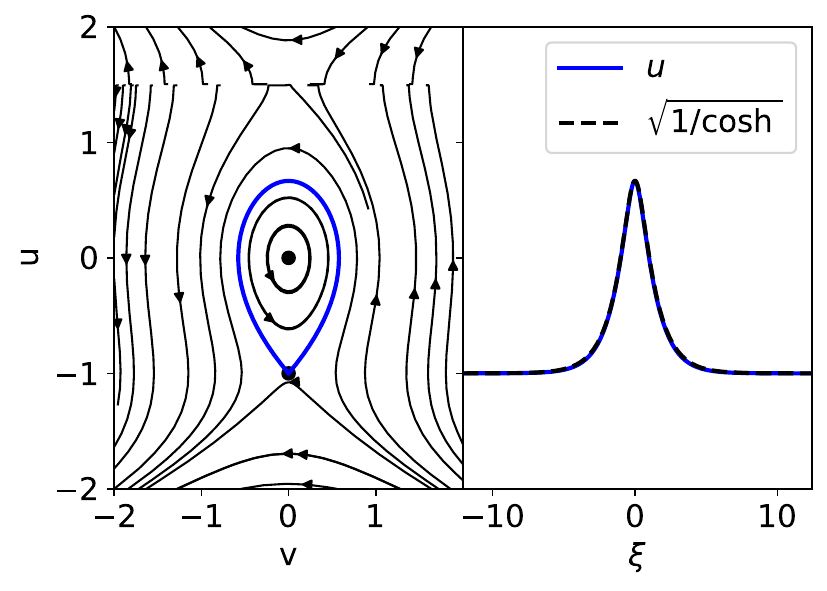}
        \caption{Solitary wave: $\tau=1$, $c=-1/2$}
        \label{fig:phase_portrait_kdvh_4}
    \end{subfigure}
    \caption{Traveling wave solutions of the KdVH equation.  These solutions have no counterpart in KdV solutions.
                Waves in the top row travel to the right at a speed greater than what is possible for soliton-like
                traveling waves.  Waves in the bottom row travel to the left.  The bottom-right solitary wave has
                nearly (but not quite) the shape $(5/3)\sqrt{\sech(5x/3)}-1$, which is plotted for comparison.
                Note the presence of a line singularity, where the denominator in \eqref{utilde-prime} vanishes.}
    \label{fig:traveling_waves}
\end{figure}

In experiments we have conducted (not shown here), these waves are not observed
to spontaneously emerge from general initial data, even if the initial data is
not well-prepared.  However, using any of these traveling wave solutions as
initial data, we observe that they are dynamically stable for long times.

\begin{remark}
The KdVH system can be written as a hyperbolic balance law
\begin{align}
    q_t + F'(q)q_x & = S(q).
\end{align}
The eigenvalues of the flux Jacobian $F'(q)$ evaluated at $u=0$ are $-\tau^{-1},
\pm\tau^{-1/2}$.  Notice that the greatest eigenvalue $+\sqrt{1/\tau}$ coincides precisely with the
maximum speed of the soliton-like traveling waves from the previous section.
\end{remark}

\section{Asymptotic-preserving and energy-conserving numerical discretization}\label{sec:disc}
In this section we develop structure-preserving discretizations for the KdVH system \eqref{kdvH},
focusing primarily on ImEx Runge-Kutta (RK) methods.
Let $q^n = [u^n, v^n, w^n]^T$ and $q^{n+1} = [u^{n+1}, v^{n+1}, w^{n+1}]^T$ denote the
numerical approximation of the solution at time $t_n$ and $t_{n+1} = t_n + \Delta t$, respectively.
Starting from the solution $q^n$ at time $t_n$, an ImEx-RK method applied to the system
\begin{align*}
    q_t = f(q) + g(q)
\end{align*}
computes the approximate solution at time $t_{n+1}$ as
\begin{subequations}
\label{eq:ImEx_mthd}
\begin{align}
    q^{(i)} & = q^n + \Delta t \left(\sum_{j = 1}^{i-1} \tilde{a}_{ij} f(q^{(j)}) + \sum_{j = 1}^{i} a_{ij} g(q^{(j)}) \right), \ i = 1,2, \ldots, s \;, \label{Eq:ImEx_mthd_stage_type_I}\\
    q^{n+1} & = q^n + \Delta t \left( \sum_{j = 1}^{s} \tilde{b}_{j} f(q^{(j)}) + \sum_{j = 1}^{s} b_{j} g(q^{(j)}) \right)  \; \label{Eq:ImEx_mthd_sol_type_I}.
\end{align}
\end{subequations}
This method can be compactly represented by the following Butcher tableau:
\begin{equation}\label{Mthd:ImEx-RK}
    \begin{array}{c|ccc}
      \pmb{\tIcex} & \tIAex \\
      \hline
      & \\[-1em]
      & \tIbex^T \\
    \end{array}
    \qquad
    \begin{array}{c|ccc}
      \tIcim & \tIAim \\
      \hline
      & \\[-1em]
      & \tIbim^T \\
    \end{array} \;,
  \end{equation}
where the matrix $\tIAex = (\tilde{a}_{ij}) \in \mathbb{R}^{s \times s}$ represents the explicit part, $A = (a_{ij}) \in \mathbb{R}^{s \times s}$ represents the implicit part, and the vectors $\tIcex$, $\tIbex$, $\tIcim$, and $\tIbim$ are in $\mathbb{R}^{s}$.
The choice of the splitting between the functions $f$ and $g$ is a key element
of the schemes introduced below.

\subsection{Asymptotic-preserving time discretization}\label{sec:AP-analysis}
The KdVH system \eqref{kdvH} includes both convective and algebraic terms that
depend on the relaxation parameter $\tau$ and become arbitrarily stiff as
$\tau\to 0$.  To deal with this stiffness we include the stiff terms in $g$,
to be integrated implicitly.
Our goal is to obtain accurate numerical solutions for all values of $\tau$.
In particular we seek a numerical discretization that tends to a consistent discretization of KdV as $\tau \to 0$;
this is known as an \emph{asymptotic-preserving (AP)} scheme \cite{boscarino2024asymptotic,jin2022asymptotic}.
In the rest of this section, we investigate the AP property for different classes of ImEx schemes.
In Section \ref{sec:experiments} we provide numerical tests that confirm the theoretical results.

The AP schemes provide an efficient methodology for solving multi-scale relaxation problems.
We denote the continuous problem \eqref{kdvH} with the parameter $\tau$ as $P^{\tau}$, and its limiting problem,
which reduces to the KdV equation, as $P^{0}$. For the development of AP schemes, it is sufficient to focus initially on time
discretization, leaving space continuous for now, with suitable spatial discretization to be adopted later. To this end, we
denote by $P^{\tau}_h$ the numerical discretization of the continuous problem $P^{\tau}$, where $h$ represents the
discretization parameters. For instance, $h = \Delta t$ denotes the time step in a problem that is continuous in space,
or $h = (\Delta t, \Delta x)$ in the fully discrete case. The AP property guarantees that for fixed discretization
parameters $h$, the scheme $P^{\tau}_h$ provides, in the limit $\tau \to 0$, a consistent discretization of the limit
problem $P^{0}$, denoted by $P^{0}_h$. This relationship is summarized in Figure \ref{fig:AP}.
Our primary focus in this section is on the limit $P^\tau_h \to P^0_h$ of the numerical discretizations,
although we will also comment on the convergence of the methods for $h \to 0$.
\begin{figure}[h!]
    \centering
    \begin{tikzpicture}
        \node (Pt) at (0, 0) {$P^\tau$};
        \node (P0) at (3, 0) {$P^0$};
        \node (Pttau) at (0, -3) {$P^\tau_h$};
        \node (P0tau) at (3, -3) {$P^0_h$};

        \draw[->] (Pt) -- (P0) node[midway, above] {$\tau \to 0$};
        \draw[->] (Pttau) -- (P0tau) node[midway, below] {$ \tau \to 0$};
        \draw[->] (Pttau) -- (Pt) node[midway, left] {$h \to 0$};
        \draw[->] (P0tau) -- (P0) node[midway, right] {$h \to 0$};
    \end{tikzpicture}
    \caption{A schematic illustration of the AP property \cite{Jin2012}.\label{fig:AP}}
\end{figure}
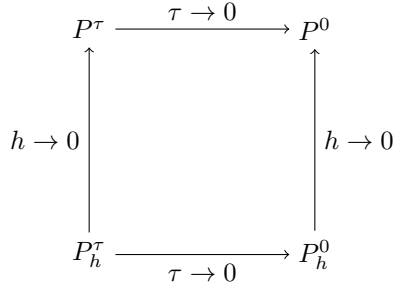

The AP property can be defined in general as follows:
\begin{defn}
    We say the numerical discretization $P^\tau_h$ is AP if
    the limiting discretization $P^0_h = \lim_{\tau \to 0}P^\tau_h$ is a
    consistent and stable discretization of the continuous limit model $P^0$.
\end{defn}
The AP property does not ensure that the scheme retains its order of accuracy
in the stiff limit $\tau \to 0$; for that we use the term \emph{asymptotically-accurate} (AA).
\begin{defn}
    The numerical discretization $P^\tau_h$ is AA if the
    local truncation error of $P^\tau_h$ with respect to $P^\tau$ is the same as that of $P^0_h$ with respect to $P^0$.
\end{defn}

The design of an AP ImEx scheme requires careful selection of the terms
to be integrated implicitly or explicitly.  It might seem most natural to
separate the hyperbolic terms from the algebraic (source) terms, but this turns
out not to provide the AP property.
Here we propose a different splitting that facilitates the construction of AP schemes.
Namely, we integrate explicitly only the nonlinear convective term, and integrate
implicitly all of the remaining (linear) terms:
\begin{equation}\label{Eq:KdVH_AP_split}
    \underbrace{\begin{pmatrix}
        u \\
        v \\
        w
    \end{pmatrix}_{t}}_{= q_t}
    =
    \underbrace{\begin{pmatrix}
        -u u_x \\
        0 \\
        0
    \end{pmatrix}}_{= f(q)}
    +
    \underbrace{\begin{pmatrix}
        -w_x \\
        \frac{1}{\tau} (v_x - w) \\
        \frac{1}{\tau} (-u_x + v)
    \end{pmatrix}}_{= g(q)}.
\end{equation}
Although the term $w_x$ may not seem to be stiff, recall that in the relaxation
limit it approximates the stiff third-derivative term. Also, including this term
in $g$ ensures that the system $q_t+g_x=0$ is strictly hyperbolic, which is an
important property with respect to stability \cite{NoeSch2014,RSIMEXFullEuler}.

Given this splitting, we will next establish the AP property for some ImEx RK methods.
To do so, we use the following specification of the AP property:
\begin{defn}\label{Def:AP-for-components}
    We say an ImEx RK method \eqref{eq:ImEx_mthd} applied to the splitting
    \eqref{Eq:KdVH_AP_split} of the KdVH system is AP for the $u$-component if
    $u^{n+1} \to \ukdv^{n+1}$ for $\tau \to 0$, where $\ukdv^{n+1}$ is the
    numerical solution of the KdV equation with splitting
    \begin{equation}\label{Eq:KdV_AP_split}
        \ukdv_t
        =
        \underbrace{-\ukdv \ukdv_x}_{= f(\ukdv)}
        \underbrace{-\ukdv_{xxx}}_{= g(\ukdv)}
    \end{equation}
    and the same ImEx RK method. We say it is AP for the auxiliary components $v$ and $w$ if
    $v^{n+1} \to \ukdv_x^{n+1}$ and $w^{n+1} \to \ukdv_{xx}^{n+1}$ for $\tau \to 0$.
\end{defn}

We now establish the AP property and provide sufficient conditions for the AA property for our proposed splitting of the KdVH
system using two classes of additive Runge-Kutta (ARK) methods --- type I (also known as type A) and type II (also known as type CK) \cite{boscarino2024asymptotic}.
An ImEx-RK method is classified as type I if the matrix $ \tIAim \in \mathbb{R}^{s \times s} $ is invertible.
An ARK method is of type II if $a_{11}=0$, in which case it can be written as
\begin{equation}\label{Mthd:CK_SA_ARK}
\begin{array}{c|ccc}
    0 & 0 \\
    \tIIcex & \tIIaex & \tIIAex\\
    \hline
    & \tIIbexf & \tIIbex^T
\end{array}
\qquad
\begin{array}{c|ccc}
    0 & 0 \\
    \tIIcim & \tIIaim & \tIIAim \\
    \hline
    & \tIIbimf & \tIIbim^T
\end{array} \;.
\end{equation}
Here, $\tIIcex$, $\tIIaex$, $\tIIbex$, $\tIIcim$, $\tIIaim$, and $\tIIbim$ are vectors in $\mathbb{R}^{s-1}$, $\tIIbexf$ and $\tIIbimf$ are scalars in $\mathbb{R}$, and $\tIIAex$, $\tIIAim$ are matrices in $\mathbb{R}^{(s-1) \times (s-1)}$, where $\tIIAim$ is assumed to be invertible and lower-triangular, so that the method is diagonally implicit.

This analysis, depending on the method class, involves two additional conditions \cite{boscarino2024asymptotic}.
 An ImEx-RK method is said to be \emph{globally stiffly accurate} (GSA) if
\begin{align}
\tilde{a}_{si} = \tilde{b}_{i} \quad \text{and} \quad a_{si} = b_{i}, \quad i = 1, 2, \dots, s.
\end{align}
Thus, the implicit part is stiffly accurate (SA) and the explicit part has a
first-same-as-last (FSAL) structure.

The initial data for KdVH \eqref{kdvH} is said to be \emph{well-prepared} if
$v(x,0) = Du$ and $w(x,0) = D^2u$.  Here $D=\partial_x$ in the present section,
where we consider solutions continuous in space.  For a fully discrete scheme,
$D$ should be replaced by the discrete derivative used in the scheme.

The proofs and results below are similar to those in the literature for
other classes of relaxation systems \cite{boscarino2024asymptotic}.
However, the relaxation system considered here, with the specific splitting
\eqref{Eq:KdVH_AP_split}, differs in such a way that the AP property cannot
be deduced directly from previous results.

We now present the following results regarding the AP and AA properties of type I methods:
\begin{thm}\label{Th:AP_result_type_I}
    An ImEx-RK method of type I applied to the splitting \eqref{Eq:KdVH_AP_split} of the hyperbolic approximation of the KdV
    equation is always AP for the $u$-component. For such a method, in the stiff limit $\tau \to 0$, we have
    \begin{align}
        u^{n+1} - \ukdv(t_{n+1}) = \mathcal{O}(\Delta t^p) \;,
    \end{align}
    where $p$ is the order of the ImEx-RK method.
    Furthermore, if the method is assumed to be globally stiffly accurate, it is also AP for the auxiliary components $v$ and $w$.
    In the stiff limit $\tau \to 0$, we have
    \begin{align}
        v^{n+1} - \ukdv_x(t_{n+1}) = \mathcal{O}(\Delta t^p) \quad \text{and} \quad w^{n+1} - \ukdv_{xx}(t_{n+1}) = \mathcal{O}(\Delta t^p)\;.
    \end{align}
\end{thm}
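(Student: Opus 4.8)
The plan is to analyze a single step of the method by writing the stage equations \eqref{Eq:ImEx_mthd_stage_type_I} componentwise for the splitting \eqref{Eq:KdVH_AP_split}, and then to exploit the invertibility of $\tIAim$ to expose the algebraic constraints that the stiff limit enforces. Collecting the stage values of each component into vectors $\mathbf{u}=(u^{(1)},\dots,u^{(s)})^T$, $\mathbf{v}$, $\mathbf{w}$ (each entry a function of $x$), and writing $D=\partial_x$, $\mathbf{1}=(1,\dots,1)^T$, the fact that $f$ has no $v$- or $w$-component means these stages involve only the implicit part, so $\mathbf{v}-\mathbf{1}v^n = \frac{\Delta t}{\tau}\tIAim(D\mathbf{v}-\mathbf{w})$ and $\mathbf{w}-\mathbf{1}w^n = \frac{\Delta t}{\tau}\tIAim(-D\mathbf{u}+\mathbf{v})$. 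Multiplying by $\frac{\tau}{\Delta t}\tIAim^{-1}$ yields the exact identities $D\mathbf{v}-\mathbf{w}=\frac{\tau}{\Delta t}\tIAim^{-1}(\mathbf{v}-\mathbf{1}v^n)$ and $-D\mathbf{u}+\mathbf{v}=\frac{\tau}{\Delta t}\tIAim^{-1}(\mathbf{w}-\mathbf{1}w^n)$.

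Next I would pass to the limit $\tau\to 0$ at fixed $\Delta t$. Provided the stage values stay bounded uniformly in $\tau$, both right-hand sides are $\order(\tau)$, so the stages satisfy $\mathbf{v}=D\mathbf{u}+\order(\tau)$ and $\mathbf{w}=D\mathbf{v}+\order(\tau)=D^2\mathbf{u}+\order(\tau)$; in particular the (possibly non-well-prepared) data $v^n,w^n$ enter only through these vanishing terms and drop out of the limit. Substituting $D\mathbf{w}=D^3\mathbf{u}+\order(\tau)$ into the $u$-stage equation $u^{(i)}=u^n+\Delta t\left(\sum_{j<i}\tilde a_{ij}(-u^{(j)}u^{(j)}_x)+\sum_j a_{ij}(-w^{(j)}_x)\right)$ produces exactly the stage equations of the same ImEx method applied to the KdV splitting \eqref{Eq:KdV_AP_split}, up to an $\order(\tau)$ perturbation. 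Hence $u^{(i)}\to\ukdv^{(i)}$ and $u^{n+1}\to\ukdv^{n+1}$, giving the AP property for the $u$-component; the bound $u^{n+1}-\ukdv(t_{n+1})=\order(\Delta t^p)$ then follows because the limiting scheme is the order-$p$ KdV integrator, whose numerical solution approximates $\ukdv(t_{n+1})$ to that order.

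For the auxiliary components I would invoke the GSA hypothesis, which forces $q^{n+1}=q^{(s)}$ so that the final update inherits the stage-level constraints directly. Combining $a_{sj}=b_j$ with the slope identities gives $v^{n+1}=v^{(s)}\to u^{(s)}_x=u^{n+1}_x\to\ukdv^{n+1}_x$ and $w^{n+1}=w^{(s)}\to v^{(s)}_x\to\ukdv^{n+1}_{xx}$; differentiating the order-$p$ estimate for the KdV integrator (for smooth solutions) then upgrades these to $v^{n+1}-\ukdv_x(t_{n+1})=\order(\Delta t^p)$ and $w^{n+1}-\ukdv_{xx}(t_{n+1})=\order(\Delta t^p)$. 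Without GSA the final weights need not reproduce the last row of $\tIAim$, so $v^{n+1}=v^n+\tIbim^T\tIAim^{-1}(\mathbf{v}-\mathbf{1}v^n)$ retains a dependence on $v^n$ that does not collapse to the constraint, which explains why the auxiliary statement requires GSA whereas the $u$-statement does not.

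The step I expect to be the main obstacle is the uniform-in-$\tau$ boundedness of the stages together with the well-posedness of the limit, which is precisely what legitimizes the $\order(\tau)$ remainders above. The cleanest route is to eliminate $\mathbf{v}$ and $\mathbf{w}$ to obtain, with $\epsilon=\tau/\Delta t$, a closed relation of the form $(I+\order(\epsilon))\mathbf{w}=D^2\mathbf{u}+\order(\epsilon)$, substitute back into the $u$-stage system, and show that the resulting nonlinear fixed-point map converges to that of the KdV scheme. Since the KdV stage system is nondegenerate for $\Delta t$ sufficiently small, an implicit-function-theorem or contraction argument supplies existence, uniform boundedness, and convergence of the stages; here the invertibility of $\tIAim$ — the defining feature of type I — is exactly what is needed to control the stiff slopes and close the argument.
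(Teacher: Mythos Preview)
Your proposal is correct and follows essentially the same route as the paper: write the stage equations componentwise, use the invertibility of $\tIAim$ to isolate the constraints $\mathbf{v}=D\mathbf{u}+\order(\tau)$ and $\mathbf{w}=D^2\mathbf{u}+\order(\tau)$, substitute into the $u$-stages to recover the ImEx-KdV scheme, and for the auxiliary variables invoke GSA to get $q^{n+1}=q^{(s)}$. The only cosmetic difference is that the paper packages the limit via formal Hilbert expansions in $\tau$ rather than your direct $\order(\tau)$ estimates, and it does not spell out the uniform-in-$\tau$ boundedness that you correctly flag and sketch; your contraction/implicit-function-theorem remark is a genuine refinement over the paper's formal treatment.
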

\begin{proof}
Let us denote by $\pmb{u} = [u^{(1)}, u^{(2)}, \dots, u^{(s)}]^T$, $\pmb{v} = [v^{(1)}, v^{(2)}, \dots, v^{(s)}]^T$, and $\pmb{w} = [w^{(1)}, w^{(2)}, \dots, w^{(s)}]^T$ the vectors of stage-solution components for the variables $u$, $v$, and $w$, respectively, and let $\pmb{e} = [1, 1, \dots, 1]^T$ be the vector of ones in $\mathbb{R}^s$. For the KdVH system with the splitting \eqref{Eq:KdVH_AP_split}, the equation \eqref{Eq:ImEx_mthd_stage_type_I} in component form becomes
\begin{subequations}
\begin{align}
    \pmb{u} &= u^n \pmb{e} + \Delta t \left( \tIAex (-\pmb{u} \pmb{u}_x) + \tIAim(-\pmb{w}_x) \right) \;, \label{Eq:stage_u_type_I} \\
    \pmb{v} &= v^n \pmb{e} + \frac{\Delta t}{\tau}  \tIAim(\pmb{v}_x-\pmb{w}) \;, \label{Eq:stage_v_type_I} \\
    \pmb{w} &= w^n \pmb{e} + \frac{\Delta t}{\tau}  \tIAim(-\pmb{u}_x+\pmb{v}) \;. \label{Eq:stage_w_type_I}
\end{align}
\end{subequations}
Similarly, the final update of the solution can be written in components as
\begin{subequations}
\begin{align}
    u^{n+1} &= u^n  + \Delta t \left(\tIbex^T  (-\pmb{u} \pmb{u}_x) + \tIbim^T  (-\pmb{w}_x)  \right) \;, \label{Eq:sol_u_type_I} \\
    v^{n+1} &= v^n  + \frac{\Delta t}{\tau}  \tIbim^T  (\pmb{v}_x-\pmb{w}) \;, \label{Eq:sol_v_type_I} \\
    w^{n+1} &= w^n + \frac{\Delta t}{\tau}  \tIbim^T  (-\pmb{u}_x+\pmb{v})  \;. \label{Eq:sol_w_type_I}
\end{align}
\end{subequations}
We assume there exist Hilbert expansions for $u^n$, $v^n$, and $w^n$:
\begin{align} \label{hilbert}
u^n = u^n_0 + \tau u^n_1 + \tau^2 u^n_2 + \cdots, \quad v^n  = v^n_0 + \tau v^n_1 + \tau^2 v^n_2 + \cdots, \quad w^n = w^n_0 + \tau w^n_1 + \tau^2 w^n_2 + \cdots \;,
\end{align}
and for the stage vectors $\pmb{u}$, $\pmb{v}$, and $\pmb{w}$ of the stage solution components:
\begin{align*}
\pmb{u} = \pmb{u}_0 + \tau \pmb{u}_1 + \tau^2 \pmb{u}_2 + \cdots, \quad \pmb{v}  = \pmb{v}_0 + \tau \pmb{v}_1 + \tau^2 \pmb{v}_2 + \cdots, \quad \pmb{w} = \pmb{w}_0 + \tau \pmb{w}_1 + \tau^2 \pmb{w}_2 + \cdots \;.
\end{align*}

We insert these expansions into the stage equations and compare the leading-order terms in the powers of $\tau$. The leading-order terms in the expansions for the stage equations \eqref{Eq:stage_v_type_I} and \eqref{Eq:stage_w_type_I} yield:
\begin{align*}
    \mathcal{O}\left(\tau^{-1}\right) &\colon
    \begin{cases}
        \tIAim \left((\pmb{v}_0)_x - \pmb{w}_0\right) = \pmb{0} \;, \\
         \tIAim  \left(-(\pmb{u}_0)_x + \pmb{v}_0\right) = \pmb{0} \;.
    \end{cases}
\end{align*}
Since $\tIAim$ is invertible, it follows that
\begin{align}\label{Eq:Aux_var_rel_type_I}
   \pmb{v}_0 =  (\pmb{u}_0)_x \quad \text{and} \quad  \pmb{w}_0 =  (\pmb{v}_0)_x \;.
\end{align}

The leading-order term in the expansion of \eqref{Eq:stage_u_type_I} gives
\begin{align*}
    \mathcal{O}\left(\tau^{0}\right) & \colon \pmb{u}_0 = u_0^n \pmb{e} + \Delta t \left( - 	\tIAex \pmb{u}_0 (\pmb{u}_0)_x  - \tIAim(\pmb{w}_0)_x   \right)\;.
\end{align*}
Using $\pmb{v}_0 =  (\pmb{u}_0)_x$ and $\pmb{w}_0 =  (\pmb{v}_0)_x$ in the previous equation, we obtain
\begin{align}\label{Eq:Asymp_stage_u_type_I}
    \pmb{u}_0 = u_0^n \pmb{e} + \Delta t \left(- \tIAex \pmb{u}_0 (\pmb{u}_0)_x  - \tIAim(\pmb{u}_0)_{xxx} \right)\;.
\end{align}
Using the equation \eqref{Eq:Aux_var_rel_type_I} in the leading-order term of the solution update for the first equation yields:
\begin{align}\label{Eq:Asymp_sol_u_type_I}
    u_0^{n+1} &= u_0^n  + \Delta t \left( - \tIbex^T  \pmb{u}_0 (\pmb{u}_0)_x  -\tIbim^T  (\pmb{u}_0)_{xxx}  \right) \;.
\end{align}
In the limit as $\tau \to 0$ the numerical scheme becomes:
\begin{subequations}
    \begin{align*}
        \pmb{u}_0 & = u_0^n \pmb{e} + \Delta t \left(- \tIAex \pmb{u}_0 (\pmb{u}_0)_x  - \tIAim(\pmb{u}_0)_{xxx} \right)\;, \\
        u_0^{n+1} &= u_0^n  + \Delta t \left( - \tIbex^T  \pmb{u}_0 (\pmb{u}_0)_x  -\tIbim^T  (\pmb{u}_0)_{xxx}  \right) \;.
    \end{align*}
\end{subequations}
This is precisely the numerical scheme obtained from the time-stepping method \eqref{Mthd:ImEx-RK} of type I when applied to
the KdV equation $\ukdv_t = - \ukdv \ukdv_x -\ukdv_{xxx} $, where the term $-\ukdv \ukdv_x$ is treated explicitly and the term $-\ukdv_{xxx}$ is treated
implicitly. Thus, we also have $u^{n+1} - \eta(t_{n+1}) = \mathcal{O}(\Delta t^p)$, where $p$ is the order of the ImEx-RK method.

In addition if the method is GSA then we can prove the AP property for the auxiliary variables $v$ and $w$. To prove this, we use the equations \eqref{Eq:stage_v_type_I} and \eqref{Eq:stage_w_type_I}. By utilizing the invertibility of $\tIAim$, we obtain the following:
\begin{subequations}
\begin{align*}
    \frac{\Delta t}{\tau} (\pmb{v}_x-\pmb{w}) & = \tIAim^{-1}(\pmb{v}-v^n \pmb{e}) \;,  \\
    \frac{\Delta t}{\tau} (-\pmb{u}_x+\pmb{v}) & = \tIAim^{-1}(\pmb{w}-w^n \pmb{e})  \;.
\end{align*}
\end{subequations}
Inserting these expressions into the update rules for the auxiliary components, \eqref{Eq:sol_v_type_I} and \eqref{Eq:sol_w_type_I}, we obtain:
\begin{subequations}
\begin{align*}
    v^{n+1} &= v^n  + \tIbim^T\tIAim^{-1}(\pmb{v}-v^n \pmb{e}) =  \tIbim^T\tIAim^{-1}\pmb{v} + (1-\tIbim^T\tIAim^{-1} \pmb{e} )v^n  \;, \\
    w^{n+1} & = w^n + \tIbim^T\tIAim^{-1}(\pmb{w}-w^n \pmb{e}) =  \tIbim^T\tIAim^{-1}\pmb{w} + (1-\tIbim^T\tIAim^{-1} \pmb{e} )w^n  \;.
\end{align*}
\end{subequations}

Since the method is assumed to be GSA, the stiff accuracy of $\tIAim$ implies that $\tIbim^T\tIAim^{-1} = [0, 0, \dots, 1]$ in $\mathbb{R}^{s}$, hence $1 - \tIbim^T\tIAim^{-1} \pmb{e} = 0$. Using this information, we obtain:
\begin{subequations}
\begin{align*}
    v^{n+1} &= \tIbim^T\tIAim^{-1}\pmb{v} = v^{(s)} \;, \\
    w^{n+1} &= \tIbim^T\tIAim^{-1}\pmb{w} = w^{(s)} \;.
\end{align*}
\end{subequations}

In the limit as $\tau \to 0$, we have $v_0^{n+1} = v_0^{(s)}$ and $w_0^{n+1} = w_0^{(s)}$. Using equation
\eqref{Eq:Aux_var_rel_type_I}, we can express $v_0^{n+1} = (u_0^{(s)})_x$ and $w_0^{n+1} = (u_0^{(s)})_{xx}$.
By the GSA property of the scheme, it follows from equation \eqref{Eq:Asymp_stage_u_type_I} and \eqref{Eq:Asymp_sol_u_type_I}
that $u_0^{(s)} = u_0^{n+1}$, which leads to $v_0^{n+1} = (u_0^{n+1})_x$ and $w_0^{n+1} = (u_0^{n+1})_{xx}$.
This establishes the AP property of the type I ImEx-RK method for the auxiliary components. In the stiff limit $\tau \to 0$,
the following error estimates hold for the $v$ and $w$ components:
$$v^{n+1} - \ukdv_{x}(t_{n+1}) = \mathcal{O}(\Delta t^p) \ \text{and} \ w^{n+1} - \ukdv_{xx}(t_{n+1}) = \mathcal{O}(\Delta t^p) \;.$$
\end{proof}

\begin{remark} \label{rem:no-GSA}
    Without the GSA assumption in Theorem~\ref{Th:AP_result_type_I}, the AP property is obtained only for the $u$-component.
    In this case, the auxiliary variables may not remain on the manifold of equilibria, which could lead to a
    degradation in their accuracy. However, this does not affect the order of accuracy of the $u$-component.
\end{remark}

We now consider ImEx-RK methods of type II and prove the AP property for these methods. Before establishing the results for a general method in this class, we first examine the asymptotic-preserving property of the simple type II method ARS(1,1,1) when applied to the splitting \eqref{Eq:KdVH_AP_split}. The ARS(1,1,1) method updates the solution from time step $t^n$ to $t^{n+1}$ as follows:
$$
q^{n+1} = q^n + \Delta t f\left(q^{n}\right) + \Delta t g\left(q^{n+1}\right) \;,
$$
where the components of the update are given by:
\begin{subequations}
\begin{align}
u^{n+1} & = u^{n} + \Delta t \left(- u^{n} u_x^{n} -w_x^{n+1} \right)  \;, \label{Eq:ARS_AP_a}  \\
v^{n+1} & = v^{n} + \frac{\Delta t }{\tau}\left(v_x^{n+1} - w^{n+1}  \right)   \;, \label{Eq:ARS_AP_b}  \\
w^{n+1} & = w^{n} + \frac{\Delta t }{\tau}\left(-u_x^{n+1} + v^{n+1}  \right)   \;. \label{Eq:ARS_AP_c}
\end{align}
\end{subequations}
We now insert the Hilbert expansions \eqref{hilbert}
into the updating equations and analyze the leading-order term in the expansions in terms of $\tau$. The leading-order terms in expansions of equations \eqref{Eq:ARS_AP_b} and \eqref{Eq:ARS_AP_c} yield
\begin{align*}
    \mathcal{O}\left(\tau^{-1}\right) &\colon w^{n+1}_0  = (v_0^{n+1})_x  \quad \text{and} \quad v^{n+1}_0  = (u_0^{n+1})_x \;.
\end{align*}
The equation \eqref{Eq:ARS_AP_a} implies
\begin{align*}
    \mathcal{O}\left(\tau^{0}\right) &\colon u^{n+1}_0 = u^{n}_0 - \Delta t u^{n}_0 (u^{n}_0)_x  -\Delta t  (w^{n+1}_0)_x\;.
\end{align*}
 Using $w^{n+1}_0  = (v_0^{n+1})_x$ and $v^{n+1}_0  = (u_0^{n+1})_x$ in the previous equation, we obtain
\begin{align*}
     u^{n+1}_0 & = u^{n}_0 - \Delta t u^{n}_0 (u_0^{n})_x -\Delta t (u^{n+1}_0)_{xxx}  \;, \\
     \frac{u^{n+1}_0-u^{n}_0}{\Delta t } + u^{n}_0 (u_0^{n})_x  + (u^{n+1}_0)_{xxx}  & = 0 \;,
\end{align*}
which, in the limit $\tau \to 0$, is the discretization of the original KdV equation by the ARS(1,1,1) method. This proves the AP property of the ARS(1,1,1) method for the splitting \eqref{Eq:KdVH_AP_split}. To prove the AP property for a general ImEx-RK method of type II, we require an additional assumption of well-preparedness of the initial data. The well-preparedness of the initial data for our system, following \cite{boscarino2024asymptotic}, is given by
\begin{equation}\label{Eq:well-prepared_IC}
    v^{0} = u^{0}_x +  \mathcal{O}(\tau) \quad \text{and} \quad w^{0} = u^{0}_{xx} +  \mathcal{O}(\tau).
\end{equation}
For a general ImEx-RK method of type II, we prove the following result:
\begin{thm}\label{Th:AP_result_type_II}
    A globally stiffly accurate ImEx-RK method of type II, applied to the splitting \eqref{Eq:KdVH_AP_split} of the hyperbolic
    approximation of the KdV equation, together with the well-prepared initial data \eqref{Eq:well-prepared_IC}, is AP for all
    components $u$, $v$, and $w$. Furthermore, in the stiff limit $\tau \to 0$, the following error estimates apply to all
    components:
    \begin{align*}
        u^{n+1} - \ukdv(t_{n+1}) = \mathcal{O}(\Delta t^p), \ v^{n+1} - \ukdv_{x}(t_{n+1}) = \mathcal{O}(\Delta t^p),
        \textrm{and} \ w^{n+1} - \ukdv_{xx}(t_{n+1}) = \mathcal{O}(\Delta t^p)\;,
    \end{align*}
    where $p$ is the order of the ImEx-RK method.
\end{thm}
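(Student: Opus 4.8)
The plan is to follow the structure of the proof of Theorem~\ref{Th:AP_result_type_I}, inserting the Hilbert expansions \eqref{hilbert} into the stage and update equations and matching powers of $\tau$, but now accounting for the fact that the type II structure \eqref{Mthd:CK_SA_ARK} has an \emph{explicit} first stage ($a_{11}=0$), so the full implicit matrix is no longer invertible. Writing the stages as $q^{(1)}=q^n$ together with the remaining $s-1$ internal stages governed by the reduced (invertible, lower-triangular) matrix $\tIIAim$, I would set up an induction on the time level $n$ with the hypothesis that the leading-order data lies on the equilibrium manifold, i.e.\ $v^n_0 = (u^n_0)_x$ and $w^n_0 = (u^n_0)_{xx}$. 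The base case $n=0$ is exactly the well-prepared initial data assumption \eqref{Eq:well-prepared_IC}.

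Next I would carry out the $\order(\tau^{-1})$ analysis of the $v$- and $w$-stage equations. For the internal stages $i=2,\dots,s$ these read, at leading order, $\hat{a}_i\, r^{(1)} + \sum_{j=2}^{s}\hat{A}_{ij}\, r^{(j)} = 0$ (and analogously for $w$), where $r^{(j)}$ denotes the leading-order relaxation residual $(v_0^{(j)})_x - w_0^{(j)}$, respectively $-(u_0^{(j)})_x + v_0^{(j)}$, and $\hat{a}_i$, $\hat{A}_{ij}$ are the entries of $\tIIaim$ and $\tIIAim$. The crucial point is that the first-stage residual $r^{(1)}$ is evaluated at $q^{(1)}=q^n$, and by the induction hypothesis it vanishes; this is precisely where well-preparedness enters and replaces the automatic projection that invertibility of the full matrix provided in the type I case. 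With $r^{(1)}=0$, invertibility of $\tIIAim$ forces $r^{(j)}=0$ for all internal stages, giving $v^{(i)}_0 = (u^{(i)}_0)_x$ and $w^{(i)}_0 = (u^{(i)}_0)_{xx}$ for $i=2,\dots,s$ (and for $i=1$ by the induction hypothesis). Substituting these relations, so that $(w^{(j)}_0)_x = (u^{(j)}_0)_{xxx}$, into the $\order(\tau^0)$ terms of the $u$-stage and $u$-update equations recovers exactly the type II ImEx-RK discretization of the KdV equation with $-\ukdv\ukdv_x$ explicit and $-\ukdv_{xxx}$ implicit, yielding $u^{n+1} - \ukdv(t_{n+1}) = \order(\Delta t^p)$.

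Finally I would close the induction and obtain the auxiliary-component result using the GSA assumption. Because GSA forces the last stage rows to coincide with the weight vectors, the updates satisfy $v^{n+1}=v^{(s)}$ and $w^{n+1}=w^{(s)}$ identically, so at leading order $v^{n+1}_0 = v^{(s)}_0 = (u^{(s)}_0)_x$ and $w^{n+1}_0 = w^{(s)}_0 = (u^{(s)}_0)_{xx}$; GSA also gives $u^{(s)}_0 = u^{n+1}_0$, hence $v^{n+1}_0 = (u^{n+1}_0)_x$ and $w^{n+1}_0 = (u^{n+1}_0)_{xx}$. This simultaneously establishes the AP property, with the $\order(\Delta t^p)$ estimates, for $v$ and $w$ and shows that the new time level is again well-prepared, closing the induction. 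I expect the main obstacle to be the bookkeeping around the explicit first stage: unlike type I, the first-stage relaxation residual is not annihilated by the implicit solve, so the whole argument hinges on verifying that well-preparedness is both needed at the base step and propagated by the GSA structure to every subsequent step, which is exactly what makes the induction (rather than a single-step computation) unavoidable here.
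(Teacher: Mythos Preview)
Your proposal is correct and follows essentially the same approach as the paper: insert the Hilbert expansions, use well-preparedness at level $t_n$ to kill the explicit first-stage residual at order $\tau^{-1}$, apply invertibility of $\tIIAim$ to project the internal stages onto the equilibrium manifold, recover the ImEx-RK scheme for KdV at order $\tau^0$, and finally use GSA to identify $v^{n+1}=v^{(s)}$, $w^{n+1}=w^{(s)}$, $u^{(s)}_0=u^{n+1}_0$ and thereby close the induction. The paper presents the argument as a single step at level $n$ and relegates the induction to a following remark, whereas you make the induction explicit from the outset, but the substance is identical.
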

\begin{proof}
Let $q^n = [u^n, v^n, w^n]^T$ and $q^{n+1} = [u^{n+1}, v^{n+1}, w^{n+1}]^T$ denote the numerical approximations to the true solution at time $t_n$ and $t_{n+1} = t_n + \Delta t$, respectively.
Starting from the solution $q^n$ at time $t_n$, an ImEx-RK method of type II \eqref{Mthd:CK_SA_ARK} applied to the system
\begin{align*}
    q_t = f(q) + g(q)
\end{align*}
computes the approximate solution at time $t_{n+1}$ as
\begin{subequations}\label{Eq:ImEx_mthd}
\begin{align}
    q^{(1)} & = q^n \;, \\
    q^{(i)} & = q^n + \Delta t \left( \hat{\tilde{a}}_{i1} f(q^n) + \sum_{j = 2}^{i-1} \hat{\tilde{a}}_{ij} f(q^{(j)}) + \hat{a}_{i1} g(q^n) + \sum_{j = 2}^{i} \hat{a}_{ij} g(q^{(j)}) \right), \ i = 2,3, \ldots, s \;, \\
    q^{n+1} & = q^n + \Delta t \left( \tIIbexf f(q^n) + \sum_{j = 2}^{s} \hat{\tilde{b}}_{j} f(q^{(j)}) + \tIIbimf g(q^n) + \sum_{j = 2}^{s} \hat{b}_{j}  g(q^{(j)}) \right) \;.
\end{align}
\end{subequations}
Using the notation $\pmb{q} = [q^{(2)}, q^{(3)}, \dots, q^{(s)}]^T$, the identity matrix $I$ in $\mathbb{R}^{3 \times 3}$, and the vector of ones $\pmb{e}$ in $\mathbb{R}^{s-1}$, this can be written compactly as
\begin{subequations}
\begin{align}
    \pmb{q} &= \pmb{e} \otimes q^n + \Delta t \left( \tIIaex \otimes f(q^n) + (\tIIAex \otimes I) f(\pmb{q}) + \tIIaim \otimes g(q^n) + (\tIIAim \otimes I) g(\pmb{q}) \right) \;, \label{Eq:ImEx_stage} \\
    q^{n+1} &= q^n + \Delta t \left( \tIIbexf f(q^n) + (\tIIbex^T \otimes I) f(\pmb{q}) + \tIIbimf g(q^n) + (\tIIbim^T \otimes I) g(\pmb{q}) \right) \;. \label{Eq:ImEx_sol}
\end{align}
\end{subequations}
Let us denote by $\pmb{u} = [u^{(2)}, u^{(3)}, \dots, u^{(s)}]^T$, $\pmb{v} = [v^{(2)}, v^{(3)}, \dots, v^{(s)}]^T$, and $\pmb{w} = [w^{(2)}, w^{(3)}, \dots, w^{(s)}]^T$ the vectors of stage-solution components for the variables $u$, $v$, and $w$, respectively. For the KdVH system with the splitting \eqref{Eq:KdVH_AP_split}, the equation \eqref{Eq:ImEx_stage} in component form becomes
\begin{subequations}
\begin{align}
    \pmb{u} &= u^n \pmb{e} + \Delta t \left( (-u^n u_{x}^n)\tIIaex + \tIIAex (-\pmb{u} \pmb{u}_x)  -w_x^n \tIIaim + \tIIAim(-\pmb{w}_x) \right) \;, \label{Eq:stage_u} \\
    \pmb{v} &= v^n \pmb{e} + \frac{\Delta t}{\tau} \left( (v_x^n - w^n) \tIIaim + \tIIAim(\pmb{v}_x-\pmb{w})\right) \;, \label{Eq:stage_v} \\
    \pmb{w} &= w^n \pmb{e} + \frac{\Delta t}{\tau} \left( (-u_x^n + v^n) \tIIaim + \tIIAim(-\pmb{u}_x+\pmb{v})\right) \;. \label{Eq:stage_w}
\end{align}
\end{subequations}
Similarly, the final update of the solution can be written in components as
\begin{subequations}
\begin{align}
    u^{n+1} &= u^n  + \Delta t \left( \tIIbexf(-u^n u_{x}^n) +  \tIIbex^T  (-\pmb{u} \pmb{u}_x) + \tIIbimf(-w_x^n)  + \tIIbim^T  (-\pmb{w}_x)  \right) \;, \label{Eq:sol_u} \\
    v^{n+1} &= v^n  + \frac{\Delta t}{\tau} \left( \tIIbimf(v_x^n - w^n)  + \tIIbim^T  (\pmb{v}_x-\pmb{w}) \right) \;, \label{Eq:sol_v} \\
    w^{n+1} &= w^n + \frac{\Delta t}{\tau} \left( \tIIbimf(-u_x^n + v^n)  + \tIIbim^T  (-\pmb{u}_x+\pmb{v}) \right)  \;. \label{Eq:sol_w}
\end{align}
\end{subequations}
We assume there exist Hilbert expansions \eqref{hilbert} for $u^n$, $v^n$, and $w^n$
and similar expansions for the stage vectors $\pmb{u}$, $\pmb{v}$, and $\pmb{w}$ of the stage solution components:
\begin{align*}
\pmb{u} = \pmb{u}_0 + \tau \pmb{u}_1 + \tau^2 \pmb{u}_2 + \cdots, \quad \pmb{v}  = \pmb{v}_0 + \tau \pmb{v}_1 + \tau^2 \pmb{v}_2 + \cdots, \quad \pmb{w} = \pmb{w}_0 + \tau \pmb{w}_1 + \tau^2 \pmb{w}_2 + \cdots \;.
\end{align*}

We insert these expansions into the stage equations and compare the leading-order terms in the powers of $\tau$.
The leading-order terms in the expansions for the stage equations \eqref{Eq:stage_v} and \eqref{Eq:stage_w} yield:
\begin{align*}
    \mathcal{O}\left(\tau^{-1}\right) &\colon
    \begin{cases}
        \left((v_0^n)_x - w_0^n \right) \tIIaim + \tIIAim  \left((\pmb{v}_0)_x - \pmb{w}_0\right) = \pmb{0} \;, \\
        \left(-(u_0^n)_x + v_0^n \right) \tIIaim + \tIIAim  \left(-(\pmb{u}_0)_x + \pmb{v}_0\right) = \pmb{0} \;.
    \end{cases}
\end{align*}
The well-prepared initial data at time $t_n$ implies that $v_0^n = (u_0^n)_x$ and $w_0^n = (v_0^n)_x$.
Since $\tIIAim$ is invertible, it follows that
\begin{align}\label{Eq:Aux_var_rel}
   \pmb{v}_0 = (\pmb{u}_0)_x \quad \text{and} \quad \pmb{w}_0 = (\pmb{v}_0)_x \;.
\end{align}

The leading-order term in the expansion of \eqref{Eq:stage_u} gives
\begin{align*}
    \mathcal{O}\left(\tau^{0}\right) & \colon \pmb{u}_0 = u_0^n \pmb{e} + \Delta t \left( - u_0^n (u_{0}^n)_x \tIIaex - \tIIAex \pmb{u}_0 (\pmb{u}_0)_x -(w_0^n)_x \tIIaim - \tIIAim(\pmb{w}_0)_x \right)\;.
\end{align*}
Using $\pmb{v}_0 =  (\pmb{u}_0)_x$, $\pmb{w}_0 =  (\pmb{v}_0)_x$, and the well-preparedness of the initial data at time $t_n$
in the previous equation, we obtain
\begin{align}\label{Eq:Asymp_stage_u}
    \pmb{u}_0 & = u_0^n \pmb{e} + \Delta t \left(  - u_0^n (u_{0}^n)_x \tIIaex - \tIIAex \pmb{u}_0 (\pmb{u}_0)_x -(u_0^n)_{xxx} \tIIaim - \tIIAim(\pmb{u}_0)_{xxx}  \right)\;.
\end{align}

The solution update for the first equation in the leading-order term becomes:
\begin{align}\label{Eq:Asymp_sol_u}
    u_0^{n+1} &= u_0^n  + \Delta t \left(  - \tIIbexf u_0^n (u_{0}^n)_x  - \tIIbex^T  \pmb{u}_0 (\pmb{u}_0)_x -\tIIbimf(u_0^n)_{xxx}  - \tIIbim^T  (\pmb{u}_0)_{xxx} \right) \;.
\end{align}
In the limit as $\tau \to 0$, we have $u^n = u^n_0$ and $\pmb{u} = \pmb{u}_0$, so the numerical scheme becomes:
\begin{subequations}
    \begin{align*}
        \pmb{u}_0 & = u_0^n \pmb{e} + \Delta t \left(  - u_0^n (u_{0}^n)_x \tIIaex - \tIIAex \pmb{u}_0 (\pmb{u}_0)_x -(u_0^n)_{xxx} \tIIaim - \tIIAim(\pmb{u}_0)_{xxx}  \right)\;, \\
        u_0^{n+1} &= u_0^n  + \Delta t \left(  - \tIIbexf u_0^n (u_{0}^n)_x  - \tIIbex^T  \pmb{u}_0 (\pmb{u}_0)_x -\tIIbimf(u_0^n)_{xxx}  - \tIIbim^T  (\pmb{u}_0)_{xxx} \right)\;.
    \end{align*}
\end{subequations}
This is precisely the numerical scheme obtained from the time-stepping method \eqref{Mthd:CK_SA_ARK} when applied to the KdV
equation $\ukdv_t = - \ukdv \ukdv_x -\ukdv_{xxx} $, where the term $-\ukdv \ukdv_x$ is treated explicitly and the term $-\ukdv_{xxx}$ is treated implicitly.
It is important to note that, in order to preserve this result at the subsequent time step, the auxiliary components must
remain well-prepared. Therefore, we must project $v^{n+1}$ and $w^{n+1}$ onto the corresponding equilibrium manifolds. To achieve
this, we utilize the GSA property of the method. Since the method is assumed to satisfy the GSA property, it follows that:
\begin{subequations}
\begin{align*}
    v^{n+1} &= \tIIbim^T\tIIAim^{-1}\pmb{v} = v^{(s)} \;, \\
    w^{n+1} &= \tIIbim^T\tIIAim^{-1}\pmb{w} = w^{(s)} \;.
\end{align*}
\end{subequations}

In the limit as $\tau \to 0$, we have $v_0^{n+1} = v_0^{(s)}$ and $w_0^{n+1} = w_0^{(s)}$. Using equation
\eqref{Eq:Aux_var_rel}, we can express $v_0^{n+1} = (u_0^{(s)})_x$ and $w_0^{n+1} = (u_0^{(s)})_{xx}$.
By the GSA property of the scheme, it follows from equations \eqref{Eq:Asymp_stage_u} and \eqref{Eq:Asymp_sol_u}
that $u_0^{(s)} = u_0^{n+1}$, which leads to $v_0^{n+1} = (u_0^{n+1})_x$ and $w_0^{n+1} = (u_0^{n+1})_{xx}$.
This establishes the AP property, and the error estimates for all components follow trivially, thereby proving
the AA property of the method.
\end{proof}

\begin{remark} \label{rem:req-GSA}
    Note that the well-preparedness of the initial data is assumed at time $t_n$. This is justified by the fact that if
    the initial condition is well-prepared, as given by \eqref{Eq:well-prepared_IC}, then Theorem \ref{Th:AP_result_type_II}
    for $n = 0$ ensures that the solution is well-prepared for the step $n = 1$. Hence, by an induction argument,
    the auxiliary components will remain on the local equilibrium for all subsequent times. Additionally,
    note that the GSA property is necessary to guarantee that the auxiliary variables lie on the local equilibrium manifolds,
    which ensures the accuracy of the $u$-components and, consequently, the auxiliary components.
\end{remark}

\subsection{Energy-conserving spatial semidiscretization} \label{sec:space-disc}

As a completely integrable nonlinear PDE, the KdV equation possesses
an infinite set of conserved quantities, can be derived from a
Lagrangian or Hamiltonian formalism, and possesses soliton solutions
that are related to certain symmetries.  Here we review these properties
and related properties of the KdVH system.

Consider the Lagrangian
\begin{align*}
    L = -\frac{\phi_t \phi_x}{2} - \frac{(\phi_x)^3}{6} - \frac{\phi_x\phi_{xxx}}{2} \;.
\end{align*}
The corresponding Euler-Lagrange equation is
\begin{align*}
    \phi_{xt} + \phi_x\phi_{xx} + \phi_{xxxx} = 0 \;.
\end{align*}
Introducing the variable $ \ukdv = \phi_x $, we obtain the KdV equation \eqref{kdv}.

As noted in the appendix of \cite{besse2022perfectly}, the KdVH system
\eqref{kdvH} can also be derived from a variational principle, starting
from the augmented Lagrangian
\begin{align}\label{Aug_L_KdVH}
    \hat{L} = -\frac{\phi_t \phi_x}{2} - \frac{\phi_x^3}{6} - \phi_x \chi_x - \tau \frac{\chi_t \chi_x}{2} - \tau \frac{\psi_t \psi_x}{2} + \frac{\psi_x^2}{2} + \psi \chi_x \;.
\end{align}
The corresponding Euler-Lagrange equations are
\begin{subequations}
\begin{align*}
    \phi_{xt} + \phi_x\phi_{xx} + \chi_{xx} &= 0 \;, \\
    \tau \psi_{xt} - \psi_{xx} + \chi_{x} &= 0 \;, \\
    \tau \chi_{xt} + \phi_{xx} - \psi_{x} &= 0 \;.
\end{align*}
\end{subequations}
Setting $u=\phi_x$, $v=\psi_x$, and $w=\chi_x$, we recover the hyperbolized system \eqref{kdvH}.

Similarly, the KdVH system \eqref{kdvH} can be expressed as a Hamiltonian PDE; the Hamiltonian is
obtained from the augmented Lagrangian \eqref{Aug_L_KdVH} using the Legendre
transformation. This yields the Hamiltonian density:
\begin{align*}
    \mathcal{H}(\phi, \psi, \chi, \phi_x, \psi_x, \chi_x) &= \frac{\partial \hat{L}}{\partial \phi_t} \phi_t + \frac{\partial \hat{L}}{\partial \psi_t} \psi_t + \frac{\partial \hat{L}}{\partial \chi_t} \chi_t - \hat{L} \\
    &= \frac{\phi_x^3}{6} + \phi_x \chi_x - \psi \chi_x - \frac{\psi_x^2}{2} \;.
\end{align*}
In terms of $ u $, $ v $, and $ w $, this is
\begin{align}
    H(u, v, w) = \int_{x_L}^{x_R} \left(\frac{u^3}{6} + uw - wF(v) - \frac{v^2}{2}\right) dx \;,
\end{align}
where
\begin{align*}
    F(v)(x,t) = \int_{x_L}^{x} v(y,t) dy \;.
\end{align*}
Then the KdVH system can be expressed in the form $q_t = \mathcal{J} \delta H(q)$ where
$q=[u,v,w]^T$ and
\begin{align}
    \delta H(q) & =
    \begin{bmatrix}
        \frac{u^2}{2} + w \\
        -v + F(w) \\
        u - F(v)
    \end{bmatrix} \; &
    \mathcal{J} & =
    \begin{bmatrix}
        -\partial_x & &  \\
        & -\frac{\partial_x}{\tau} & \\
        &  & -\frac{\partial_x}{\tau}
    \end{bmatrix} \;,
\end{align}
the KdV hyperbolized system \eqref{kdvH} can be expressed in the Hamiltonian PDE form $ q_t = \mathcal{J} \delta H(q) $.

The first three conserved quantities of the KdV equation \eqref{kdv} are
\begin{subequations} \label{KdV_invariants}
    \begin{align}
        \int \ukdv \, dx \quad (\text{mass}), \\
        \int \frac{1}{2} \ukdv^2 \, dx \quad (\text{energy}), \\
        \int \left(2\ukdv^3 - \ukdv_x^2\right) dx \quad (\text{Whitham}).
    \end{align}
\end{subequations}
For the KdVH system, it can be verified that the mass is conserved but the
other nonlinear invariants are not exactly conserved.  However, the system
conserves a modified energy:
\begin{align}\label{Mod_energy_KdVH}
    I(u,v,w) = \int_{x_L}^{x_R} \left(\frac{u^2}{2} + \tau \frac{v^2}{2} + \tau \frac{w^2}{2}\right) dx \;,
\end{align}
which satisfies $ -q_x = \mathcal{J} \delta I(q) $ and hence we obtain the relative equilibrium structure of the Hamiltonian PDE \cite{duran1998numerical,duran2000numerical}.
Thus, this modified energy is a conserved quantity of the KdVH system \eqref{kdvH}.
It can also be easily verified that the modified energy given in \eqref{Mod_energy_KdVH} is a conserved quantity of the system by checking that
\begin{align*}
\frac{dI}{dt} & = \int_{x_L}^{x_R} \left(uu_t + \tau vv_t + \tau ww_t\right) dx \\
              & = \int_{x_L}^{x_R} \left(u (-uu_x - w_x) + v (v_x-w) + w (v-u_x)\right) dx \\
              & = \int_{x_L}^{x_R} \left(-\left(\frac{u^3}{3}\right)_x-(uw)_x+\left(\frac{v^2}{2}\right)_x\right) dx \\
              & = 0 \;.
\end{align*}
Formally, the modified energy \eqref{Mod_energy_KdVH} converges to the KdV energy
for $\tau \to 0$.

Conservation of the modified energy \eqref{Mod_energy_KdVH} of KdVH and
the original energy \eqref{KdV_invariants} of KdV can be shown using
integration by parts and a split form of the nonlinear term, similar to
Burgers' equation \cite[eq.~(6.40)]{richtmyer1967difference}. Alternatively,
the chain rule can be used. However, the chain rule does not have a discrete
equivalent in general \cite{ranocha2019mimetic}. Thus, we use the first
approach and mimic integration by parts discretely using
summation-by-parts (SBP) operators \cite{svard2014review,fernandez2014review}.
In this article, we only need periodic SBP operators; see \cite{ranocha2021broad}
and references cited therein for several examples and details.

\begin{defn}
    A \emph{periodic first-derivative SBP operator}
    consists of
    a grid $\pmb{x}$,
    a consistent first-derivative operator $D$,
    and a symmetric and positive-definite matrix $M$
    such that
    \begin{equation}
    \label{eq:D1-periodic}
      M D + D^T M = 0.
    \end{equation}
\end{defn}
The operator $D$ is skew-symmetric with respect to the mass matrix $M$.
In other words, the product $M D$ is skew-symmetric in the usual sense. In particular,
$\pmb{y}^T M D \pmb{y} = 0$ for any vector $\pmb{y}$. We will use this property
several times to analyze energy conservation and refer to it as the
SBP property \eqref{eq:D1-periodic}.

We will also use upwind operators following \cite{mattsson2017diagonal}.
\begin{defn}
    A \emph{periodic first-derivative upwind SBP operator}
    consists of
    a grid $\pmb{x}$,
    consistent first-derivative operators $D_\pm$,
    and a symmetric and positive-definite matrix $M$
    such that
    \begin{equation}
    \label{eq:D1-upwind-periodic}
      M D_+ + D_-^T M = 0,
      \quad
      \frac{1}{2} M (D_+ - D_-) \text{ is negative semidefinite}.
    \end{equation}
\end{defn}

For upwind SBP operators, the arithmetic average $D = (D_+ + D_-) / 2$ is
an SBP operator \cite{mattsson2017diagonal}.
The semidiscretizations using SBP operators use a collocation approach,
i.e., all nonlinear operations are performed pointwise. For example,
$\pmb{u} = \bigl(u(\pmb{x}_i)\bigr)_{i=1}^N$
is the vector of the values of the function $u$ at the grid points and
$\pmb{u}^2 = \bigl(u(\pmb{x}_i)^2\bigr)_{i=1}^N$. We will only use
diagonal-norm operators, i.e., operators with diagonal mass/norm matrix $M$.

\begin{example}
    First-order accurate periodic first-derivative finite difference
    SBP operators are given by
    \begin{equation}
    \begin{aligned}
        D_+ &= \frac{1}{\Delta x} \begin{pmatrix}
            -1 & 1 & 0 & \cdots & 0 \\
            0 & -1 & 1 & \cdots & 0 \\
            \vdots & \vdots & \vdots & \ddots & \vdots \\
            0 & 0 & \cdots & -1 & 1 \\
            1 & 0 & \cdots & 0 & -1
        \end{pmatrix},
        &
        D_- &= \frac{1}{\Delta x} \begin{pmatrix}
            1 & 0 & \cdots & 0 & -1 \\
            -1 & 1 & 0 & \cdots & 0 \\
            0 & -1 & 1 & \cdots & 0 \\
            \vdots & \vdots & \vdots & \ddots & \vdots \\
            0 & 0 & \cdots & -1 & 1
        \end{pmatrix},
        \\
        D = \frac{D_+ + D_-}{2} &= \frac{1}{2 \Delta x} \begin{pmatrix}
            0 & 1 & 0 & \cdots & -1 \\
            -1 & 0 & 1 & \cdots & 0 \\
            0 & -1 & 0 & \cdots & 1 \\
            \vdots & \vdots & \vdots & \ddots & \vdots \\
            1 & 0 & \cdots & -1 & 0
        \end{pmatrix},
        &
        M &= \Delta x \operatorname{I},
    \end{aligned}
    \end{equation}
    where $\Delta x$ is the grid spacing and $\operatorname{I}$ is the
    identity matrix. The central SBP operator $D$ is actually second-order
    accurate. Moreover,
    \begin{equation}
        D_+ D D_-
        =
        \frac{1}{\Delta x^3} \begin{pmatrix}
            0 & -1 & 1/2 & 0 & \dots & 0 & -1/2 & 1 \\
            1 & 0 & -1 & 1/2 & \dots & 0 & 0 & -1/2 \\
            -1/2 & 1 & 0 & -1 & \dots & 0 & 0 & 0 \\
            0 & -1/2 & 1 & 0 & \dots & 0 & 0 & 0 \\
            \vdots & \vdots & \vdots & \vdots & \ddots & \vdots & \vdots & \vdots \\
            0 & 0 & 0 & 0 & \dots & 0 & -1 & 1/2 \\
            1/2 & 0 & 0 & 0 & \dots & 1 & 0 & -1 \\
            -1 & 1/2 & 0 & 0 & \dots & -1/2 & 1 & 0
        \end{pmatrix}
    \end{equation}
    is a second-order accurate approximation of the third derivative.
\end{example}

Using the split-form discretization of the nonlinear term together with
an upwind discretization of the third derivative leads to the semidiscretization
\begin{equation}
\label{eq:KdV-semidiscrete}
    \partial_t \pmb{\ukdv}
    + \frac{1}{3} \left( D \pmb{\ukdv}^2 + \pmb{\ukdv} D \pmb{\ukdv} \right)
    + D_+ D D_- \pmb{\ukdv}
    =
    \pmb{0}
\end{equation}
of the KDV equation \eqref{kdv}.

\begin{thm}
\label{thm:KdV-semidiscrete}
    The semidiscretization \eqref{eq:KdV-semidiscrete} conserves the discrete
    counterparts
    \begin{equation}
        \pmb{1}^T M \pmb{\ukdv} \approx \int \ukdv,
        \qquad
        \frac{1}{2} \pmb{1}^T M \pmb{\ukdv}^2 \approx \frac{1}{2} \int \ukdv^2
    \end{equation}
    of the linear and quadratic invariant \eqref{KdV_invariants} of the KdV
    equation \eqref{kdv} if periodic diagonal-norm upwind SBP operators with the
    same mass matrix $M$ are used.
\end{thm}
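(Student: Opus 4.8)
The plan is to differentiate each discrete invariant in time, substitute the right-hand side of the semidiscretization \eqref{eq:KdV-semidiscrete}, and show that every resulting term vanishes using the SBP relations \eqref{eq:D1-periodic}, \eqref{eq:D1-upwind-periodic} together with the diagonal structure of $M$. Since the scheme is continuous in time, $\tfrac{d}{dt}$ acts only on $\pmb{\ukdv}$, so no additional approximation enters. First I would record the facts that will be used repeatedly: because $D=(D_++D_-)/2$ is itself a periodic SBP operator, $MD$ is skew-symmetric by \eqref{eq:D1-periodic}; consistency gives $D\pmb{1}=D_\pm\pmb{1}=\pmb{0}$, and combining this with the SBP relations yields $\pmb{1}^T M D=\pmb{0}$ and $\pmb{1}^T M D_+=-(D_-\pmb{1})^T M=\pmb{0}$.

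For mass conservation I would compute $\tfrac{d}{dt}\,\pmb{1}^T M\pmb{\ukdv}=\pmb{1}^T M\,\partial_t\pmb{\ukdv}$ and insert \eqref{eq:KdV-semidiscrete}. The dispersive contribution $\pmb{1}^T M D_+ D D_-\pmb{\ukdv}$ vanishes because $\pmb{1}^T M D_+=\pmb{0}$. In the split nonlinear term, $\pmb{1}^T M D\pmb{\ukdv}^2=0$ since $\pmb{1}^T M D=\pmb{0}$, while the remaining piece satisfies $\pmb{1}^T M(\pmb{\ukdv}\,D\pmb{\ukdv})=\pmb{\ukdv}^T M D\pmb{\ukdv}=0$ by skew-symmetry of $MD$; here diagonality of $M$ is what lets me rewrite the pointwise product as $\pmb{1}^T M\operatorname{diag}(\pmb{\ukdv})=\pmb{\ukdv}^T M$.

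For the quadratic invariant I would compute $\tfrac{d}{dt}\,\tfrac12\,\pmb{\ukdv}^T M\pmb{\ukdv}=\pmb{\ukdv}^T M\,\partial_t\pmb{\ukdv}$. The split form is designed precisely so the nonlinear terms cancel: skew-symmetry of $MD$ gives $\pmb{\ukdv}^T M D\pmb{\ukdv}^2=-(\pmb{\ukdv}^2)^T M D\pmb{\ukdv}$, whereas diagonality of $M$ gives $\pmb{\ukdv}^T M(\pmb{\ukdv}\,D\pmb{\ukdv})=(\pmb{\ukdv}^2)^T M D\pmb{\ukdv}$, so the two contributions of the $\tfrac13(\cdot)$ term sum to zero. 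It then remains to dispose of $\pmb{\ukdv}^T M D_+ D D_-\pmb{\ukdv}$, and the crux of the whole argument is to establish that $M D_+ D D_-$ is skew-symmetric. I would prove this by taking transposes and applying the three SBP identities $D_+^T M=-M D_-$, $D^T M=-M D$, and $D_-^T M=-M D_+$ in succession, obtaining $(M D_+ D D_-)^T=D_-^T D^T D_+^T M=-M D_+ D D_-$, which forces the quadratic form to vanish.

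I expect this skew-symmetry of $M D_+ D D_-$ to be the only nonroutine step; everything else is bookkeeping with the SBP relations. Two hypotheses of the theorem are genuinely used and I would flag them explicitly: the diagonal-norm assumption, needed to move $M$ past the pointwise products in the split nonlinear term, and the requirement that all operators share the \emph{same} mass matrix $M$, without which the telescoping chain of transposes in the dispersive term would not close. Notably, although $D_\pm$ are upwind operators, the particular combination $D_+ D D_-$ is skew-adjoint with respect to $M$ and therefore contributes no numerical dissipation, which is exactly why the quadratic invariant is conserved rather than merely dissipated.
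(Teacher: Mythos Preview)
Your proposal is correct and follows essentially the same approach as the paper: differentiate the invariants, substitute the semidiscretization, and eliminate each term using the periodic SBP identities together with the diagonal-norm assumption. The only cosmetic difference is in the dispersive term for the quadratic invariant: the paper applies $MD_+ = -D_-^TM$ once to write $\pmb{\ukdv}^T M D_+ D D_- \pmb{\ukdv} = -(D_-\pmb{\ukdv})^T M D (D_-\pmb{\ukdv})$ and then invokes skew-symmetry of $MD$, whereas you establish the slightly stronger fact that $MD_+DD_-$ is itself skew-symmetric---but the underlying chain of SBP identities is identical.
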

\begin{proof}
    The linear invariant is conserved since
    \begin{equation}
    \begin{aligned}
        \partial_t \pmb{1}^T M \pmb{\ukdv}
        &=
        \pmb{1}^T M \partial_t \pmb{\ukdv}
        =
        - \frac{1}{3} \pmb{1}^T M D \pmb{\ukdv}^2
        - \frac{1}{3} \pmb{1}^T M \pmb{\ukdv} D \pmb{\ukdv}
        - \pmb{1}^T M D_+ D D_- \pmb{\ukdv}
        \\
        &=
        - \frac{1}{3} \pmb{1}^T M D \pmb{\ukdv}^2
        - \frac{1}{3} \pmb{\ukdv}^T M D \pmb{\ukdv}
        - \pmb{1}^T M D_+ D D_- \pmb{\ukdv}
        \\
        &=
        + \frac{1}{3} \pmb{1}^T D^T M \pmb{\ukdv}^2
        - \frac{1}{6} \pmb{\ukdv}^T M D \pmb{\ukdv}
        + \frac{1}{6} \pmb{\ukdv}^T D^T M \pmb{\ukdv}
        + \pmb{1}^T D_-^T M D D_- \pmb{\ukdv}
        =
        0.
    \end{aligned}
    \end{equation}
    In the second line, we have used that the mass matrix $M$ is diagonal.
    The SBP properties are used in the third line.
    Please note that we have used the SBP property \eqref{eq:D1-periodic}
    for half of the
    term $- \frac{1}{3} \pmb{\ukdv}^T M D \pmb{\ukdv}$.
    The final step follows from consistency of the derivative operators
    ($D \pmb{1} = \pmb{0}$) and the symmetry of $M$.
    Finally, we obtain
    \begin{equation}
        \partial_t \pmb{1}^T M \pmb{\ukdv}^2
        =
        \partial_t \bigl( \pmb{\ukdv}^T M \pmb{\ukdv} \bigr)
        =
        2 \pmb{\ukdv}^T M \partial_t \pmb{\ukdv},
    \end{equation}
    where we have used that $M$ is diagonal. Then, we have
    \begin{equation}
    \begin{aligned}
        \frac{1}{2} \partial_t \pmb{1}^T M \pmb{\ukdv}^2
        &=
        - \frac{1}{3} \pmb{\ukdv}^T M D \pmb{\ukdv}^2
        - \frac{1}{3} \pmb{\ukdv}^T M \pmb{\ukdv} D \pmb{\ukdv}
        - \pmb{\ukdv}^T M D_+ D D_- \pmb{\ukdv}
        \\
        &=
        - \frac{1}{3} \pmb{\ukdv}^T M D \pmb{\ukdv}^2
        - \frac{1}{3} (\pmb{\ukdv}^2)^T M D \pmb{\ukdv}
        + \pmb{\ukdv}^T D_-^T M D D_- \pmb{\ukdv}
        =
        0,
    \end{aligned}
    \end{equation}
    where we have again used the SBP properties \eqref{eq:D1-periodic}
    and \eqref{eq:D1-upwind-periodic} in the last line and
    the fact that $\pmb{y}^T M D \pmb{y} = 0$, i.e., that $M D$ is
    skew-symmetric in the usual sense.
\end{proof}

Next, we consider the semidiscretization
\begin{equation}
\label{eq:KdVH-semidiscrete}
\begin{aligned}
    \partial_t \pmb{u}
    + \frac{1}{3} \left( D \pmb{u}^2 + \pmb{u} D \pmb{u} \right) + D_+ \pmb{w} &= \pmb{0},
    \\
    \partial_t \pmb{v}
    + \frac{1}{\tau} \left( -D \pmb{v} + \pmb{w} \right) &= \pmb{0},
    \\
    \partial_t \pmb{w}
    + \frac{1}{\tau} \left( D_- \pmb{u} - \pmb{v} \right) &= \pmb{0}
\end{aligned}
\end{equation}
of the KdVH system \eqref{kdvH}.

\begin{thm}
\label{thm:KdVH-semidiscrete}
    The semidiscretization \eqref{eq:KdVH-semidiscrete} conserves the discrete
    counterparts
    \begin{equation}
        \pmb{1}^T M \pmb{u} \approx \int u,
        \qquad
        \frac{1}{2} \pmb{1}^T M \left( \pmb{u}^2 + \tau \pmb{v}^2 + \tau \pmb{w}^2 \right) \approx \frac{1}{2} \int \left( u^2 + \tau v^2 + \tau w^2 \right)
    \end{equation}
    of the linear and quadratic invariant \eqref{Mod_energy_KdVH} of the KdVH
    system \eqref{kdvH} if periodic diagonal-norm upwind SBP operators with the
    same mass matrix $M$ are used.
\end{thm}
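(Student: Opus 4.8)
The plan is to mimic the proof of Theorem~\ref{thm:KdV-semidiscrete}: differentiate each discrete invariant in time, substitute the semidiscrete right-hand sides from \eqref{eq:KdVH-semidiscrete}, and then collapse every resulting term to zero using four tools, namely (i) the SBP property $MD + D^TM = 0$, (ii) the upwind compatibility $MD_+ + D_-^T M = 0$, (iii) consistency $D\pmb{1} = D_\pm \pmb{1} = \pmb{0}$, and (iv) the fact that $M$ is diagonal, which lets me rewrite $\pmb{1}^T M (\pmb{a}\pmb{b}) = \pmb{a}^T M \pmb{b}$ for pointwise products and guarantees $M = M^T$. It is essential that all three operators $D$, $D_+$, $D_-$ share the same $M$, since the cancellations couple different variables.

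For the linear invariant I would compute $\partial_t \pmb{1}^T M \pmb{u} = \pmb{1}^T M \partial_t \pmb{u}$ and insert the first line of \eqref{eq:KdVH-semidiscrete}. The two nonlinear contributions $\pmb{1}^T M D \pmb{u}^2$ and $\pmb{1}^T M \pmb{u} D\pmb{u}$ vanish exactly as in Theorem~\ref{thm:KdV-semidiscrete}: the first by $MD = -D^TM$ together with $D\pmb{1} = \pmb{0}$, the second because $\pmb{u}^T M D \pmb{u} = 0$ by skew-symmetry. The only genuinely new term is $\pmb{1}^T M D_+ \pmb{w}$, which vanishes since $MD_+ = -D_-^T M$ and $D_- \pmb{1} = \pmb{0}$. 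The $v$- and $w$-equations do not enter, since mass is carried by $u$ alone.

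The substantive part is the quadratic invariant. Differentiating and using that $M$ is diagonal gives $\tfrac{1}{2}\partial_t \pmb{1}^T M(\pmb{u}^2 + \tau\pmb{v}^2 + \tau\pmb{w}^2) = \pmb{u}^T M \partial_t\pmb{u} + \tau\pmb{v}^T M\partial_t\pmb{v} + \tau\pmb{w}^T M\partial_t\pmb{w}$. The crucial structural feature is that the $\tau$-weighting is chosen precisely so that the factor $1/\tau$ in the $v$- and $w$-equations cancels against this $\tau$, leaving $\tau$-independent expressions. After substitution I expect four kinds of terms: the split-form nonlinearity $-\tfrac{1}{3}\bigl(\pmb{u}^T M D\pmb{u}^2 + (\pmb{u}^2)^T M D\pmb{u}\bigr)$, which cancels by the same identity $\pmb{u}^T M D\pmb{u}^2 = -(\pmb{u}^2)^T M D\pmb{u}$ used in Theorem~\ref{thm:KdV-semidiscrete}; the diagonal term $\pmb{v}^T M D\pmb{v}$, which is zero by skew-symmetry; the algebraic coupling $-\pmb{v}^T M\pmb{w} + \pmb{w}^T M\pmb{v}$, which vanishes because $M = M^T$; and the dispersive coupling $-\pmb{u}^T M D_+\pmb{w} - \pmb{w}^T M D_-\pmb{u}$.

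The main obstacle, and the one place where the specific choice of operators in \eqref{eq:KdVH-semidiscrete} matters, is this last dispersive coupling; note that here the handling genuinely differs from the $D_+ D D_-$ term in Theorem~\ref{thm:KdV-semidiscrete}. The point is that $D_+$ appears in the $u$-equation while $D_-$ appears in the $w$-equation, and these are exactly the paired operators in the upwind SBP relation. I would rewrite $\pmb{w}^T M D_-\pmb{u} = \pmb{u}^T D_-^T M\pmb{w} = -\pmb{u}^T M D_+\pmb{w}$, using $M = M^T$ and then $D_-^T M = -MD_+$; the two dispersive contributions then cancel. Care is needed only to track which of $D_+, D_-$ sits in which equation and to apply the compatibility in the correct direction. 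With that in hand every term is accounted for and the time derivative of the modified energy is zero, in discrete parallel to the continuous computation following \eqref{Mod_energy_KdVH}.
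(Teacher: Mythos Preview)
Your proposal is correct and follows essentially the same approach as the paper: differentiate the invariants, substitute the semidiscrete right-hand sides, and use the diagonal-norm and (upwind) SBP properties to collapse each term, with the key dispersive cancellation $-\pmb{u}^T M D_+\pmb{w} - \pmb{w}^T M D_-\pmb{u} = 0$ handled via $MD_+ + D_-^T M = 0$ exactly as in the paper. Your write-up is in fact more explicit about the individual cancellations (the algebraic $\pmb{v}$--$\pmb{w}$ pairing, the $\tau$-weighting, and which operator sits in which equation) than the paper's terse proof.
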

\begin{proof}
    Conservation of the linear invariant follows as in the proof of
    Theorem~\ref{thm:KdV-semidiscrete}. The quadratic invariant is conserved
    since
    \begin{equation}
    \begin{aligned}
        &\quad
        \frac{1}{2} \partial_t \pmb{1}^T M \left( \pmb{u}^2 + \tau \pmb{v}^2 + \tau \pmb{w}^2 \right)
        =
        \pmb{u}^T M \partial_t \pmb{u}
        + \tau \pmb{v}^T M \partial_t \pmb{v}
        + \tau \pmb{w}^T M \partial_t \pmb{w}
        \\
        &=
        - \frac{1}{3} \pmb{u}^T M D \pmb{u}^2
        - \frac{1}{3} \pmb{u}^T M \pmb{u} D \pmb{u}
        - \pmb{u}^T M D_+ \pmb{w}
        + \pmb{v}^T M D \pmb{v}
        - \pmb{v}^T M \pmb{w}
        - \pmb{w}^T M D_- \pmb{u}
        + \pmb{w}^T M \pmb{v}
        \\
        &=
        - \frac{1}{3} \pmb{u}^T M D \pmb{u}^2
        - \frac{1}{3} (\pmb{u}^2)^T M D \pmb{u}
        - \pmb{u}^T M D_+ \pmb{w}
        + \pmb{v}^T M D \pmb{v}
        + \pmb{w}^T D_+^T M \pmb{u}
        =
        0,
    \end{aligned}
    \end{equation}
    where we have used that $M$ is diagonal and the SBP properties,
    in particular the skew-symmetry of $M D$.
\end{proof}

Formally taking the limit $\tau \to 0$, the semidiscretization
\eqref{eq:KdVH-semidiscrete} of the KdVH system \eqref{kdvH} converges
to the semidiscretization \eqref{eq:KdV-semidiscrete} of the KdV equation
\eqref{kdv}. Moreover, the quadratic invariant of the KdVH semidiscretization
converges to the quadratic invariant of the KdV semidiscretization. We can use this
to prove fully-discrete counterparts of Theorems~\ref{Th:AP_result_type_I} and
\ref{Th:AP_result_type_II}. Thus, we introduce the splitting
\begin{equation}
\label{eq:KdVH-semidiscrete-split}
    \underbrace{\partial_t \begin{pmatrix}
        \pmb{u} \\
        \pmb{v} \\
        \pmb{w}
    \end{pmatrix}}_{\partial_t \pmb{q}}
    =
    \underbrace{\begin{pmatrix}
        -\frac{1}{3} \left( D \pmb{u}^2 + \pmb{u} D \pmb{u} \right) \\
        \pmb{0} \\
        \pmb{0}
    \end{pmatrix}}_{f(\pmb{q})}
    +
    \underbrace{\begin{pmatrix}
        -D_+ \pmb{w} \\
        \frac{1}{\tau} (D \pmb{v} - \pmb{w}) \\
        \frac{1}{\tau} (-D_- \pmb{u} + \pmb{v})
    \end{pmatrix}}_{g(\pmb{q})}.
\end{equation}

\begin{thm}\label{Th:AP_result_type_I_fully_discrete}
    An ImEx-RK method of type I applied to the splitting \eqref{eq:KdVH-semidiscrete-split}
    of the semidiscrete hyperbolic approximation of the KdV equation is always AP for the
    $\pmb{u}$-component. Furthermore, if the method is assumed to be globally stiffly accurate,
    then it is also AP for the auxiliary components $\pmb{v}$ and $\pmb{w}$.
\end{thm}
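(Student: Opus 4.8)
The plan is to follow the proof of Theorem~\ref{Th:AP_result_type_I} line by line, with the continuous derivative $\partial_x$ replaced by the discrete SBP operators prescribed in the splitting \eqref{eq:KdVH-semidiscrete-split}. First I would write the type~I stage equations \eqref{Eq:ImEx_mthd_stage_type_I} in component form for the stage vectors $\pmb{u}$, $\pmb{v}$, $\pmb{w}$ (each stage now being a grid vector, so the stage-coupling matrices $\tIAex$, $\tIAim$ act on the stage index while $D$, $D_\pm$ act on the spatial index and commute with them). This gives the discrete analogues of \eqref{Eq:stage_u_type_I}--\eqref{Eq:stage_w_type_I}, namely $\pmb{u} = u^n\pmb{e} + \Delta t(-\tIAex \tfrac{1}{3}(D\pmb{u}^2 + \pmb{u}D\pmb{u}) - \tIAim D_+\pmb{w})$, together with $\pmb{v} = v^n\pmb{e} + \tfrac{\Delta t}{\tau}\tIAim(D\pmb{v}-\pmb{w})$ and $\pmb{w} = w^n\pmb{e} + \tfrac{\Delta t}{\tau}\tIAim(-D_-\pmb{u}+\pmb{v})$.

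Next I would insert the Hilbert expansions \eqref{hilbert} and match powers of $\tau$. The $\mathcal{O}(\tau^{-1})$ balance in the $\pmb{v}$- and $\pmb{w}$-equations, combined with invertibility of $\tIAim$, yields the discrete equilibrium relations $\pmb{v}_0 = D_-\pmb{u}_0$ and $\pmb{w}_0 = D\pmb{v}_0$, the exact counterpart of \eqref{Eq:Aux_var_rel_type_I}. The crucial step is then the $\mathcal{O}(\tau^0)$ balance of the $\pmb{u}$-stage equation: substituting $\pmb{w}_0 = D\pmb{v}_0 = D D_-\pmb{u}_0$, the implicit term becomes $-\tIAim D_+\pmb{w}_0 = -\tIAim\, D_+ D D_-\,\pmb{u}_0$, so that the leading-order stage and update equations read $\pmb{u}_0 = u_0^n\pmb{e} + \Delta t(-\tIAex\tfrac{1}{3}(D\pmb{u}_0^2+\pmb{u}_0 D\pmb{u}_0) - \tIAim D_+ D D_-\pmb{u}_0)$ and the corresponding update with $\tIbex$, $\tIbim$. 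This is precisely the type~I ImEx-RK method applied to the semidiscrete KdV scheme \eqref{eq:KdV-semidiscrete}, with the split nonlinear term explicit and $D_+ D D_-\pmb{\ukdv}$ implicit, which establishes the AP property for the $\pmb{u}$-component.

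For the auxiliary components under the GSA assumption, the argument transfers verbatim: inverting $\tIAim$ in the $\pmb{v}$- and $\pmb{w}$-stage equations and substituting into the updates gives $v^{n+1} = \tIbim^T\tIAim^{-1}\pmb{v}$ and $w^{n+1} = \tIbim^T\tIAim^{-1}\pmb{w}$, and GSA forces $\tIbim^T\tIAim^{-1} = [0,\dots,0,1]$, hence $v^{n+1}=v^{(s)}$ and $w^{n+1}=w^{(s)}$; at leading order these equal $D_-\pmb{u}_0^{(s)}$ and $D D_-\pmb{u}_0^{(s)}$, and GSA identifies $\pmb{u}_0^{(s)}$ with $\pmb{u}_0^{n+1}$, so the auxiliary limits match $D_-$ and $D D_-$ applied to the KdV update. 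The only point requiring genuine care---the main obstacle---is verifying that the specific assignment of upwind operators in the splitting ($D_+$ on $\pmb{w}$ in the first equation, $D$ on $\pmb{v}$ in the second, $D_-$ on $\pmb{u}$ in the third) composes to exactly the third-derivative operator $D_+ D D_-$ appearing in \eqref{eq:KdV-semidiscrete}. Any other choice would still yield a discretization of KdV in the limit, but not the particular energy-conserving SBP scheme of Theorem~\ref{thm:KdV-semidiscrete}; confirming this matching is what guarantees the limiting scheme $P^0_h$ is a consistent and stable discretization of $P^0$, as required by the AP definition.
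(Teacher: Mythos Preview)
Your proposal is correct and is precisely what the paper intends: it explicitly omits the proof, stating that it is analogous to that of Theorem~\ref{Th:AP_result_type_I}, and your line-by-line translation (replacing $\partial_x$ by the appropriate SBP operators and noting that the implicit term collapses to $D_+ D D_-\pmb{u}_0$, matching the KdV semidiscretization \eqref{eq:KdV-semidiscrete}) is exactly that analogous argument. The one point you rightly flag as requiring care---that the particular upwind assignment in \eqref{eq:KdVH-semidiscrete-split} composes to the energy-conserving operator $D_+ D D_-$ of \eqref{eq:KdV-semidiscrete}---is the key structural observation, and you have it correctly.
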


\begin{thm}\label{Th:AP_result_type_II_fully_discrete}
    A globally stiffly accurate ImEx-RK method of type II, applied to the splitting
    \eqref{eq:KdVH-semidiscrete-split} of the semidiscrete hyperbolic approximation of
    the KdV equation, along with well-prepared initial data, is asymptotic-preserving
    for all the components $\pmb{u}$, $\pmb{v}$, and $\pmb{w}$.
\end{thm}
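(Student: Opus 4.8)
The plan is to follow the proof of Theorem~\ref{Th:AP_result_type_II} essentially verbatim, replacing the continuous derivative $\partial_x$ throughout by the discrete SBP operators appearing in the splitting \eqref{eq:KdVH-semidiscrete-split}, namely $D$ in the $\pmb{v}$-equation, $D_-$ in the $\pmb{w}$-equation, and $D_+$ in the $\pmb{u}$-equation. First I would write out the stage equations and the final update for a globally stiffly accurate type II method applied to \eqref{eq:KdVH-semidiscrete-split} in component form, exactly as in \eqref{Eq:stage_u}--\eqref{Eq:stage_w} and \eqref{Eq:sol_u}--\eqref{Eq:sol_w}, and insert the Hilbert expansions \eqref{hilbert} for $\pmb{u}$, $\pmb{v}$, $\pmb{w}$ together with the analogous expansions for the stage vectors.

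Next I would collect the $\mathcal{O}(\tau^{-1})$ terms of the $\pmb{v}$- and $\pmb{w}$-stage equations. Invoking the discrete well-preparedness of the data at $t_n$ --- which here reads $\pmb{v}_0^n = D_-\pmb{u}_0^n$ and $\pmb{w}_0^n = D D_-\pmb{u}_0^n$, the discrete analogue of \eqref{Eq:well-prepared_IC} with $\partial_x$ replaced by the operators actually used --- the contributions proportional to $\tIIaim$ vanish, leaving $\tIIAim(D\pmb{v}_0 - \pmb{w}_0) = \pmb{0}$ and $\tIIAim(-D_-\pmb{u}_0 + \pmb{v}_0) = \pmb{0}$. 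Since $\tIIAim$ is invertible, this yields the discrete equilibrium relations $\pmb{v}_0 = D_-\pmb{u}_0$ and $\pmb{w}_0 = D\pmb{v}_0 = D D_-\pmb{u}_0$, the exact counterpart of \eqref{Eq:Aux_var_rel}.

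I would then substitute these relations into the $\mathcal{O}(\tau^0)$ terms of the $\pmb{u}$-stage equation and update. The key verification is that the implicit term $D_+\pmb{w}_0$ becomes $D_+ D D_-\pmb{u}_0$, which is precisely the upwind third-derivative approximation used in the KdV semidiscretization \eqref{eq:KdV-semidiscrete}; likewise the explicit term reproduces $\tfrac{1}{3}(D\pmb{u}_0^2 + \pmb{u}_0 D\pmb{u}_0)$. Hence the leading-order $\pmb{u}$-scheme coincides with the type II method applied to \eqref{eq:KdV-semidiscrete} under the same ImEx splitting, establishing the AP property for $\pmb{u}$. For the auxiliary variables I would reuse the purely algebraic GSA identity $\tIIbim^T \tIIAim^{-1} = [0,\dots,0,1]$ to obtain $\pmb{v}^{n+1} = \pmb{v}^{(s)}$ and $\pmb{w}^{n+1} = \pmb{w}^{(s)}$; evaluating at leading order and using the equilibrium relations together with $u_0^{(s)} = u_0^{n+1}$ gives $\pmb{v}_0^{n+1} = D_-\pmb{u}_0^{n+1}$ and $\pmb{w}_0^{n+1} = D D_-\pmb{u}_0^{n+1}$, so the updated data is again well-prepared and an induction on $n$ closes the argument, exactly as in Remark~\ref{rem:req-GSA}.

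The main obstacle --- though it is a verification rather than a genuine difficulty --- is the bookkeeping of the three distinct operators $D$, $D_+$, $D_-$ so that their composition on the equilibrium manifold reproduces exactly the operator $D_+ D D_-$ built into \eqref{eq:KdV-semidiscrete}. This is what makes the lower square of the diagram in Figure~\ref{fig:AP} commute at the fully discrete level; had the operators been assigned to the three equations in any other pattern, the $\tau \to 0$ limit would produce a different third-derivative stencil, breaking consistency with the designed KdV scheme and, generically, its conservation property. Everything else --- invertibility of $\tIIAim$, the GSA projection, and the induction maintaining well-preparedness --- transfers unchanged from the semidiscrete proof of Theorem~\ref{Th:AP_result_type_II}, since those steps are algebraic and independent of the spatial discretization.
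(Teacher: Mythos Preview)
Your proposal is correct and is precisely the approach the paper indicates: the paper omits the proof, stating only that it is analogous to the proof of Theorem~\ref{Th:AP_result_type_II} with the continuous derivatives replaced by the discrete SBP operators in \eqref{eq:KdVH-semidiscrete-split}. Your explicit tracking of how $D_-$, $D$, and $D_+$ compose on the equilibrium manifold to reproduce the third-derivative operator $D_+ D D_-$ of \eqref{eq:KdV-semidiscrete}, and the corresponding discrete notion of well-preparedness, fills in exactly the bookkeeping the paper leaves implicit.
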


The proofs of Theorems~\ref{Th:AP_result_type_I_fully_discrete} and
\ref{Th:AP_result_type_II_fully_discrete} are analogous to the proofs of
Theorems~\ref{Th:AP_result_type_I} and \ref{Th:AP_result_type_II}. Thus,
we omit them here.

\subsection{Fully-discrete energy conservation via RK relaxation}
To extend energy conservation from the semidiscrete to the fully-discrete level,
we utilize the entropy relaxation Runge-Kutta technique \cite{ketcheson2019relaxation,ranocha2020relaxation,ranocha2020general}.
This approach modifies standard RK methods
to conserve a single invariant for ODE systems. Applications of this entropy relaxation approach in ImEx time integration for
conserving single or multiple invariants have been explored in
\cite{biswas2023accurate,biswas2023multiple,li2022implicit}.
Here, we focus on conserving a single invariant at the fully-discrete level using ImEx RK methods.
The energy-conserving spatial semidiscretization of the KdVH or the original KdV equation reduces the problem to an
ODE system for $q \in \mathbb{R}^{m}$, given by
\begin{align*}
  q_t = f(q) + g(q), \quad q(0) = q^0,
\end{align*}
with the energy invariant denoted by $I(q)$.
Using an ImEx method \eqref{eq:ImEx_mthd}, we require $I(q^{n+1}) = I(q^n) = I(q^0)$
at the discrete level.
However, standard methods generally do not satisfy $I(q^{n+1}) = I(q^n)$.
To address this, the entropy relaxation approach introduces
a scalar entropy relaxation parameter, $\gamma_n$, modifying the solution update as
\begin{align*}
    q(t_n + \gamma_n \Delta t)
    \approx
    q^{n+1}_{\gamma_n}
    =
    q^n + \gamma_n \Delta t \left( q^{n+1} - q^{n} \right),
\end{align*}
with $\gamma_n$ chosen to satisfy the nonlinear scalar equation
\begin{align*}
I(q^{n+1}_{\gamma_n}) = I(q^n).
\end{align*}
Under certain mild conditions, $\gamma_n = 1 + \mathcal{O}(\Delta t^{p-1})$
exists and can be determined at each time step for a general nonlinear
invariant $I$, where $p$ is the order of the ImEx method
\cite{ketcheson2019relaxation,ranocha2020general}.
For specific invariants, such as energy represented by a solution norm, explicit
formulas for $\gamma_n$ are available \cite{ketcheson2019relaxation,ranocha2020general}.
For an ImEx RK method with Butcher coefficients given in \eqref{Mthd:ImEx-RK}, the explicit formula for $\gamma_n$ is given by
\begin{align}
    \gamma_n & = \frac{-2 \  \langle q^n, \ \Delta t \sum_{j=1}^{s} (\tilde{b}_j f_j + b_j g_j)   \rangle}{\langle\Delta t \sum_{j=1}^{s} (\tilde{b}_j f_j + b_j g_j), \ \Delta t \sum_{j=1}^{s} (\tilde{b}_j f_j + b_j g_j)  \rangle} \;,
\end{align}
where $f_j = f(q^{(j)})$ and $g_j = g(q^{(j)})$, are the function evaluations of $f$ and $g$ at the $j$th stage solution $q^{(j)}$.
By constructing the entropy relaxation parameter,
the modified solution preserves the invariant of the semidiscretized system, thereby ensuring the preservation of
the invariant in the fully-discrete numerical scheme.

In the numerical experiments below, this entropy relaxation technique is used only
where indicated in Section \ref{sec:test-conservation} and not at all in Section \ref{sec:test-AP}.

\section{Numerical experiments} \label{sec:experiments}

In this section, we present the numerical results that validate the effectiveness and accuracy
of the proposed methods for solving the KdVH system.
We study solutions involving one or two solitons;
a step-like initial condition leading to a dispersive
shock wave has already been studied in \cite{besse2022perfectly}.
We have implemented the numerical methods in Julia \cite{bezanson2017julia}.
The spatial semidiscretizations and the Petviashvili method
use SBP operators provided by the package
SummationByPartsOperators.jl \cite{ranocha2021sbp}, wrapping
FFTW \cite{frigo2005design} for Fourier methods.
We use CairoMakie.jl \cite{danisch2021makie} to visualize the results.
The code to reproduce all numerical experiments is available in
our reproducibility repository \cite{biswas2024travelingRepro}.

\subsection{Numerical tests of asymptotic preservation} \label{sec:test-AP}
We begin by comparing the solution of the
KdV equation with those of the KdVH system for different values of $\tau$. Specifically, we
consider the KdV and the KdVH system on the domain $[x_{L}, x_{R}] \times (0, T] = [-40, 40] \times (0, 80]$
with periodic boundary conditions. We consider approximation of the soliton
solution of the KdV equation:
\begin{align}
    \ukdv(x,t) = A \sech^2\left(\frac{\sqrt{3A}(x-ct)}{6}\right).
\end{align}
We use well-prepared initial data, meaning that we set
$u(x,0) = \ukdv(x,0)$, $\pmb{v}(0) = D_- \pmb{\ukdv}(0)$ and $\pmb{w}(0) = D D_- \pmb{\ukdv}(0)$,
where $D_\pm$ and $D$ are upwind SBP derivative operators.

We discretize the spatial derivatives in both the KdV and KdVH equations
using periodic FD SBP operators with $N = 2^8$ grid points.  The semi-discrete system is
integrated in time using the type II ARK method ARS(4,4,3) with a time step size of $\Delta t = 0.01$.
With these fixed spatial and temporal parameters, Figure \ref{fig:Kdvh_convg_kdv_ARS443} illustrates
the convergence of the KdVH solution to that of the KdV equation as the relaxation
parameter $\tau$ decreases.

\begin{figure}[ht]
    \centering
    \includegraphics[width=4in]{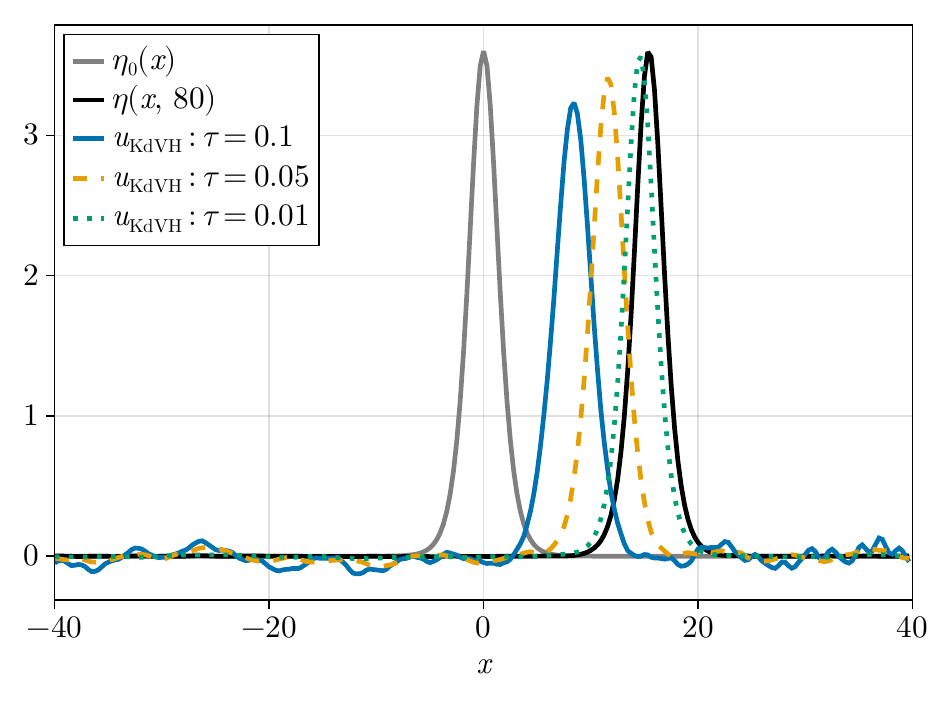}
    \caption{Comparison of the solution of the KdV equation \eqref{kdv} (denoted by $\ukdv(x,80)$) and its hyperbolic
    approximation \eqref{kdvH} (denoted by '$u_\mathrm{KdVH}: \tau$') at time $t=80$. In all cases, the solution is computed using a spatial
    discretization based on periodic SBP operators and the ARK method ARS(4,4,3) in time. The numerical solutions of
    the KdVH system converge to the numerical solution of the KdV equation as $\tau \to 0$.}
    \label{fig:Kdvh_convg_kdv_ARS443}
\end{figure}

To quantitatively validate the AP property of the schemes developed in Section \ref{sec:AP-analysis},
we examine the $\ell_2$ norm induced by the mass/norm matrix $M$ of the SBP
operator used in space of the differences $\pmb{u} - \pmb{\ukdv}$, $\pmb{v} - D_- \pmb{\ukdv}$,
and $\pmb{w} - D D_-\pmb{\ukdv}$ , where $\pmb{u}$, $\pmb{v}$, and $\pmb{w}$
represent the solutions of the fully discretized
KdVH system, and $\pmb{\ukdv}$ is a numerical solution of the KdV equation.
 For the numerical solutions of both the KdV and KdVH equations,
we use spatial discretization based on periodic SBP operators on the domain $[-40,40]$ with $2^{10}$ grid points.
Below, we present the asymptotic errors for various ImEx time integrators, with four methods from each of type I and type II.
In all cases, the time integrators use a fixed step size $\Delta t = 0.005$, and
all errors are computed with respect to the numerical solution of the KdV equation at the final time $t = 16.67$.

\subsubsection{AP results for type I ImEx-RK methods}
We begin with type I methods, denoted by NAME($s_E, s_I, p$), where the triplet
$(s_E, s_I, p)$ specifies the number of stages in the explicit part ($s_E$),
the number of stages in the implicit part ($s_I$), and the overall order ($p$)
of the ImEx-RK method.

First, we consider a second-order type I method, denoted by SSP2-ImEx(2,2,2),
as defined in Table~\ref{tab:SSP2ImEx222}. This method has an implicit part that is
not SA and an explicit part that is not FSAL, meaning it is not GSA. However,
the overall method is $L$-stable \cite{boscarino2024asymptotic}. Table \ref{Table:SSP2-222_errors_num}
presents the asymptotic errors in different
variables. The scheme exhibits linear convergence of $\pmb{u}$
as $\tau \to 0$, confirming the AP property for the $\pmb{u}$-component, while for the variables $\pmb{v}$
and $\pmb{w}$, we do not observe the AP property of the method. This observation is in agreement
with Theorem \ref{Th:AP_result_type_I} and Remark \ref{rem:no-GSA}, since this method does not have the GSA property.

 \begin{table}[htbp]
    \centering
    \caption{Asymptotic errors and estimated orders of convergence (EOC) for the variables $\pmb{u}$, $\pmb{v}$, and $\pmb{w}$ when integrating
    the KdVH system using the SSP2-ImEx(2,2,2) method. The $\ell_2$ norms of the errors are calculated relative to
    the numerical solution of the KdV equation, $\pmb{\ukdv}$.}
    \label{Table:SSP2-222_errors_num}
    \begin{tabular}{ccccccc}
      \hline
      \hline
      \multicolumn{1}{c}{$\tau$} &
      \multicolumn{1}{c}{$||\pmb{u}-\pmb{\ukdv}||_2$} &
      \multicolumn{1}{c}{EOC $\pmb{u}$} &
      \multicolumn{1}{c}{$||\pmb{v}-D_- \pmb{\ukdv}||_2$} &
      \multicolumn{1}{c}{EOC $\pmb{v}$} &
      \multicolumn{1}{c}{$||\pmb{w}-D D_- \pmb{\ukdv}||_2$} &
      \multicolumn{1}{c}{EOC $\pmb{w}$} \\\hline
      1.00e-01 & 3.77e+00 &  & 3.13e+00 &  & 3.61e+00 &  \\
      1.00e-03 & 5.35e-02 & 0.92 & 4.47e-02 & 0.92 & 5.55e-02 & 0.91 \\
      1.00e-05 & 5.37e-04 & 1.00 & 8.96e-03 & 0.35 & 1.63e-02 & 0.27 \\
      1.00e-07 & 5.37e-06 & 1.00 & 9.51e-03 & -0.01 & 1.69e-02 & -0.01 \\
      1.00e-09 & 5.39e-08 & 1.00 & 9.52e-03 & -0.00 & 1.69e-02 & -0.00 \\
    \end{tabular}
\end{table}

We now examine another second-order type I method, SSP2-ImEx(3,3,2), as defined in Table~\ref{tab:SSP2ImEx332}.
This method has an implicit part that is SA and an explicit part that does not satisfy FSAL, which means
it does not satisfy the GSA property. However, the overall method remains $L$-stable due to the SA property of the
implicit part \cite{boscarino2024asymptotic}. Table~\ref{Table:SSP2-332_errors_num} shows the asymptotic
errors for different variables. The convergence for the $\pmb{u}$ variable demonstrates linear convergence
as $\tau \to 0$, indicating the AP property for the $\pmb{u}$-component. Additionally, compared to the
SSP2-ImEx(2,2,2) method, the SSP2-ImEx(3,3,2) method shows improved convergence for the
algebraic variables $\pmb{v}$ and $\pmb{w}$, despite both methods lacking the GSA property. We hypothesize that this improvement arises
because the SSP2-ImEx(3,3,2) method only violates the FSAL property.

\begin{table}[htbp]
    \centering
    \caption{Asymptotic errors and estimated orders of convergence (EOC) for the variables $\pmb{u}$, $\pmb{v}$, and $\pmb{w}$ when integrating
    the KdVH system using the SSP2-ImEx(3,3,2) method. The $\ell_2$ norms of the errors are calculated relative to
    the numerical solution of the KdV equation, $\pmb{\ukdv}$.}
    \label{Table:SSP2-332_errors_num}
    \begin{tabular}{ccccccc}
      \hline
      \hline
      \multicolumn{1}{c}{$\tau$} &
      \multicolumn{1}{c}{$||\pmb{u}-\pmb{\ukdv}||_2$} &
      \multicolumn{1}{c}{EOC $\pmb{u}$} &
      \multicolumn{1}{c}{$||\pmb{v}-D_- \pmb{\ukdv}||_2$} &
      \multicolumn{1}{c}{EOC $\pmb{v}$} &
      \multicolumn{1}{c}{$||\pmb{w}-D D_- \pmb{\ukdv}||_2$} &
      \multicolumn{1}{c}{EOC $\pmb{w}$} \\\hline
      1.00e-01 & 3.77e+00 &  & 3.13e+00 &  & 3.61e+00 &  \\
      1.00e-03 & 5.35e-02 & 0.92 & 4.88e-02 & 0.90 & 6.31e-02 & 0.88 \\
      1.00e-05 & 5.36e-04 & 1.00 & 4.92e-04 & 1.00 & 6.46e-04 & 0.99 \\
      1.00e-07 & 5.36e-06 & 1.00 & 1.23e-05 & 0.80 & 2.61e-05 & 0.70 \\
      1.00e-09 & 5.38e-08 & 1.00 & 1.20e-05 & 0.01 & 2.62e-05 & -0.00 \\
    \end{tabular}
\end{table}

Now we consider a second-order type I method AGSA(3,4,2) given in Table~\ref{tab:AGSA342}
that satisfies the GSA property. Table~\ref{Table:AGSA-342_errors_num} shows that we obtain AP
properties for all the variables $\pmb{u}$, $\pmb{v}$, and $\pmb{w}$, supporting our theoretical results for
type I methods with the GSA property.

\begin{table}[htbp]
    \centering
    \caption{Asymptotic errors and estimated orders of convergence (EOC) for the variables $\pmb{u}$, $\pmb{v}$, and $\pmb{w}$ when integrating
    the KdVH system using the AGSA(3,4,2) method. The $\ell_2$ norms of the errors are calculated relative to the
    numerical solution of the KdV equation, $\pmb{\ukdv}$.}
    \label{Table:AGSA-342_errors_num}
    \begin{tabular}{ccccccc}
      \hline
      \hline
      \multicolumn{1}{c}{$\tau$} &
      \multicolumn{1}{c}{$||\pmb{u}-\pmb{\ukdv}||_2$} &
      \multicolumn{1}{c}{EOC $\pmb{u}$} &
      \multicolumn{1}{c}{$||\pmb{v}-D_- \pmb{\ukdv}||_2$} &
      \multicolumn{1}{c}{EOC $\pmb{v}$} &
      \multicolumn{1}{c}{$||\pmb{w}-D D_- \pmb{\ukdv}||_2$} &
      \multicolumn{1}{c}{EOC $\pmb{w}$} \\\hline
      1.00e-01 & 3.80e+00 &  & 3.16e+00 &  & 3.63e+00 &  \\
      1.00e-03 & 5.93e-02 & 0.90 & 5.44e-02 & 0.88 & 7.12e-02 & 0.85 \\
      1.00e-05 & 5.94e-04 & 1.00 & 5.50e-04 & 1.00 & 7.32e-04 & 0.99 \\
      1.00e-07 & 5.94e-06 & 1.00 & 5.49e-06 & 1.00 & 7.30e-06 & 1.00 \\
      1.00e-09 & 5.96e-08 & 1.00 & 7.93e-08 & 0.92 & 9.60e-08 & 0.94 \\
    \end{tabular}
\end{table}

To test a high-order method of type I, we consider a third-order $L$-stable type I method SSP3-ImEx(3,4,3) given in
Table~\ref{tab:SSP3ImEx343}, which is neither SA in the implicit part nor FSAL in the explicit part.
Since this method does not possess the GSA property, we only expect the AP property in the $\pmb{u}$-component, as shown in
Table~\ref{Table:SSP3-343_errors_num}.

\begin{table}[htbp]
    \centering
    \caption{Asymptotic errors and estimated orders of convergence (EOC) for the variables $\pmb{u}$, $\pmb{v}$, and $\pmb{w}$ when integrating
    the KdVH system using the SSP3-ImEx(3,4,3) method. The $\ell_2$ norms of the errors are calculated relative to
    the numerical solution of the KdV equation, $\pmb{\ukdv}$.}
    \label{Table:SSP3-343_errors_num}
    \begin{tabular}{ccccccc}
      \hline
      \hline
      \multicolumn{1}{c}{$\tau$} &
      \multicolumn{1}{c}{$||\pmb{u}-\pmb{\ukdv}||_2$} &
      \multicolumn{1}{c}{EOC $\pmb{u}$} &
      \multicolumn{1}{c}{$||\pmb{v}-D_- \pmb{\ukdv}||_2$} &
      \multicolumn{1}{c}{EOC $\pmb{v}$} &
      \multicolumn{1}{c}{$||\pmb{w}-D D_- \pmb{\ukdv}||_2$} &
      \multicolumn{1}{c}{EOC $\pmb{w}$} \\\hline
      1.00e-01 & 3.77e+00 &  & 3.13e+00 &  & 3.60e+00 &  \\
      1.00e-03 & 5.34e-02 & 0.92 & 4.88e-02 & 0.90 & 6.25e-02 & 0.88 \\
      1.00e-05 & 5.36e-04 & 1.00 & 3.73e-03 & 0.56 & 7.22e-03 & 0.47 \\
      1.00e-07 & 5.36e-06 & 1.00 & 3.82e-03 & -0.01 & 6.80e-03 & 0.01 \\
      1.00e-09 & 5.38e-08 & 1.00 & 3.83e-03 & -0.00 & 6.81e-03 & -0.00 \\
    \end{tabular}
\end{table}

\subsubsection{AP results for type II ImEx-RK methods}
In this section, we present the quantitative asymptotic errors for different type II methods.
First, we consider a second-order ARS(2,2,2) method (in Table~\ref{tab:ARS222}) and a third-order ARS(4,4,3)
method (in Table~\ref{tab:ARS443}), both of which are type II methods that satisfy the GSA property.
We simulate the semi-discretized system with well-prepared initial data and present the asymptotic errors in
Table~\ref{Table:ARS222_errors_num} and Table~\ref{Table:ARS443_errors_num}, respectively, for these two methods.
The convergence rates in these tables illustrate the AP property for all the components with these two methods,
as expected according to our theoretical results.

\begin{table}[htbp]
\centering
\caption{Asymptotic errors and estimated orders of convergence (EOC) for the variables $\pmb{u}$, $\pmb{v}$,
 and $\pmb{w}$ when integrating the KdVH system using the ARS(2,2,2) method. The $\ell_2$ norms
  of the errors are calculated relative to the numerical solution of the KdV equation, $\pmb{\ukdv}$.}
\label{Table:ARS222_errors_num}
\begin{tabular}{ccccccc}
  \hline
  \hline
  \multicolumn{1}{c}{$\tau$} &
  \multicolumn{1}{c}{$||\pmb{u}-\pmb{\ukdv}||_2$} &
  \multicolumn{1}{c}{EOC $\pmb{u}$} &
  \multicolumn{1}{c}{$||\pmb{v}-D_- \pmb{\ukdv}||_2$} &
  \multicolumn{1}{c}{EOC $\pmb{v}$} &
  \multicolumn{1}{c}{$||\pmb{w}-D D_- \pmb{\ukdv}||_2$} &
  \multicolumn{1}{c}{EOC $\pmb{w}$} \\\hline
  1.00e-01 & 3.77e+00 &  & 3.13e+00 &  & 3.61e+00 &  \\
  1.00e-03 & 5.35e-02 & 0.92 & 4.88e-02 & 0.90 & 6.31e-02 & 0.88 \\
  1.00e-05 & 5.36e-04 & 1.00 & 4.89e-04 & 1.00 & 6.30e-04 & 1.00 \\
  1.00e-07 & 5.36e-06 & 1.00 & 4.89e-06 & 1.00 & 6.30e-06 & 1.00 \\
  1.00e-09 & 5.38e-08 & 1.00 & 5.01e-08 & 0.99 & 6.44e-08 & 1.00 \\
\end{tabular}
\end{table}

\begin{table}[htbp]
\centering
\caption{Asymptotic errors and estimated orders of convergence (EOC) for the variables $\pmb{u}$, $\pmb{v}$,
 and $\pmb{w}$ when integrating the KdVH system using the ARS(4,4,3) method
 in time and an upwind FD method in space. The $\ell_2$ norms
  of the errors are calculated relative to the numerical solution of the KdV equation, $\pmb{\ukdv}$.}
\label{Table:ARS443_errors_num}
\begin{tabular}{ccccccc}
  \hline
  \hline
  \multicolumn{1}{c}{$\tau$} &
  \multicolumn{1}{c}{$||\pmb{u}-\pmb{\ukdv}||_2$} &
  \multicolumn{1}{c}{EOC $\pmb{u}$} &
  \multicolumn{1}{c}{$||\pmb{v}-D_- \pmb{\ukdv}||_2$} &
  \multicolumn{1}{c}{EOC $\pmb{v}$} &
  \multicolumn{1}{c}{$||\pmb{w}-D D_- \pmb{\ukdv}||_2$} &
  \multicolumn{1}{c}{EOC $\pmb{w}$} \\\hline
  1.00e-01 & 3.76e+00 &  & 3.13e+00 &  & 3.60e+00 &  \\
  1.00e-03 & 5.34e-02 & 0.92 & 4.88e-02 & 0.90 & 6.31e-02 & 0.88 \\
  1.00e-05 & 5.36e-04 & 1.00 & 4.89e-04 & 1.00 & 6.31e-04 & 1.00 \\
  1.00e-07 & 5.36e-06 & 1.00 & 4.89e-06 & 1.00 & 6.31e-06 & 1.00 \\
  1.00e-09 & 5.38e-08 & 1.00 & 5.23e-08 & 0.99 & 6.50e-08 & 0.99 \\
\end{tabular}
\end{table}

To demonstrate that the analysis of the properties is not limited to
finite difference methods in space, we have also considered a DG method
in space. The results for the ARS(4,4,3) method with a DG method using
polynomials of degree $3$ with $2^8$ elements are presented in
Table~\ref{Table:ARS443_DG_errors_num}.
We obtain similar results for Fourier pseudospectral methods,
which are not shown here.

\begin{table}[htbp]
\centering
\caption{Asymptotic errors and estimated orders of convergence (EOC) for the variables $\pmb{u}$, $\pmb{v}$,
 and $\pmb{w}$ when integrating the KdVH system using the ARS(4,4,3) method
 in time and a DG method in space. The $\ell_2$ norms
  of the errors are calculated relative to the numerical solution of the KdV equation, $\pmb{\ukdv}$.}
\label{Table:ARS443_DG_errors_num}
\begin{tabular}{ccccccc}
    \hline
    \hline
    \multicolumn{1}{c}{$\tau$} &
    \multicolumn{1}{c}{$||\pmb{u}-\pmb{\ukdv}||_2$} &
    \multicolumn{1}{c}{EOC $\pmb{u}$} &
    \multicolumn{1}{c}{$||\pmb{v}-D_- \pmb{\ukdv}||_2$} &
    \multicolumn{1}{c}{EOC $\pmb{v}$} &
    \multicolumn{1}{c}{$||\pmb{w}-D D_- \pmb{\ukdv}||_2$} &
    \multicolumn{1}{c}{EOC $\pmb{w}$} \\\hline
    1.00e-01 & 3.76e+00 &  & 3.13e+00 &  & 3.60e+00 &  \\
    1.00e-03 & 5.34e-02 & 0.92 & 4.88e-02 & 0.90 & 6.31e-02 & 0.88 \\
    1.00e-05 & 5.36e-04 & 1.00 & 4.89e-04 & 1.00 & 6.31e-04 & 1.00 \\
    1.00e-07 & 5.36e-06 & 1.00 & 4.89e-06 & 1.00 & 6.31e-06 & 1.00 \\
    1.00e-09 & 5.36e-08 & 1.00 & 6.20e-08 & 0.95 & 6.39e-08 & 1.00 \\
  \end{tabular}
  \end{table}

Tables \ref{Table:ARK3(2)4L[2]SA_errors_num} and \ref{Table:ARK4(3)6L[2]SA_errors_num} present the asymptotic
errors for two methods: the third-order ARK3(2)4L[2]SA (in Table~\ref{tab:ARK324L2SA}) and the fourth-order
ARK4(3)6L[2]SA (in Table~\ref{tab:ARK436L2SA}) of type II,
as proposed by Kennedy and Carpenter \cite{kennedy2003additive}. Both methods do not have the GSA property,
but their implicit parts are SA.
The simulations are performed with well-prepared initial data, and the convergence rates shown in the tables
indicate that we consistently obtain the AP property for the $\pmb{u}$-component. It appears that we observe linear
convergence rates for the algebraic variables within a certain range of $\tau$ values, as shown in the table,
particularly for higher-order ImEx methods. This behavior surpasses the guarantees provided by the
theoretical results for such methods. The observed effect is attributed to the use of a sufficiently
small time step in the simulation. However, with a larger time step,
we observe that the AP property manifests only
in the $\pmb{u}$-component, not in the algebraic variables.

\begin{table}[htbp]
\centering
\caption{Asymptotic errors and estimated orders of convergence (EOC) for the variables $\pmb{u}$, $\pmb{v}$, and $\pmb{w}$ when integrating
the KdVH system using the ARK3(2)4L[2]SA method. The $\ell_2$ norms of the errors are calculated relative to the
numerical solution of the KdV equation, $\pmb{\ukdv}$.}
\label{Table:ARK3(2)4L[2]SA_errors_num}
\begin{tabular}{ccccccc}
  \hline
  \hline
  \multicolumn{1}{c}{$\tau$} &
  \multicolumn{1}{c}{$||\pmb{u}-\pmb{\ukdv}||_2$} &
  \multicolumn{1}{c}{EOC $\pmb{u}$} &
  \multicolumn{1}{c}{$||\pmb{v}-D_- \pmb{\ukdv}||_2$} &
  \multicolumn{1}{c}{EOC $\pmb{v}$} &
  \multicolumn{1}{c}{$||\pmb{w}-D D_- \pmb{\ukdv}||_2$} &
  \multicolumn{1}{c}{EOC $\pmb{w}$} \\\hline
  1.00e-01 & 3.77e+00 &  & 3.13e+00 &  & 3.60e+00 &  \\
  1.00e-03 & 5.34e-02 & 0.92 & 4.88e-02 & 0.90 & 6.32e-02 & 0.88 \\
  1.00e-05 & 5.36e-04 & 1.00 & 4.90e-04 & 1.00 & 6.37e-04 & 1.00 \\
  1.00e-07 & 5.36e-06 & 1.00 & 1.27e-05 & 0.79 & 2.69e-05 & 0.69 \\
  1.00e-09 & 5.38e-08 & 1.00 & 1.24e-05 & 0.01 & 2.71e-05 & -0.00 \\
\end{tabular}
\end{table}

\begin{table}[htbp]
\centering
\caption{Asymptotic errors and estimated orders of convergence (EOC) for the variables $\pmb{u}$, $\pmb{v}$, and $\pmb{w}$ when integrating
the KdVH system using the ARK4(3)6L[2]SA method. The $\ell_2$ norms of the errors are calculated relative to the
numerical solution of the KdV equation, $\pmb{\ukdv}$.}
\label{Table:ARK4(3)6L[2]SA_errors_num}
\begin{tabular}{ccccccc}
  \hline
  \hline
  \multicolumn{1}{c}{$\tau$} &
  \multicolumn{1}{c}{$||\pmb{u}-\pmb{\ukdv}||_2$} &
  \multicolumn{1}{c}{EOC $\pmb{u}$} &
  \multicolumn{1}{c}{$||\pmb{v}-D_- \pmb{\ukdv}||_2$} &
  \multicolumn{1}{c}{EOC $\pmb{v}$} &
  \multicolumn{1}{c}{$||\pmb{w}-D D_- \pmb{\ukdv}||_2$} &
  \multicolumn{1}{c}{EOC $\pmb{w}$} \\\hline
  1.00e-01 & 3.77e+00 &  & 3.13e+00 &  & 3.60e+00 &  \\
  1.00e-03 & 5.34e-02 & 0.92 & 4.88e-02 & 0.90 & 6.31e-02 & 0.88 \\
  1.00e-05 & 5.36e-04 & 1.00 & 4.89e-04 & 1.00 & 6.31e-04 & 1.00 \\
  1.00e-07 & 5.36e-06 & 1.00 & 4.89e-06 & 1.00 & 6.31e-06 & 1.00 \\
  1.00e-09 & 5.38e-08 & 1.00 & 4.97e-08 & 1.00 & 6.35e-08 & 1.00 \\
\end{tabular}
\end{table}

\subsubsection{Asymptotic-accuracy property}
The various classes of ImEx methods employed in the numerical experiments presented here have been proven to satisfy the AP
and AA properties for hyperbolic relaxation systems, as established in \cite{pareschi2005implicit}. Analogous results hold
for the relaxation system considered in this study, as demonstrated in Section~\ref{sec:AP-analysis}. In this section, we numerically investigate
the AA property for different classes of ImEx methods with two values of the relaxation parameter, capturing distinct
regimes of the relaxation limit. Specifically, we select $\tau \in \{10^{-5}, 10^{-9}\}$ and demonstrate error convergence
for each $\tau$ using five methods: AGSA(3,4,2), SSP3-ImEx(3,4,3), ARS(2,2,2), ARS(4,4,3), and ARK3(2)4L[2]SA.

For all experiments, we use an 8th-order periodic first-derivative upwind SBP operator with $N = 2^{10}$ spatial grid
points on the domain $[-40, 40]$ for spatial semi-discretization. The error convergence for the components $\pmb{u}$, $\pmb{v}$,
and $\pmb{w}$ at $t = 4.8$ is illustrated in Figure~\ref{fig:AA_property}. The error at the final time is computed relative
to reference solutions of the KdVH system denoted by $\pmb{u_P}$, $\pmb{v_P}$, and $\pmb{w_P}$, where $\pmb{u_P}$ is obtained using a Petviashvili-type
method on a fine spatial grid with $2^{11}$ grid points. The reference solution for the auxiliary variable
$\pmb{w_P} = c \pmb{u_P} - \frac{\pmb{u_P}^2}{2}$ is derived by integrating \eqref{Eq:kdvh_wave_ode_a}, and $\pmb{v_P} = D_{-}\pmb{u_P} - c \tau D_{-} \pmb{w_P}$ is
obtained by using \eqref{Eq:kdvh_wave_ode_c}.

In accordance with our theoretical results, the methods AGSA(3,4,2), ARS(2,2,2) and ARS(4,4,3) confirm the AA property
for all components. The method SSP3-ImEx(3,4,3) exhibits the AA property for the $\pmb{u}$-component only but not for the auxiliary
components, aligning with theoretical predictions. Additionally, we observe the AA property for the $\pmb{u}$-component with
the ARK3(2)4L[2]SA method, despite this property not being theoretically guaranteed for this method, a behavior similar
to its AP property.

\begin{figure}[htbp]
    \centering
    \begin{subfigure}[b]{\textwidth}
        \centering
        \includegraphics[height=0.30\textheight]{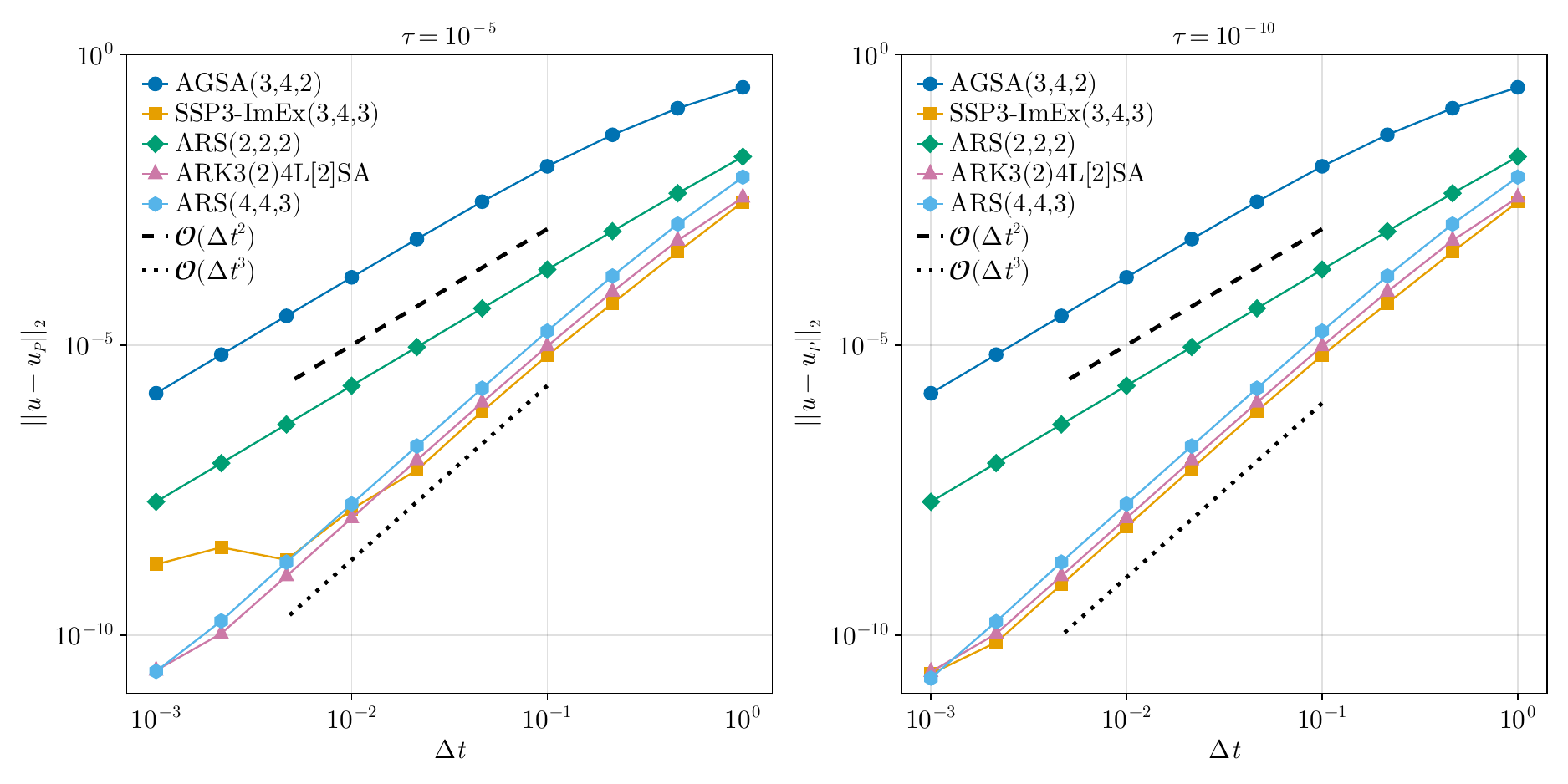}
    \end{subfigure}

    \begin{subfigure}[b]{\textwidth}
        \centering
        \includegraphics[height=0.30\textheight]{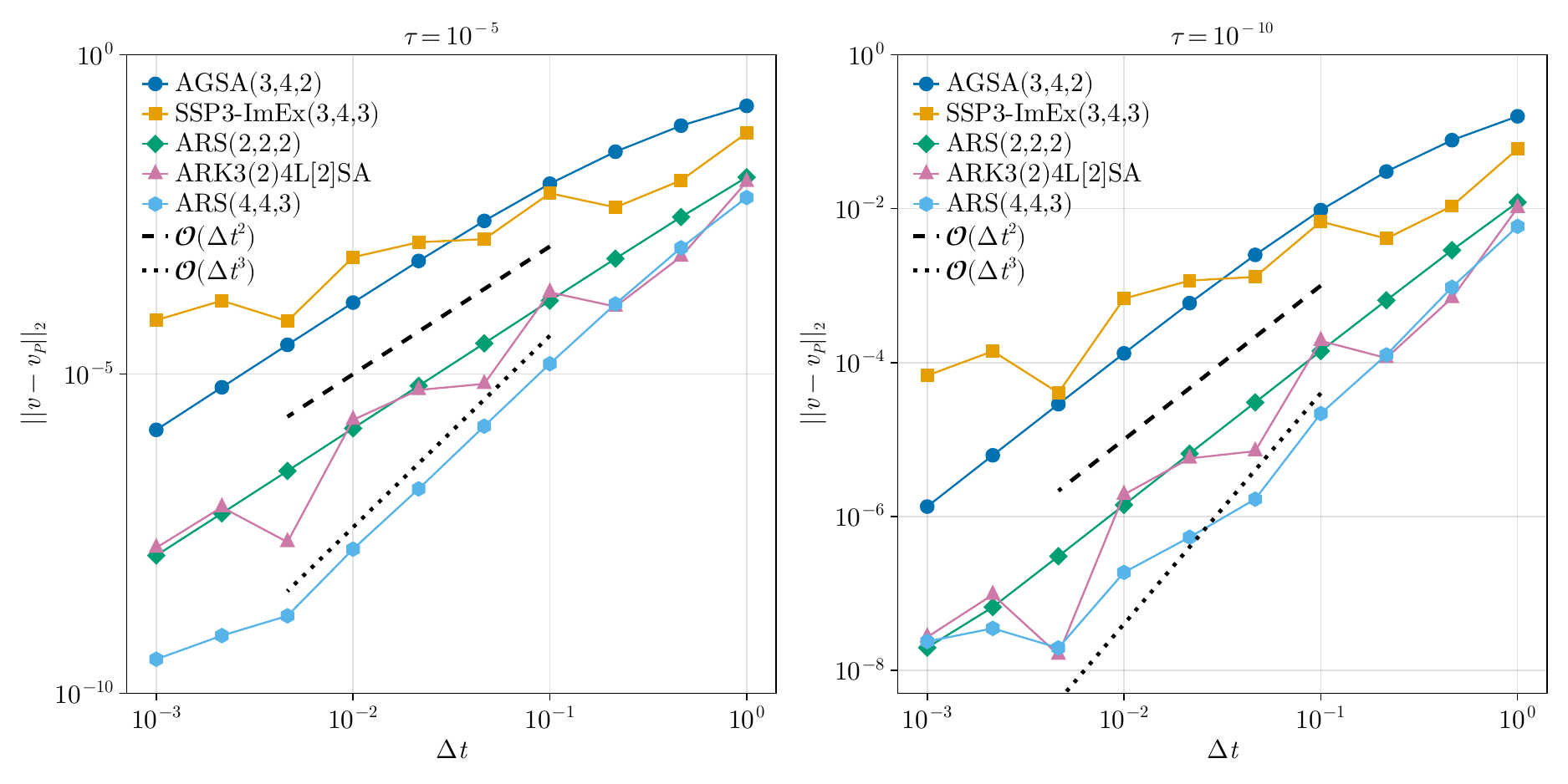}
    \end{subfigure}

    \begin{subfigure}[b]{\textwidth}
        \centering
        \includegraphics[height=0.30\textheight]{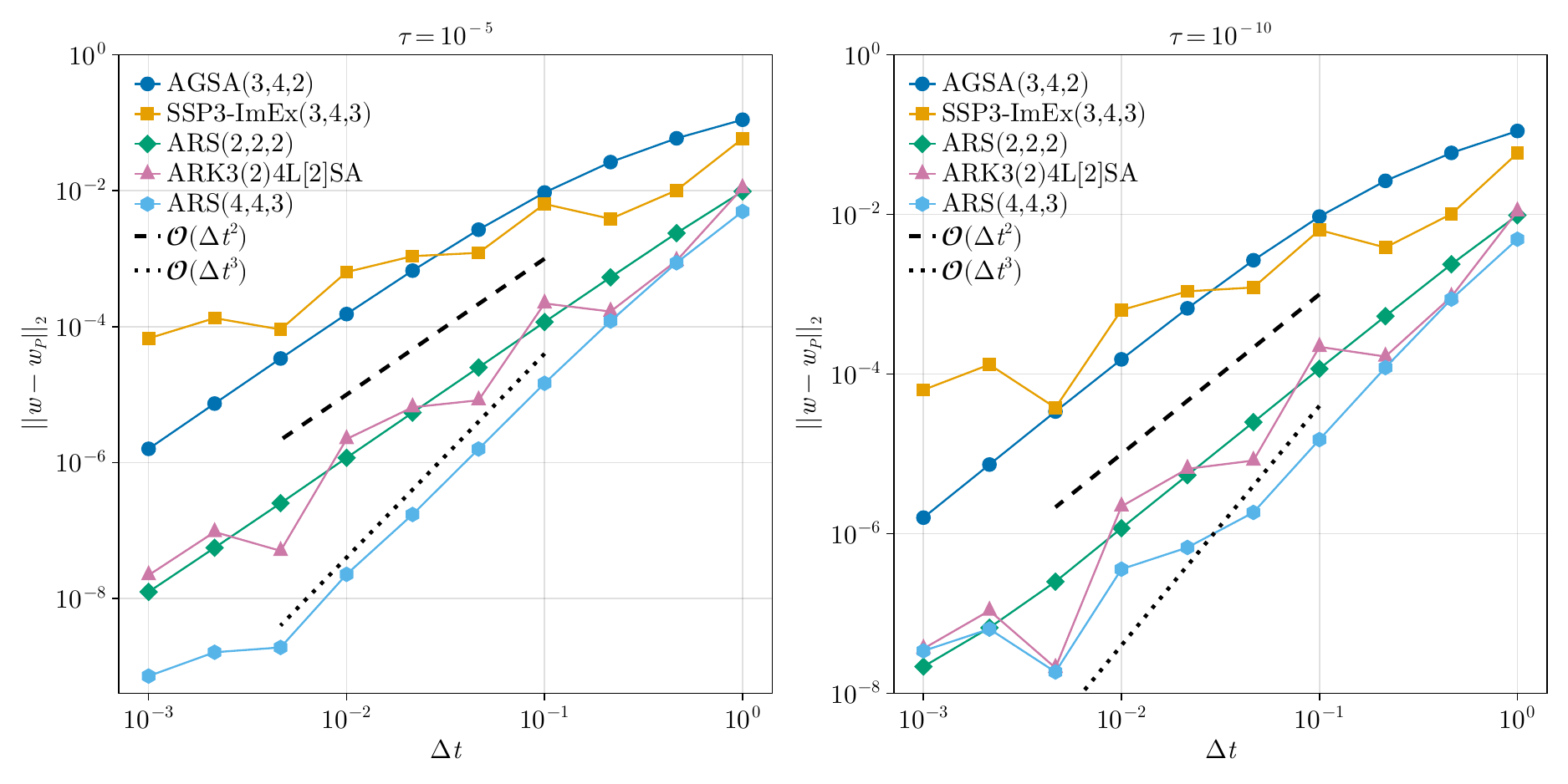}
    \end{subfigure}
    \caption{Error convergence for variables $\pmb{u}$ (top row), $\pmb{v}$ (middle row), and $\pmb{w}$ (bottom row) for two relaxation parameters.
    The reference solutions $\pmb{u_P}$, $\pmb{v_P}$, and $\pmb{w_P}$ are obtained using a Petviashvili-type method and a periodic
    first-derivative SBP operator. The methods AGSA(3,4,2), SSP3-ImEx(3,4,3), ARS(2,2,2), and ARS(4,4,3) exhibit the expected AA property
    for all components, whereas ARK3(2)4L[2]SA demonstrates the AA property for the $\pmb{u}$-component, which is beyond
    our guaranteed theoretical results.}
    \label{fig:AA_property}
\end{figure}

\subsection{Numerical tests of energy conservation} \label{sec:test-conservation}

It is well-known that integrating the KdV equation with an energy-conserving numerical scheme results in linear error
growth over time, whereas a non-conservative method leads to quadratic error growth
\cite{frutos1997accuracy}. This distinction makes conservative
methods superior for maintaining solution accuracy over long time intervals. The KdVH system has a modified
energy that is conserved, and as the relaxation parameter $\tau \to 0$, this energy converges to that of the KdV equation.
We aim to examine the effect of numerically conserving this modified energy on error propagation in the KdVH
system across different values of $\tau$. To achieve this, we use an energy-conserving spatial semi-discretization
combined with a relaxation Runge-Kutta approach, specifically designed to preserve one or more invariants of the system.
For error computation in the KdVH system, we use the analytical solution (or a highly accurate numerical solution)
of the KdV equation as the reference.

\begin{figure}[htbp]
    \centering
    \begin{subfigure}{0.45\textwidth}
        \centering
        \includegraphics[width=\textwidth]{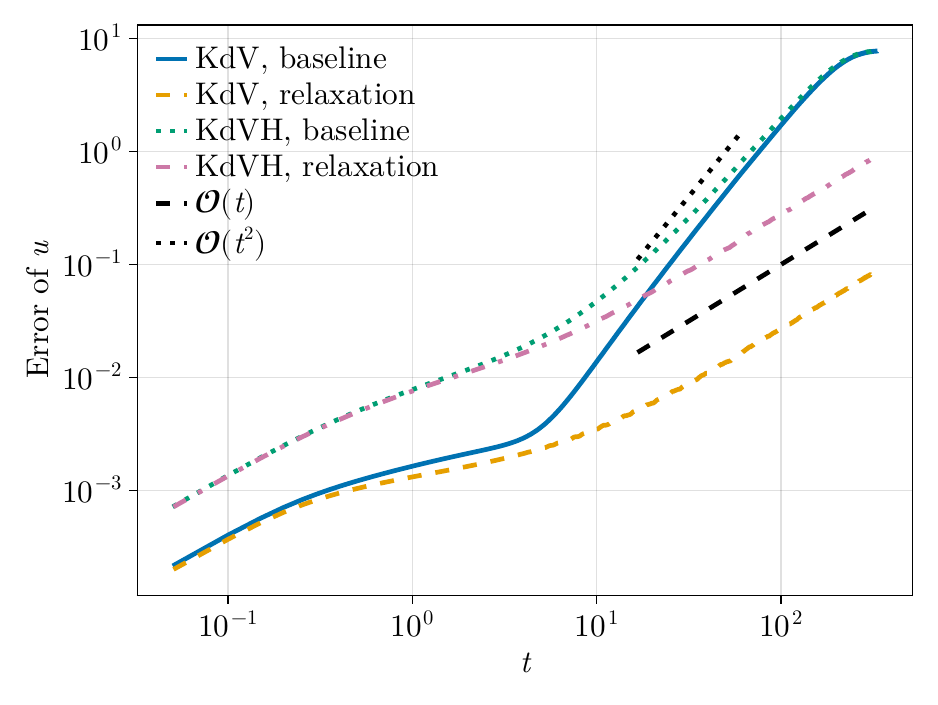}
        \caption{$\tau = 10^{-3}$}
    \end{subfigure}
    \begin{subfigure}{0.45\textwidth}
        \centering
        \includegraphics[width=\textwidth]{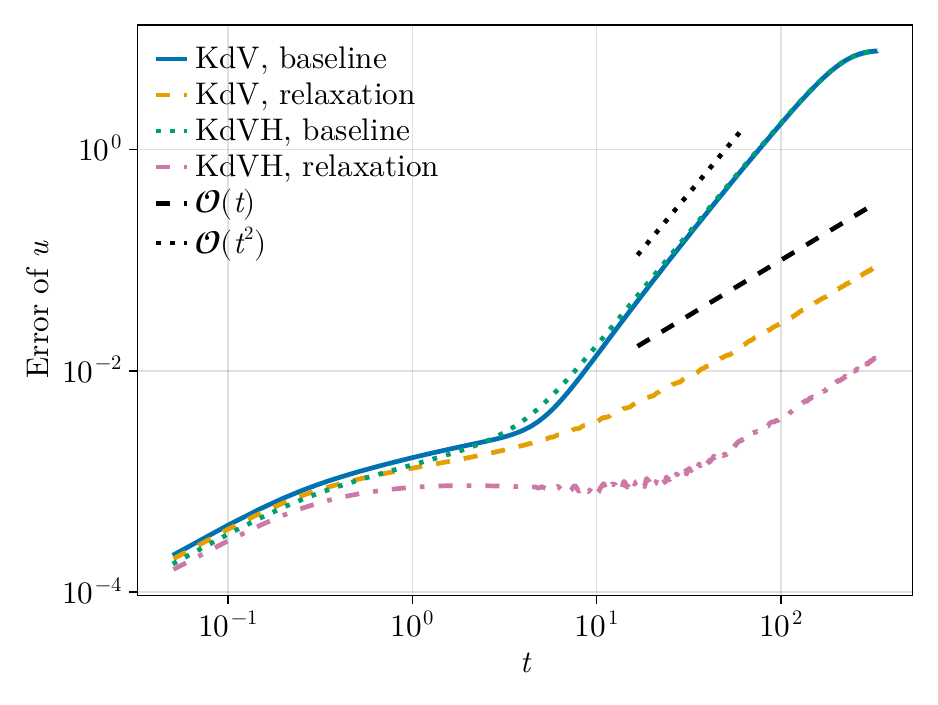}
        \caption{$\tau = 10^{-4}$}
    \end{subfigure}

    \begin{subfigure}{0.45\textwidth}
        \centering
        \includegraphics[width=\textwidth]{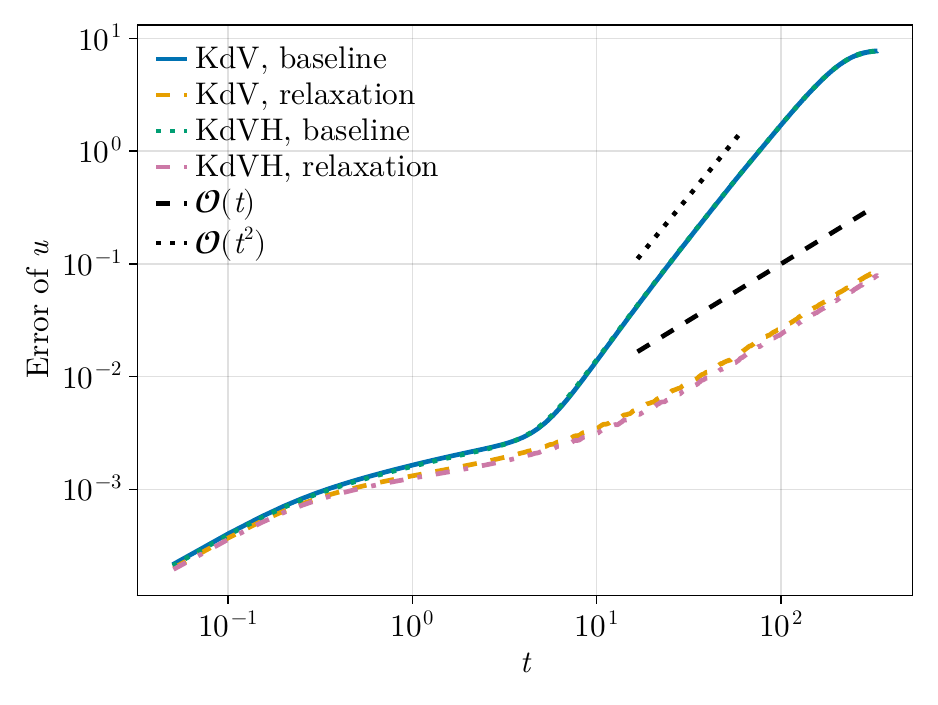}
        \caption{$\tau = 10^{-5}$}
    \end{subfigure}
    \begin{subfigure}{0.45\textwidth}
        \centering
        \includegraphics[width=\textwidth]{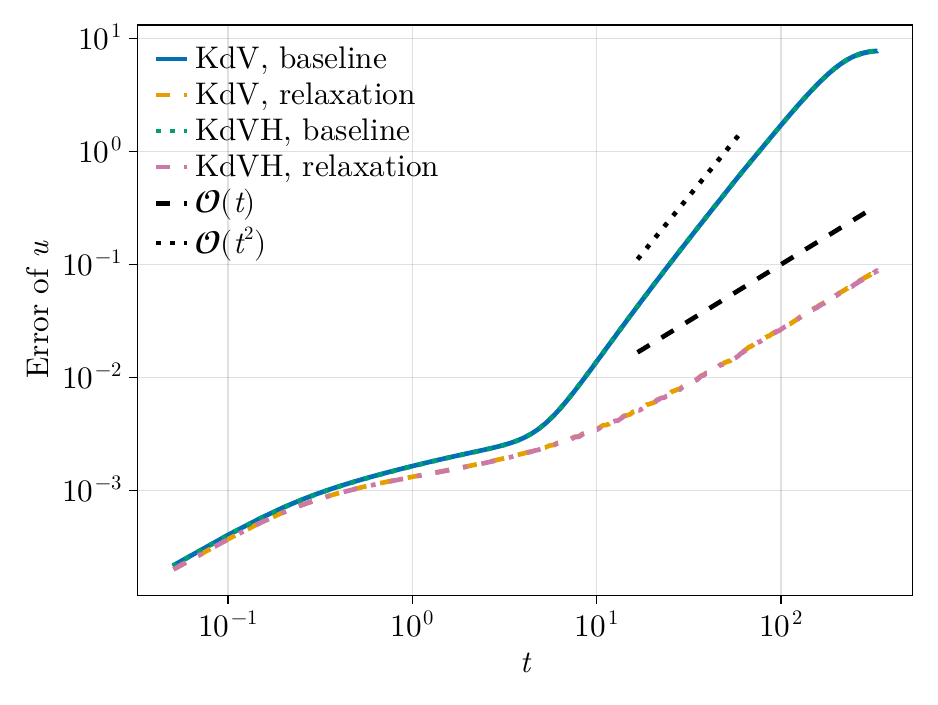}
        \caption{$\tau = 10^{-6}$}
    \end{subfigure}

    \caption{Error growth profiles for the KdVH system up to time $t=333.34$ for four values of the
     relaxation parameter $\tau$. Each subplot compares the numerical solutions with the analytical solitary wave solution of the
     KdV equation. For smaller $\tau$, linear error growth characteristic of conservative methods is observed.
     As $\tau$ decreases, the error growth curves for the KdVH system converge towards those of the KdV system.}
    \label{fig:Error_growth_KdVH_KdV}
\end{figure}

Considering the spatial domain $[-40, 40]$ with $2^8$ grid points and an 8th-order finite-difference operator for derivatives,
we integrate the energy-conserving semi-discretized KdVH system up to a final time of $333.34$ using the ARK method
ARS(4,4,3), with and without entropy relaxation. In each case we start the time stepping with a fixed time step $\Delta t=0.05$. Errors
at each time step are computed with respect to the analytical solution of the KdV equation, and
Figure \ref{fig:Error_growth_KdVH_KdV} presents the error growth profiles for four values of $\tau$.
Each panel in the figure includes reference error growth curves for the KdV equation, demonstrating
linear versus quadratic error growth for conservative versus non-conservative methods. For smaller $\tau$
values, we observe similar behavior in error growth for the KdVH system, while for larger $\tau$ values,
this behavior becomes less evident. Expected linear and quadratic error growth behaviors for conservative
and non-conservative methods, respectively, are observed only when $\tau$ is sufficiently small. Additionally,
as $\tau$ decreases, the error growth curves for the KdVH system converge toward those of the KdV system.

To examine the effects of energy conservation using different ImEx integrators for the KdVH system with different values
of the relaxation parameter $\tau$ with a more challenging solution, we consider the KdV equation and the hyperbolized system with
a 2-soliton solution given by
\begin{align}\label{Eq:2-solition_sol}
    u(x, t) = -\frac{12(\beta_1-\beta_2)\left(\beta_2 \csch^2(\xi_2)+\beta_1 \sech^2(\xi_1)\right)}
    {\left(\sqrt{2 \beta_1} \tanh(\xi_1)-\sqrt{2 \beta_2} \coth(\xi_2)\right)^2} \;.
\end{align}
where $\beta_1 = 0.5$, $\beta_2 = 1$, $\xi_1 = \frac{\sqrt{\beta_1}(x-2 \beta_1 t)}{\sqrt{2}}$, and $\xi_2 = \frac{\sqrt{\beta_2}(x-2 \beta_2 t)}{\sqrt{2}}$.
In this case, we consider the domain $[-60, 100]$ with $2^{10}$ grid points and employ an 8th-order energy-conserving finite-difference
operator for spatial semidiscretization. We integrate the energy-conserving semidiscretized systems for both the KdV
equation and the KdVH system with $\tau = 10^{-3}$ and $\tau = 10^{-5}$ from $t = -20$ to $t = 50$ using different
time-stepping methods, both with and without entropy relaxation. We initialize the simulation from a negative
time to capture the soliton interaction occurring at $t = 0$. Consequently, when plotting error growth over time on a
log-log scale, the time axis is shifted by the starting time. For time integration, we use AGSA(3,4,2), ARS(4,4,3),
and ARK4(3)6L[2]SA, with initial time steps $\Delta t = 0.02, 0.1$, and $0.5$, respectively.

Figure \ref{fig:2_soliton_Error_growth_KdVH_KdV} presents the error growth profiles for the three methods with two
values of $\tau$. At each time step, the error is computed relative to the analytical 2-soliton solution given
by \eqref{Eq:2-solition_sol}. For each method, the time-stepping approach with entropy relaxation exhibits improved
error growth compared to its corresponding baseline method. Notably, all methods display a dip in the error growth
profile during soliton interaction, consistent with previous observations \cite{biswas2023multiple,frutos1997accuracy}.
Furthermore, as $\tau$ decreases, the error growth curves progressively converge toward those of the KdV system.

\begin{figure}[htbp]
    \centering
    \begin{subfigure}{0.45\textwidth}
        \centering
        \includegraphics[width=\textwidth]{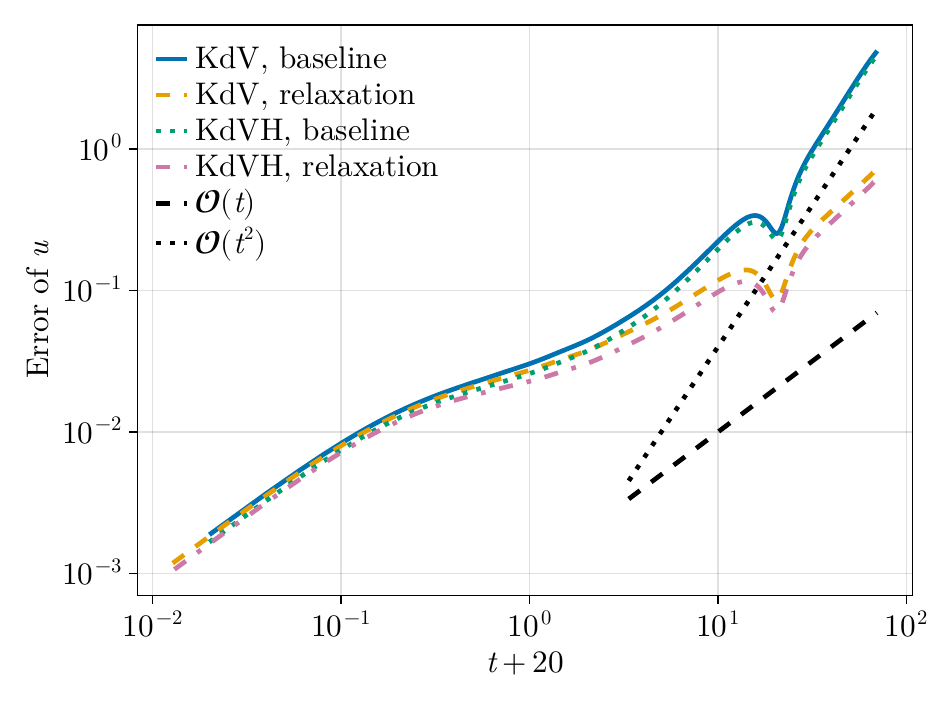}
        \caption{AGSA(3,4,2) and $\tau = 10^{-3}$}
    \end{subfigure}
    \begin{subfigure}{0.45\textwidth}
        \centering
        \includegraphics[width=\textwidth]{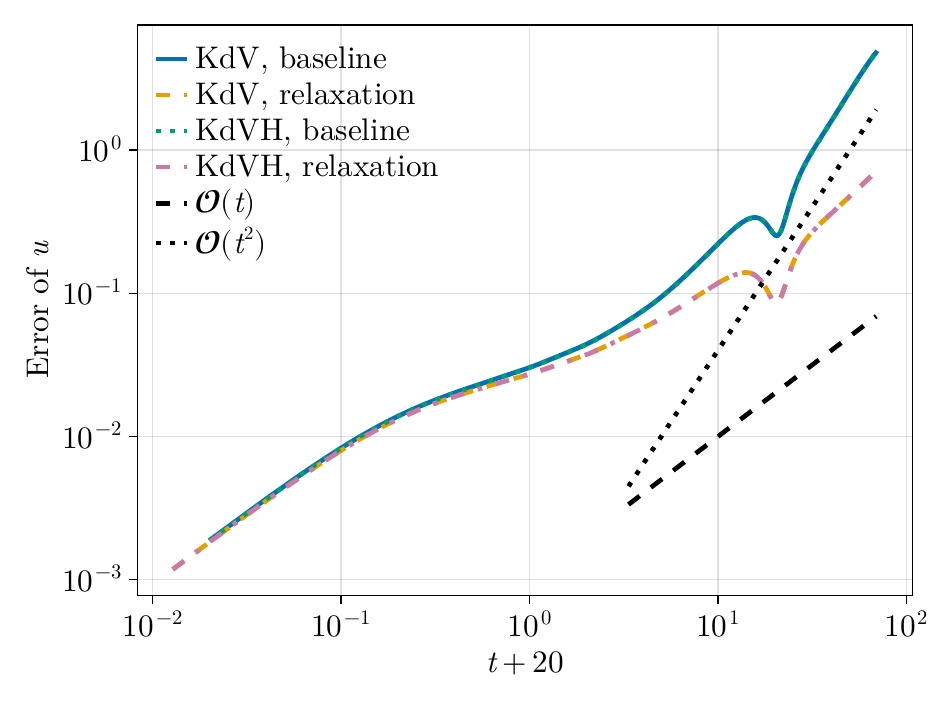}
        \caption{AGSA(3,4,2) and $\tau = 10^{-5}$}
    \end{subfigure}

    \begin{subfigure}{0.45\textwidth}
        \centering
        \includegraphics[width=\textwidth]{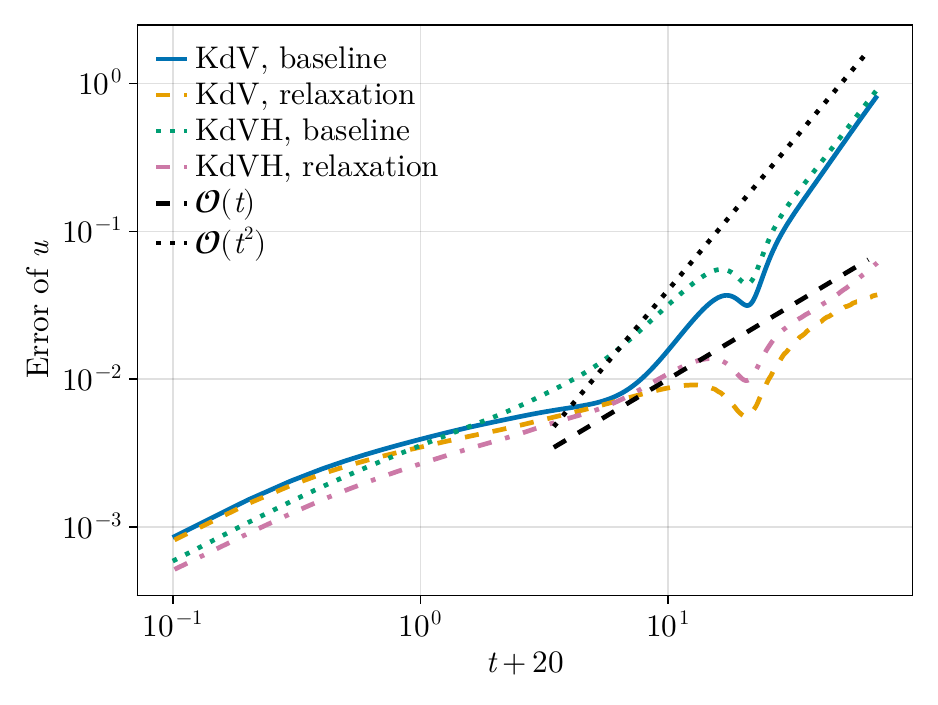}
        \caption{ARS(4,4,3) and $\tau = 10^{-3}$}
    \end{subfigure}
    \begin{subfigure}{0.45\textwidth}
        \centering
        \includegraphics[width=\textwidth]{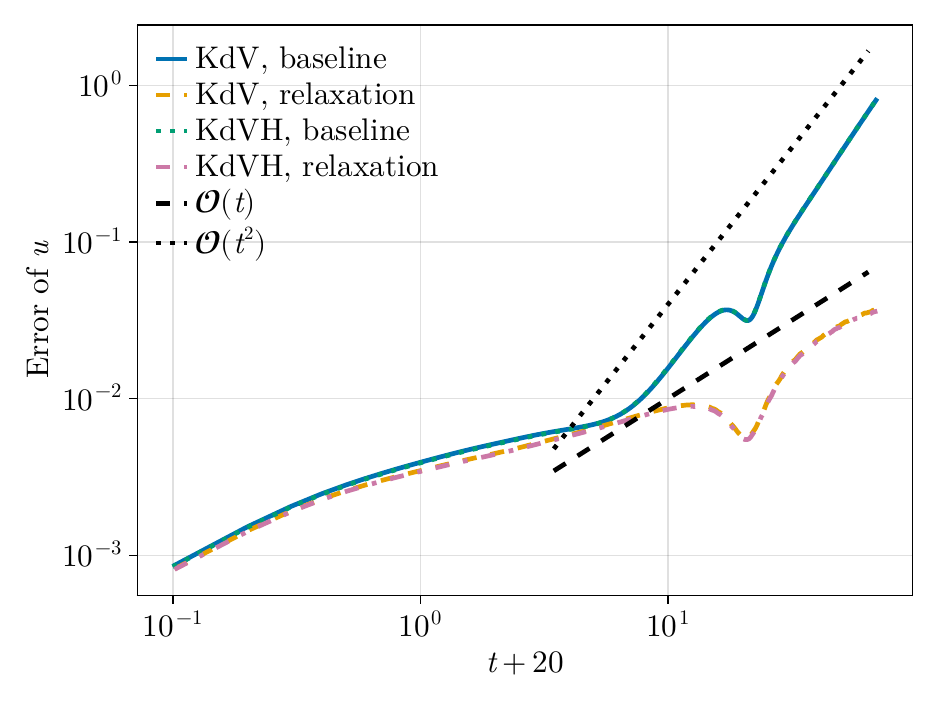}
        \caption{ARS(4,4,3) and $\tau = 10^{-5}$}
    \end{subfigure}

    \begin{subfigure}{0.45\textwidth}
        \centering
        \includegraphics[width=\textwidth]{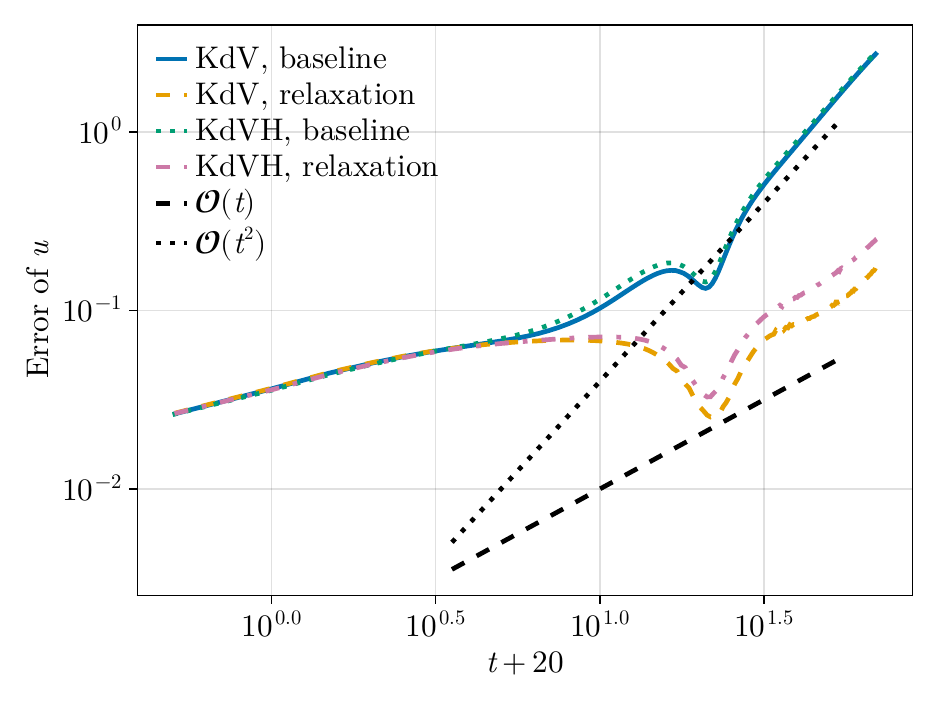}
        \caption{ARK4(3)6L[2]SA and $\tau = 10^{-3}$}
    \end{subfigure}
    \begin{subfigure}{0.45\textwidth}
        \centering
        \includegraphics[width=\textwidth]{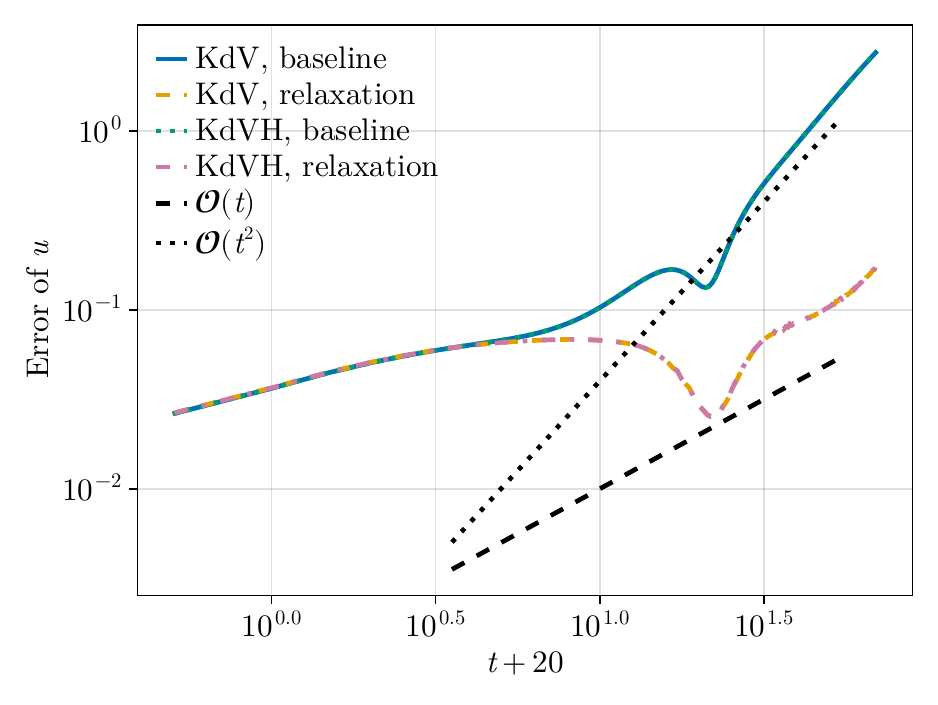}
        \caption{ARK4(3)6L[2]SA and $\tau = 10^{-5}$}
    \end{subfigure}

    \caption{Error growth profiles for the KdVH system with two values of the relaxation parameter $\tau$,
    computed from $t = -20$ to $t = 50$ using three different ImEx methods. Each subplot compares the numerical
    solutions with the analytical 2-soliton solution of the KdV equation. ImEx methods with entropy relaxation exhibit improved
    error growth compared to their corresponding baseline methods. Furthermore, as $\tau$ decreases, the error growth
    curves for the KdVH system converge toward those of the KdV system.}
    \label{fig:2_soliton_Error_growth_KdVH_KdV}
\end{figure}

So far, we have examined the error growth over time by measuring the solution error of the KdVH system with
respect to the analytical solution of the KdV equation. Now, given a particular value of $\tau$, we focus on the
KdVH system itself and compare its numerical solution with the numerically obtained exact solution of the
KdVH system, computed using the Petviashvili method. The Petviashvili method is applied over the domain
$[-40,40]$ with $2^{10}$ grid points to obtain numerically exact solitary wave solutions for the KdVH
system with $\tau = 10^{-2}$. For the numerical solution, we semi-discretize the KdVH using an $8$th-order
upwind finite difference approximation with $2^8$ grid points, resulting in a modified energy-preserving semidiscretization.
Figure~\ref{fig:Error_growth_KdVH(tau)} shows the error growth for two different ImEx-RK methods: SSP2-IMEX(2,2,2),
a type I method, and ARS(2,2,2), a type II method. All time integrations are initialized with a time step of
$\Delta t = 0.05$. Both methods exhibit the expected linear and quadratic error growth over time when integrated
with and without entropy relaxation.

\begin{figure}[htbp]
    \centering
    \begin{subfigure}{0.45\textwidth}
        \centering
        \includegraphics[width=\textwidth]{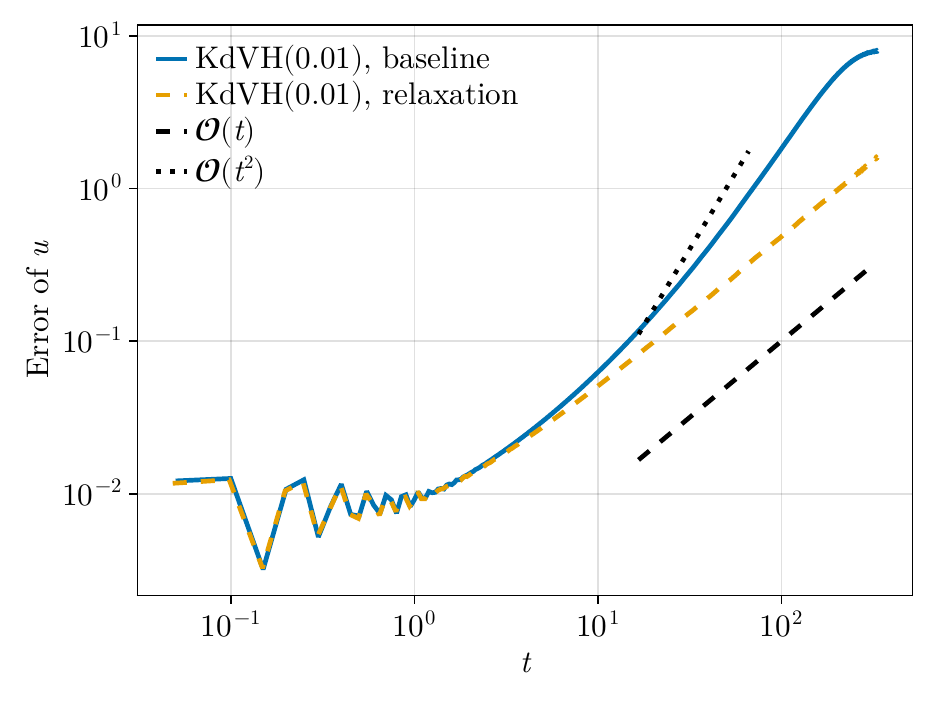}
        \caption{Error growth by SSP2-IMEX(2,2,2)}
    \end{subfigure}
    \begin{subfigure}{0.45\textwidth}
        \centering
        \includegraphics[width=\textwidth]{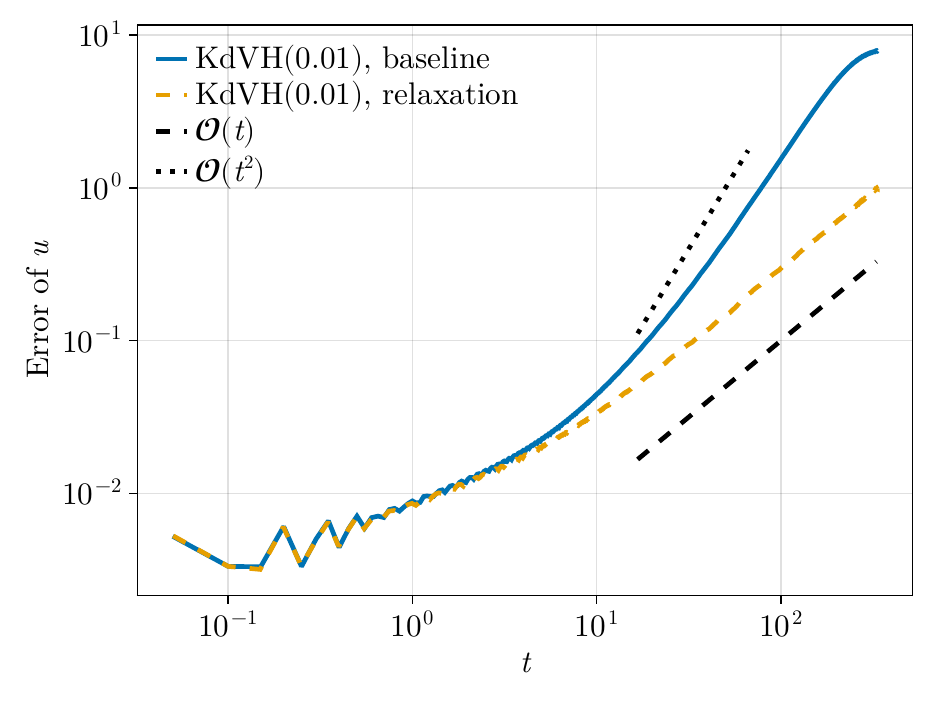}
        \caption{Error growth by ARS(2,2,2)}
    \end{subfigure}

    \caption{Error growth profiles for the KdVH system with $\tau = 10^{-2}$ up to time $t=333.34$. The
     left panel shows the error growth by SSP2-IMEX(2,2,2) with and without entropy relaxation, while the right panel shows
     the error growth by ARS(2,2,2). In each case, the numerical solution is compared with the numerically obtained
     exact solution of the KdVH system.}
    \label{fig:Error_growth_KdVH(tau)}
\end{figure}

\section{Conclusions}

Given the increasing interest in hyperbolic approximations to dispersive nonlinear
wave equations, it is of great interest to understand the dynamics of these
hyperbolic models and develop structure-preserving numerical discretizations
for them.  Here we have carried out this work in relation to the hyperbolized KdV
system.

One of our principal findings is that the dynamics of the KdVH system, studied here
primarily in terms of traveling waves, is in a sense richer than that of the original
KdV equation, and includes additional classes of solitary and periodic waves
including some with lower regularity.  A more extensive investigation of these solutions,
along the lines of \cite{lenells2005traveling}, would be very interesting.
Furthermore, the resemblance of \eqref{DP-like} and its solutions to higher-order water wave models
suggests that there may be a deeper connection between KdVH and such models.

The asymptotic-preserving discretizations developed herein provide essential
guarantees for numerical solutions of KdVH, since in practice one uses a finite
value of the relaxation parameter $\tau$.
Numerical results of asymptotic preservation and asymptotic accuracy presented
herein support our theoretical results, with some ImEx methods by Kennedy and Carpenter
of type II \cite{kennedy2003additive} producing results that outperform theoretical predictions,
suggesting the need for further investigation to fully understand this behavior.
Energy preservation ensures that solutions of KdVH remain closer to those of KdV for longer times.
It would be of interest to investigate the existence of higher-order modified invariants
of KdVH and their numerical preservation.

\section*{Acknowledgments}

AB and DK were supported by the
King Abdullah University of Science and Technology (KAUST).
HR was supported by the Deutsche Forschungsgemeinschaft
(DFG, German Research Foundation, project number 513301895)
and the Daimler und Benz Stiftung (Daimler and Benz foundation,
project number 32-10/22).

\bibliographystyle{plain}
\bibliography{refs}

\appendix

\section{Type I ImEx methods}
\label{app:Tyep_I_ImEx_Methods}
\subsection{SSP2-ImEx(2,2,2)}
\begin{table}[H]
    \centering
    \begin{tabular}{c|cc}
        $0$ & $0$ & $0$ \\
        $1$ & $1$ & $0$ \\
        \hline
        & $1/2$ & $1/2$ \\
    \end{tabular}
    \quad
    \begin{tabular}{c|cc}
        $\gamma$  & $\gamma$  & $0$        \\
        $1 - \gamma$ & $1 - 2\gamma$ & $\gamma$ \\
        \hline
        & $1/2$ & $1/2$ \\
    \end{tabular} \;, with $\gamma = 1-\frac{1}{\sqrt{2}}$.
    \caption{Tableau for the 2nd-order L-stable type I ImEx-RK method: the explicit part is
    not FSAL, the implicit part is not SA, hence not GSA.}
    \label{tab:SSP2ImEx222}
\end{table}

\subsection{SSP2-ImEx(3,3,2)}
\begin{table}[H]
    \centering
    \begin{tabular}{c|ccc}
        $0$ & $0$ & $0$ & $0$ \\
        $1/2$ & $1/2$ & $0$ & $0$ \\
        $1$ & $1/2$ & $1/2$ & $0$ \\
        \hline
        & $1/3$ & $1/3$ & $1/3$ \\
    \end{tabular}
    \quad
    \begin{tabular}{c|ccc}
        $1/4$ & $1/4$ & $0$ & $0$ \\
        $1/4$ & $0$ & $1/4$ & $0$ \\
        $1$ & $1/3$ & $1/3$ & $1/3$ \\
        \hline
        & $1/3$ & $1/3$ & $1/3$ \\
    \end{tabular} \;.
    \caption{Tableau for the 2nd-order L-stable type I ImEx-RK method: the explicit part is not FSAL,
    the implicit part is SA, hence not GSA.}
    \label{tab:SSP2ImEx332}
\end{table}

\subsection{AGSA(3,4,2)}
\begin{table}[H]
\centering
\begin{tabular}{c|cccc}
    $0$ & $0$ & $0$ & $0$ & $0$ \\
    $\tilde{c}_2$ & $\tilde{c}_2$ & $0$ & $0$ & $0$ \\
    $\tilde{c}_3$ & $\tilde{a}_{31}$ & $\tilde{a}_{32}$ & $0$ & $0$ \\
    $1$ & $\tilde{b}_1$ & $\tilde{b}_2$ & $\tilde{b}_3$ & $0$ \\
    \hline
    & $\tilde{b}_1$ & $\tilde{b}_2$ & $\tilde{b}_3$ & $0$ \\
\end{tabular}
\quad
\begin{tabular}{c|cccc}
    $c_1$ & $c_1$ & $0$ & $0$ & $0$ \\
    $c_2$ & $a_{21}$ & $a_{22}$ & $0$ & $0$ \\
    $c_3$ & $a_{31}$ & $a_{32}$ & $a_{33}$ & $0$ \\
    $1$ & $b_1$ & $b_2$ & $b_3$ & $\gamma$ \\
    \hline
     & $b_1$ & $b_2$ & $b_3$ & $\gamma$ \\
\end{tabular} \;.
\caption{Tableau for the 2nd-order type I ImEx-RK method: the explicit part is FSAL, the implicit part is SA, hence GSA.}
\label{tab:AGSA342}
\end{table}

The coefficients in the table are
$c_2 = \tilde{a}_{21} = \frac{-139833537}{38613965}$,
$c_1 = \frac{168999711}{74248304}$,
$\tilde{a}_{31} = \frac{85870407}{49798258}$,
$\gamma = a_{22} = \frac{202439144}{118586105}$,
$\tilde{a}_{32} = \frac{-121251843}{1756367063}$,
$a_{33} = \frac{12015439}{183058594}$,
$\tilde{b}_2 = \frac{1}{6}$,
$\tilde{b}_3 = \frac{2}{3}$,
$a_{31} = \frac{-6418119}{169001713}$,
$a_{21} = \frac{44004295}{24775207}$,
$\tilde{a}_{32} = \frac{-748951821}{1043823139}$,
$b_2 = \frac{1}{3}$,
$b_3 = 0$,
$\tilde{b}_1 = 1 - \tilde{b}_2 - \tilde{b}_3$,
and
$b_1 = 1 - \gamma - b_2 - b_3$.

\subsection{SSP3-ImEx(3,4,3)}
\begin{table}[H]
\centering
\begin{tabular}{c|cccc}
    $0$ & $0$ & $0$ & $0$ & $0$ \\
    $0$ & $0$ & $0$ & $0$ & $0$ \\
    $1$ & $0$ & $1$ & $0$ & $0$ \\
    $1/2$ & $0$ & $1/4$ & $1/4$ & $0$\\
    \hline
    & $0$ & $1/6$ & $1/6$ & $2/3$ \\
\end{tabular}
\quad
\begin{tabular}{c|cccc}
    $\alpha$ & $\alpha$ & $0$ & $0$ & $0$ \\
    $0$ & $-\alpha$ & $\alpha$ & $0$ & $0$ \\
    $1$ & $0$ & $1-\alpha$ & $\alpha$ & $0$ \\
    $1/2$ & $\beta$ & $\eta$ & $1/2-\beta-\eta-\alpha$ & $\alpha$ \\
    \hline
    & $0$ & $1/6$ & $1/6$ & $2/3$ \\
\end{tabular}\;,
\caption{Tableau for the 3rd-order L-stable type I ImEx-RK method: the explicit part is not FSAL, the
implicit part is not SA, hence not GSA.}
\label{tab:SSP3ImEx343}
\end{table}
where $\alpha = 0.241694260788$, $\beta = 0.0604235651970$,
and $\eta = 0.12915286960590$.

\section{Type II ImEx methods}
\label{app:Tyep_II_ImEx_Methods}
\subsection{ARS(2,2,2)}
\begin{table}[H]
    \centering
        \begin{tabular}{c|ccc}
        $0$ & $0$ & $0$ & $0$  \\
        $\gamma$ & $\gamma$ & $0$ & $0$  \\
        $1$ & $\delta$ & $1-\delta$ & $0$  \\
        \hline
        & $\delta$ & $1-\delta$ & $0$ \\
    \end{tabular}
    \quad
    \begin{tabular}{c|ccc}
        $0$ & $0$ & $0$ & $0$ \\
        $\gamma$  & $0$ & $\gamma$ & $0$ \\
        $1$ & $0$ & $1-\gamma$ & $\gamma$ \\
        \hline
        & $0$ & $1-\gamma$ & $\gamma$ \\
    \end{tabular}
    \caption{Tableau for the ARS(2,2,2) method: the explicit part is FSAL, the implicit part is  SA, hence GSA, with coefficients $\gamma = 1 - \frac{1}{\sqrt{2}}$ and $\delta = 1 - \frac{1}{2\gamma}$.}
    \label{tab:ARS222}
\end{table}

\subsection{ARS(4,4,3)}
\begin{table}[H]
    \centering
    \begin{tabular}{c|ccccc}
        $0$ & $0$ & $0$ & $0$ & $0$ & $0$  \\
        $1/2$ & $1/2$ & $0$ & $0$ & $0$ & $0$   \\
        $2/3$ & $11/18$ & $1/18$ & $0$ & $0$ & $0$   \\
        $1/2$ & $5/6$ & $-5/6$ & $1/2$ & $0$ & $0$   \\
        $1$ & $1/4$ & $7/4$ & $3/4$ & $-7/4$ & $0$    \\
        \hline
        & $1/4$ & $7/4$ & $3/4$ & $-7/4$ & $0$   \\
    \end{tabular}
    \quad
    \begin{tabular}{c|ccccc}
        $0$ & $0$ & $0$ & $0$ & $0$ & $0$ \\
        $1/2$ & $0$  & $1/2$ & $0$ & $0$ & $0$ \\
        $2/3$ & $0$  & $1/6$ & $1/2$ & $0$ & $0$ \\
        $1/2$ & $0$  & $-1/2$ & $1/2$ & $1/2$ & $0$ \\
        $1$  & $0$  & $3/2$ & $-3/2$ & $1/2$ & $1/2$  \\
        \hline
        & $0$  & $3/2$ & $-3/2$ & $1/2$ & $1/2$ \\
    \end{tabular}
    \caption{Tableau for the ARS(4,4,3) method: the explicit part is FSAL, the implicit part is SA, hence GSA.}
    \label{tab:ARS443}
\end{table}

\subsection{ARK3(2)4L[2]SA}
\begin{table}[H]
    \centering
    \renewcommand{\arraystretch}{1.5}
    \resizebox{\textwidth}{!}{%
        \scriptsize
        \begin{tabular}{l|l l l l}
            0 &  0 & 0 & 0 &  0 \\
            $\frac{1767732205903}{2027836641118}$ & $\frac{1767732205903}{2027836641118}$ & 0 & 0 & 0 \\
            $\frac{3}{5}$ &  $\frac{5535828885825}{10492691773637}$ &  $\frac{788022342437}{10882634858940}$ &  0 &  0 \\
            1 &  $\frac{6485989280629}{16251701735622}$ &  $\frac{-4246266847089}{9704473918619}$ &  $\frac{10755448449292}{10357097424841}$ &  0 \\
            \hline
              &   $\frac{1471266399579}{7840856788654}$   &        $\frac{-4482444167858}{7529755066697}$    &       $\frac{11266239266428}{11593286722821}$      &     $\frac{1767732205903}{4055673282236}$  \\
        \end{tabular}%
    } \\
    \vspace{0.2cm}
    \text{ARK3(2)4L[2]SA–ERK}  \\
    \vspace{0.2cm}
    \resizebox{\textwidth}{!}{%
        \scriptsize
        \begin{tabular}{l|l l l l}
            0 &  0 & 0 & 0 &  0 \\
            $\frac{1767732205903}{2027836641118}$ & $\frac{1767732205903}{4055673282236}$ & $\frac{1767732205903}{4055673282236}$ & 0 & 0 \\
            $\frac{3}{5}$ & $\frac{2746238789719}{10658868560708}$ & $\frac{-640167445237}{6845629431997}$ & $\frac{1767732205903}{4055673282236}$ & 0 \\
            1 & $\frac{1471266399579}{7840856788654}$ & $\frac{-4482444167858}{7529755066697}$ & $\frac{11266239266428}{11593286722821}$ & $\frac{1767732205903}{4055673282236}$\\
            \hline
            &   $\frac{1471266399579}{7840856788654}$   &        $\frac{-4482444167858}{7529755066697}$    &       $\frac{11266239266428}{11593286722821}$      &     $\frac{1767732205903}{4055673282236}$  \\
        \end{tabular}%
    } \\
    \vspace{0.2cm}
    \text{ARK3(2)4L[2]SA–ESDIRK} \\
    \caption{Tableau for the ARK3(2)4L[2]SA method: the explicit part is not FSAL, the implicit part is SA, hence not GSA.}
    \label{tab:ARK324L2SA}
\end{table}

\subsection{ARK4(3)6L[2]SA}
\begin{table}[H]
    \centering
    \renewcommand{\arraystretch}{1.5}
    \resizebox{\textwidth}{!}{%
        \scriptsize
    	\begin{tabular}{l|l l l l l l}
             0 &  0 & 0 & 0 &  0 & 0 & 0\\
             \vspace{0.15cm}
             $\frac{1}{2}$ & $\frac{1}{2}$ & 0 & 0 &  0 & 0 & 0\\
            \vspace{0.15cm}
            $\frac{83}{250}$ & $\frac{13861}{62500}$ & $\frac{6889}{62500}$ & 0 & 0 & 0 & 0 \\
            \vspace{0.15cm}
            $\frac{31}{50}$ & $\frac{-116923316275}{2393684061468}$ & $\frac{-2731218467317}{15368042101831}$ & $\frac{9408046702089}{11113171139209}$ & 0 & 0 & 0  \\
            \vspace{0.15cm}
            $\frac{17}{20}$ & $\frac{-451086348788}{2902428689909}$ & $\frac{-2682348792572}{7519795681897}$  & $\frac{12662868775082}{11960479115383}$  & $\frac{3355817975965}{11060851509271}$ & 0 & 0 \\
            \vspace{0.15cm}
            1 & $\frac{647845179188}{3216320057751}$ & $\frac{73281519250}{8382639484533}$ & $\frac{552539513391}{3454668386233}$ & $\frac{3354512671639}{8306763924573}$ & $\frac{4040}{17871}$ & 0 \\
             \hline
             \vspace{0.15cm}
              &   $\frac{82889}{524892}$   &   0     &  $\frac{15625}{83664}$     &    $\frac{69875}{102672}$  &    $\frac{-2260}{8211}$ &    $\frac{1}{4}$  \\
        \end{tabular}%
    	} \\
    \vspace{0.2cm}
    \text{ARK4(3)6L[2]SA–ERK}  \\
    \vspace{0.2cm}
    \resizebox{\textwidth}{!}{%
        \scriptsize
    	\begin{tabular}{l|l l l l l l}
             0 &  0 & 0 & 0 &  0 & 0 & 0\\
             \vspace{0.15cm}
             $\frac{1}{2}$ & $\frac{1}{4}$ & $\frac{1}{4}$ & 0 &  0 & 0 & 0\\
            \vspace{0.15cm}
            $\frac{83}{250}$ & $\frac{8611}{62500}$ & $\frac{-1743}{31250}$ & $\frac{1}{4}$  & 0 & 0 & 0 \\
            \vspace{0.15cm}
            $\frac{31}{50}$ & $\frac{5012029}{34652500}$ & $\frac{-654441}{2922500}$ & $\frac{174375}{388108}$ & $\frac{1}{4}$ & 0 & 0  \\
            \vspace{0.15cm}
            $\frac{17}{20}$ & $\frac{15267082809}{155376265600}$ & $\frac{-71443401}{120774400}$ & $\frac{730878875}{902184768}$ & $\frac{2285395}{8070912}$ & $\frac{1}{4}$ & 0 \\
            \vspace{0.15cm}
            1 & $\frac{82889}{524892}$ & 0 & $\frac{15625}{83664}$ & $\frac{69875}{102672}$ & $\frac{-2260}{8211}$ & $\frac{1}{4}$\\
            \hline
             \vspace{0.15cm}
            &   $\frac{82889}{524892}$   &   0     &  $\frac{15625}{83664}$     &    $\frac{69875}{102672}$  &    $\frac{-2260}{8211}$ &    $\frac{1}{4}$  \\
        \end{tabular}%
    	} \\
    \vspace{0.2cm}
    \text{ARK4(3)6L[2]SA–ESDIRK} \\
    \caption{Tableau for the ARK4(3)6L[2]SA method: the explicit part is not FSAL, the implicit part is SA, hence not GSA.}
    \label{tab:ARK436L2SA}
\end{table}

\end{document}